\documentclass[11pt, reqno]{amsart}
 \usepackage[english]{babel}
 \usepackage[utf8]{inputenc} 
\usepackage[T1]{fontenc} 
   \usepackage{textcomp}  
\usepackage[margin=1.3 in]{geometry}
 \usepackage{mathtools,mathrsfs}
 
\mathtoolsset{showonlyrefs}
\usepackage{graphicx,enumerate,bbm}
\usepackage{hyperref,color}
\usepackage[dvipsnames]{xcolor}
\hypersetup{
	colorlinks=true,
	pdfpagemode=UseNone,
    citecolor=OliveGreen,
    linkcolor=RoyalBlue,
    urlcolor=black,
	pdfstartview=FitW
}
\usepackage{tikz}  
\usepackage{comment}
\usepackage{appendix}

 
\usepackage[tt=false]{libertinus}

\usepackage[libertine,smallerops,varbb]{newtxmath}
\usepackage{microtype} 

\newcommand{\ind}[1]{\mathbf{1}_{\left\{ #1 \right\}}}

\DeclareMathOperator*{\argmin}{arg\,min}
\theoremstyle{plain}
\newtheorem{theorem}{Theorem}[section]
\newtheorem{proposition}[theorem]{Proposition}
\newtheorem*{proposition*}{Proposition}
\newtheorem*{theorem*}{Theorem}
\newtheorem{lemma}[theorem]{Lemma}
\newtheorem*{lemma*}{Lemma}
\newtheorem{corollary}[theorem]{Corollary}
\theoremstyle{definition}

\newtheorem{remark}[theorem]{Remark}

\newcommand*{\dif}{\ensuremath{\mathop{}\!\mathrm{d}}}

\def\sdif{\operatorname{\triangle}}

\def\R{\mathbb{R}}

\def\E{\mathbf{E}}
\def\P{\mathbf{P}} 
\def\Q{\mathbf{Q}}

\def\s{\mathfrak{s}}
\def\L{\mathscr{L}}
  
\numberwithin{equation}{section}

\renewcommand{\bar}[1]{\mkern 1.5mu\overline{\mkern-1.5mu#1\mkern-1.5mu}\mkern 1.5mu}



\marginparsep = 0.2 cm
\marginparwidth = 2.6 cm



\title[BBM conditioned on large level sets]{Branching brownian motion conditioned on large level sets}

\author{Xinxin Chen \and Heng Ma} 
 
\address[Xinxin Chen]
{Beijing Normal University, School of Mathematical Sciences, China   
}
\email{xinxin.chen(at)bnu.edu.cn}
 
\address[Heng Ma]
{Peking University, School of Mathematical Sciences, China
}
\email{hengmamath(at)gmail.com}
\urladdr{\url{hengmamath.github.io}}


\begin{document}

 \begin{abstract}
We study the precise large deviation probabilities for the sizes of intermediate level sets in  branching Brownian motion (BBM). Our  conclusions improve a result of A\"{i}dekon, Hu and Shi in [J. Math. Sci. \textbf{238}(2019)]. Additionally, we analyze the typical behaviors of BBM conditioned on large level sets. Our approach relies on the connections between intermediate level sets, additive martingale limits of BBM, and the global minimum of linearly transformed BBMs.
\end{abstract}

\maketitle

\section{Introduction}
We consider a standard binary branching Brownian motion (BBM) on the real line, which is a particle system constructed inductively as follows. Initially, a single particle performs standard Brownian motion. After an exponentially distributed random time with mean one that is independent of its motion,
the particle undergoes a branching event: it dies and produces two offspring at its current position.  Each offspring then independently continues the same process, performing Brownian motion and eventually branching in the same manner. We denote by $\mathcal{N}_{t}$ the sets of all particles alive at time $t$, by $X_{t}(u)$ the position of $u \in \mathcal{N}_{t}$, and by $\P$ the corresponding probability measure.

To characterize how this cloud of particles is distributed in space, one of the most fundamental aspects is to determine the sizes of the level sets, defined as follows.  For each $y \geq 0$,   the $y$-level set of the BBM at time $t$ consists of all particles alive at time $t$ that are positioned above the level $y$. Let $\L_t (y)$ denote  the size of the $y$-level set:
\begin{equation}
  \L_t (y):=   \# \{   u \in \mathcal{N}_{t}:   X_{t}(u) \geq y \}  .
\end{equation}  
For $x\geq0$, the typical asymptotic behavior of the sizes of $xt$-level sets  is well understood: 
\begin{quote}
 Almost surely, for sufficiently large time $t$, the $\sqrt{2} t$-level set at large time $t$ is empty, while  $\L_{t} (x t) $ is comparable to its expectation for each $x \in [0,\sqrt{2})$.\footnote{The many-to-one formula and Gaussian tail inequality yield  that  $ \mathbf{E}\left[\L_{t} (x t) \right]= \frac{1+o(1)}{x \sqrt{2 \pi t}} \exp\{ (1- \frac{x^2}{2})t\}$.} 
\end{quote}
Precisely, it was proved  by Hu and Shi \cite{HS09}  for branching random walks (BRWs) and by   Robert \cite{Robert13} for BBMs that the maximal position  $M_{t} := \max_{u \in \mathcal{N}_{t}}X_{t}(u)$ satisfies
\begin{equation}\label{eq-HS-maximum}
  \limsup_{t \to \infty} \frac{\sqrt{2}t-M_{t} }{\ln t} = \frac{3}{2\sqrt{2}}  \text{ a.s. \, and }\, \liminf_{t \to \infty} \frac{\sqrt{2}t- M_{t}}{\ln t} = \frac{1}{2\sqrt{2}} \text{ a.s. }
\end{equation} 
This implies the emptiness of $\sqrt{2}t$-level set.  Additionally, Biggins \cite{Biggins79} for BRWs and 
 Glenz,  Kistler, Schmidt for BBMs \cite{GKS18} showed that for $x \in [0,\sqrt{2})$, 
\begin{equation}\label{eq-typical-GKS}
  \frac{\L_{t} (x t) }{\mathbf{E}\left[\L_{t} (x t) \right]} \xrightarrow[t \to \infty]{\mathrm{a.s.}} W_{\infty}(x) , 
\end{equation}
where $W_{\infty}(x)$ is the limit of the additive martingale $ W_{t}(x):=\sum_{u \in \mathcal{N}_{t}} e^{x X_{t}(u)- (1+\frac{x^2}{2}) t} $.
The distribution of particles in the BBM around the level $x t$ (with $x \in [0,\sqrt{2})$) 
was also analyzed in  \cite{Biggins79}. Specifically, it was shown that 
 $ \sum_{u \in \mathcal{N}_{t}} \frac{f(X_{t}(u)-x t)    }{\mathbf{E}\left[\L_{t} (x t) \right]} \overset{t \to \infty}{\rightarrow }  W_{\infty}(x) \int f(h) e^{-x h} \dif h$ for any function $f$ such that $h \mapsto f(h)e^{-xh}$ is direct Riemann integrable.
 
Once the typical behavior is understood, one may be interested in exploring the atypical behavior of the level sets. A\"{i}dekon, Hu and Shi \cite{AHS19} investigated the upper   deviation probability of the level sets, for both BBMs and the Gaussian free field (GFF). They demonstrated that  for $x > 0$ and $ (1-\frac{x^2}{2} )_{+}<a<1$, the following holds:
   \begin{equation}
\lim_{t \to \infty}  \frac{1}{t} \ln \mathbf{P}\left(\L_{t} (x t)  \geq e^{a t}\right) = -I(x,a), 
\end{equation} 
where the rate function $I(\cdot,\cdot)$ is defined by
\begin{equation}
  I(x,a) := \frac{x^2}{2(1-a)} - 1 . 
\end{equation}   
In other words, $\mathbf{P}\left(\L_{t} (x t)  \geq e^{a t}\right) = e^{-   I(x,a) t + o(t) }$.  This $o(t)$ term hides important information about the evolution of the BBM conditioned on the level set  being large. For example, \cite[\S 3.1]{AHS19} showed that the strategy $\max_{u \in \mathcal{N}_{pt}} X_{pt}(u) \geq bpt$ (where $p$ and $b$ are defined in \eqref{eq-def-theta-p-b}) can induce this atypical event. However, determining whether this strategy is optimal (i.e., whether this is a typical event for the conditioned process), hinges on the precise order of the $o(t)$ term. Here comes our first question:
 \begin{quote}
   \textbf{Question 1:} What is the exact decaying order of $\mathbf{P}\left(\L_{t} (x t)  \geq e^{a t}\right)$?  
 \end{quote}  
 As highlighted above, our primary interest lies in understanding the typical behavior of the BBM when the level set is unusually large.
 For instance, what is the genealogy structure of these particles above level $x t$ conditioned on $\L_{t} (x t)  \geq e^{at}$? What is the distribution of the maximal position   $\max_{u \in \mathcal{N}_{t}} X_{t}(u) $ conditioned on $\L_{t} (x t)  \geq e^{at}$? Overall, our second question read as follows:
  \begin{quote}
    \textbf{Question 2:} What is the  typical behavior of the BBM under  $\mathbf{P}\left( \cdot \mid \L_{t} (x t)  \geq e^{a t}\right)$?
  \end{quote} 

\subsection{Main results}
 Our first main result Theorem \ref{thm-LDP} solves Question 1. 
 We first introduce three important parameters that appear everywhere in this paper:
\begin{equation}\label{eq-def-theta-p-b}
 \theta:= \frac{2(1-a)}{x}, \ p :=\frac{(1-a)\left[x^2-2(1-a)\right]}{x^2-2(1-a)^2} ,
    \text { and } \  b:=\frac{2}{\theta}=\frac{x}{1-a} .
  \end{equation}
 We emphasis that $\theta \in (0,\sqrt{2})$ and $p \in (0,1-a)$ for any $x > 0$ and $ (1-\frac{x^2}{2} )_{+}<a<1$. Moreover, for any  $ |\lambda| < \sqrt{2}$, define 
\begin{equation}\label{eq-C-W-infinity}
   C_{W_{\infty}(\lambda)} := \lim_{y \to \infty} y^{\frac{2}{\lambda^2}} \mathbf{P} \left(  W_{\infty}(\lambda) > y   \right)  . 
\end{equation}
The fact that $C_{W_{\infty}(\lambda)}\in (0,\infty) $ for $ |\lambda| < \sqrt{2}$ is obtained in \cite{Liu00}, with a different proof also provided in \cite{CDM24}.

 
 \begin{theorem}[Precise Large Deviation Estimates]\label{thm-LDP}
  Fix $x > 0$, $(1-\frac{x^2}{2})_{+} <a<1$, and $y> 0$. We have 
  \begin{equation}
    \mathbf{P}\left(\L_{t} (x t)  \geq \frac{y}{\sqrt{t}} e^{a   t}\right) = [1+o(1)]  \hyperref[eq-cst-a-x]{C_{\star}} (x,a) y^{-\frac{2}{\theta^2}}  e^{-I(x,a)t }  \text{ as } t \to \infty.
  \end{equation}
  Above, the constant $  \hyperref[eq-cst-a-x]{C_{\star}}(\cdot,\cdot)$ is defined by  
    \begin{equation}\label{eq-cst-a-x}
    \hyperref[eq-cst-a-x]{C_{\star}} (x,a) :=   \sqrt{\frac{1-p}{1-p+ 2p/\theta^2}}   \frac{C_{W_{\infty}(\theta)} }{ (\theta \sqrt{2 \pi(1-p)} )^{\frac{2}{\theta^2}} }.  
  \end{equation}
\end{theorem}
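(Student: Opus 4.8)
The plan is to cut the BBM at the intermediate time $pt$ and to show that the large deviation event $\{\L_t(xt)\ge \tfrac{y}{\sqrt t}e^{at}\}$ is produced, up to a factor $1+o(1)$, by a single particle that is atypically high around the space--time point $(pt,\, bpt)$ --- here $b=2/\theta>\sqrt2$ because $\theta\in(0,\sqrt2)$ --- whose descendant subtree then performs a typical level-set growth, at linear speed $\theta$, over the remaining time $(1-p)t$. The starting point is the three elementary identities hidden in \eqref{eq-def-theta-p-b}, namely $\theta x-\theta^2(1-p)=2p$, $\bigl(1-\tfrac{\theta^2}{2}\bigr)(1-p)=a$, and $\tfrac{p(2-\theta^2)}{\theta^2}=I(x,a)$. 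Write, for $u\in\mathcal N_{pt}$, $\L^{(u)}$ for the contribution of the subtree rooted at $u$ to the $xt$-level set at time $t$, so that $\L_t(xt)=\sum_{u\in\mathcal N_{pt}}\L^{(u)}$ and, conditionally on $\mathcal F_{pt}$, the $\L^{(u)}$ are independent with $\L^{(u)}$ distributed as $\L_{(1-p)t}(xt-X_{pt}(u))$. The first identity says that $xt-X_{pt}(u)=\theta(1-p)t$ exactly when $X_{pt}(u)=bpt$, and the second that, by the many-to-one formula, $\mathbf E\bigl[\L_{(1-p)t}(\theta(1-p)t)\bigr]=[1+o(1)]\tfrac{e^{at}}{\theta\sqrt{2\pi(1-p)t}}$.

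The key analytic input is a sharp, uniform version of \eqref{eq-typical-GKS} (and of its density refinement) in the upper-deviation regime: as $r=(1-p)t\to\infty$, and for $\delta$ up to order $\sqrt r$,
\begin{equation}
  \mathbf P\bigl(\L_{r}(\theta r+\delta)\ge K\bigr)=[1+o(1)]\;\mathbf P\!\left(W_\infty(\theta)\ge \frac{K}{\mathbf E[\L_r(\theta r+\delta)]}\right),
\end{equation}
valid whenever $K/\mathbf E[\L_r(\theta r+\delta)]\to\infty$ (but not too fast); in words, the heavy upper tail of a large intermediate level set is inherited from that of the additive martingale limit $W_\infty(\theta)$. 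Substituting $K=\tfrac{y}{\sqrt t}e^{at}$, $\delta=bpt-X_{pt}(u)$, using $\mathbf E[\L_{(1-p)t}(\theta(1-p)t+\delta)]\approx \tfrac{e^{at}}{\theta\sqrt{2\pi(1-p)t}}e^{-\theta\delta-\delta^2/(2r)}$ and the tail asymptotics $\mathbf P(W_\infty(\theta)>z)\sim C_{W_\infty(\theta)}z^{-2/\theta^2}$, one gets a closed formula for $\mathbf P(\L^{(u)}\ge K\mid X_{pt}(u))$. Summing over $u\in\mathcal N_{pt}$ by many-to-one attaches to $X_{pt}(u)$ the Gaussian weight $\tfrac{e^{pt}}{\sqrt{2\pi pt}}e^{-X_{pt}(u)^2/(2pt)}$; after the substitution $w=X_{pt}(u)-bpt$ the linear-in-$w$ terms cancel (the saddle is $X_{pt}(u)=bpt$), the three identities above make the $t$- and $y$-dependent factors collapse to $y^{-2/\theta^2}e^{-I(x,a)t}$, the prefactor $\tfrac{e^{at}}{\theta\sqrt{2\pi(1-p)t}}$ turns into $(\theta\sqrt{2\pi(1-p)})^{-2/\theta^2}$, and the residual Gaussian integral in $w$, which has width of order $\sqrt t$, produces exactly the factor $\sqrt{\tfrac{1-p}{1-p+2p/\theta^2}}$. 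This reproduces $\hyperref[eq-cst-a-x]{C_{\star}}(x,a)\,y^{-2/\theta^2}e^{-I(x,a)t}$.

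To turn this into a theorem one needs matching upper and lower bounds. For the upper bound I would carry out the above as a genuine first-moment (many-to-one) estimate, and then discard the negligible pieces: the regime in which the high particle sits far from $bpt$ (controlled by the Gaussian tail of $X_{pt}$, sharpened through the precise asymptotics of $\mathbf P(M_{pt}\ge bpt+w)$), and the ``collective'' regime where $\L_t(xt)\ge K$ with no single $\L^{(u)}\ge K$, which is excluded by the one-big-jump principle for the conditionally independent heavy-tailed summands $\L^{(u)}$ (their tail index is $2/\theta^2>1$). For the lower bound I would run a truncated second-moment argument restricted to particles with $X_{pt}(u)$ in a $\sqrt t$-window around $bpt$ and with $W^{(u)}_\infty(\theta)$ suitably truncated, so that the second moment is comparable to the square of the first. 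The exact constant in the probability of producing such a high particle --- i.e.\ the precise, not merely exponential-order, behaviour of $\mathbf P(M_{pt}\ge bpt+w)$ --- is exactly where the global minimum of the linearly transformed BBM $\bigl(bs-X_s(u)\bigr)_{s\ge0,\,u\in\mathcal N_s}$ (a BBM with drift $b>\sqrt2$) enters, its lower-tail constant being the remaining ingredient of the normalization.

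The main obstacle is this constant bookkeeping. The exponential-rate statement of \cite{AHS19} needs only crude first- and second-moment bounds, but here everything --- the reduction of the subtree's level set to the tail of $W_\infty(\theta)$, the many-to-one sum with its Laplace asymptotics, and the control of the high particle --- must be done with $1+o(1)$ precision and uniformly over the range $\delta\asymp\sqrt t$ that actually dominates, so that the two-sided estimates agree down to the constant $\hyperref[eq-cst-a-x]{C_{\star}}(x,a)$.
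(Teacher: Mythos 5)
There is a genuine error at the heart of your reduction. You claim the event $\{\L_{t}(xt)\ge \tfrac{y}{\sqrt t}e^{at}\}$ is produced, up to $1+o(1)$, by a \emph{single} particle of $\mathcal N_{pt}$ whose subtree alone carries the level set, and you recover $C_{\star}$ by a many-to-one integral over $w=X_{pt}(u)-bpt$ in which the Pareto tail of $W_{\infty}(\theta)$ is used for \emph{all} $w$. But for $w>0$ (particle already above $bpt$) the threshold $K$ lies below the subtree's conditional mean, the tail asymptotics $\P(W_\infty(\theta)>z)\sim C_{W_\infty(\theta)}z^{-\kappa}$ does not apply (your formula returns a ``probability'' $\gg 1$ there), and the true inner probability saturates at $O(1)$; since the many-to-one weight on that side decays like $e^{-b|w|}$, the $w>0$ half contributes only $O(e^{-I(x,a)t}/\sqrt t)$. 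Done correctly, your first-moment count of subtrees with $\L^{(u)}\ge K$ gives $\tfrac12 C_{\star}y^{-\kappa}e^{-I(x,a)t}$, i.e.\ half the answer. The missing half is exactly the scenario in which the optimal space–time line is first hit at a time $\tau<pt$: by the paper's own conditional CLT (the Gaussian fluctuation of $\s-pt$ on scale $\sqrt t$ in Theorem \ref{thm-CondOverlap}), this happens with asymptotic conditional probability $1/2$, and in that regime the exceptional ancestor has $e^{\Theta(\sqrt t)}$ descendants at time $pt$, so that (by a $\sum_u e^{-qV_{pt}(u)}$ estimate with $1<q<\kappa$) the maximal single-subtree share of the level set tends to $0$ — no $u\in\mathcal N_{pt}$ has $\L^{(u)}\ge(1-\epsilon)K$, yet $\L_t(xt)\ge K$. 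Your one-big-jump step cannot discard this ``collective'' regime either, because on it the conditional mean $\E[\L_t(xt)\mid\mathcal F_{pt}]$ is itself of order $K$ (indeed, the unconditioned bound on the remainder term in Lemma \ref{lem-one-big-jump} is of size $\E[W_{s}(\theta)^{\kappa+\delta}]e^{-(\kappa+\delta)\psi'(\kappa)s}\asymp e^{-I(x,a)t}$, the same order as the main term; this is precisely why the paper carries the event $\{\mathbf I_s\ge-\psi'(\kappa)s+z\}$ through Proposition \ref{prop-up-to-constant}). The paper avoids the problem by decomposing at the \emph{random} first-passage time $\tau(z)$ of the tangent line rather than at the fixed time $pt$, and the Gaussian prefactor $\sqrt{\tfrac{1-p}{1-p+\kappa p}}$ arises there as $\E[\mathcal C^{-\kappa}]$ over the hitting-time fluctuation, integrating over both signs of $\tau(z)-pt$.

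A second, subsidiary gap: your ``key analytic input'' — the uniform $1+o(1)$ transfer $\P(\L_r(\theta r+\delta)\ge K)\sim\P\bigl(W_\infty(\theta)\ge K/\E[\L_r(\theta r+\delta)]\bigr)$ at thresholds up to $e^{c\sqrt r}$ above the mean, uniformly in $\delta=O(\sqrt r)$ — is not a consequence of \eqref{eq-typical-GKS} or of Liu's tail estimate; it is a precise moderate-deviation statement of essentially the same nature as the theorem itself, and in the paper it is exactly what the combination of Proposition \ref{prop-up-to-constant}, the spine change of measure at $\tau(z)$, Lemma \ref{lem-replace-level-by-mart}, and the conditional limit theorem of \cite{CDM24} (Lemma \ref{BRWCondMin}) is built to deliver. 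As written, the proposal assumes it rather than proving it, so even the ``one-subtree'' half of the probability is not established.
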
 
 
Our first theorem  slightly differs from Question 1,  which asks about the decay rate of $\P(\L_{t}(xt) \geq e^{at})$, simply because we find it more convenient to present the result (and its proof) for  $\P( \L_{t}(xt) \geq e^{at}/\sqrt{t} )$. The reason is that the law of large numbers \eqref{eq-typical-GKS} suggests it is more natural to consider the large deviation probability of $\L_{t}(xt)/ \E[ \L_{t}(xt) ]$, and there do have a $ t^{-1/2}$ term in $ \E[ \L_{t}(xt) ]$. Nonetheless, an immediate adaptation of our method provides the answer for the original form of Question 1:
For any fixed $y> 0$, 
   \begin{equation}\label{eq-level-set-LDP-y}
    \mathbf{P}\left(\L_{t} (x t)  \geq  y e^{a t}\right) \sim  \hyperref[eq-cst-a-x]{C_{\star}} (x,a)  y^{-\frac{2}{\theta^2}}  t^{-\frac{1}{\theta^2}} e^{-I(x,a)t } \  \text{ as } t \to \infty. 
   \end{equation}

   Here is an immediate corollary of our first theorem. 
   Recall that a random variable $X$ is said to have the Pareto distribution with parameter $\lambda>0$, if $\P( X> y)= y^{-\lambda}$ for $y\geq 1$ and $\lambda>0$. Then Theorem \ref{thm-LDP} yields that 
   \begin{equation}
    \left(  \sqrt{t} e^{-at}  \L_{t} (x t)  \mid  \L_{t} (x t)  \geq    e^{a t}/\sqrt{t}  \right)   \text{ converges in law to }   \mathrm{Pareto} \left( {2}/{\theta^2} \right) . 
   \end{equation} 
This provides a partial answer to Question 2 concerning the behavior of the level set when conditioned on it being large.
In theorems \ref{thm-CondOverlap} and \ref{thm-CondMaxs},  we examine the conditional distribution of  two particularly interesting quantities in the BBM:
the overlap and the maximal position, respectively.  
Before presenting our results, we introduce some   essential terminology. 
 For any two particles $u,v \in \cup_{r \geq 0}\mathcal{N}_{r}$,   the  overlap between $u$ and $v$ is defined as the covariance of their positions, conditionally on the genealogy structure of the BBM: 
\begin{align}
  \mathscr{R}(u,v) &:=  \E \left[ X(u) X(v) \mid   \mathcal{N}_{r}, r \geq 0  \right] \\
  & = \text{ the death time of the most recent common ancestor of } u \text{ and } v.  
\end{align} 
For a given $t>0$, we introduce  a crucial curve in studying the level set large deviation probability, as noted by \cite{AHS19}. Let 
\begin{equation} \label{AHScurve}  
  \mathsf{F}_{t}(r) := t \, \mathsf{f} \left( \frac{r}{t}  \right) , \, r \in [0,t], \text{ where } \mathsf{f}(\lambda)   := x- \ind{a+\lambda \leq 1} \sqrt{2(1-\lambda)(1-\lambda-a)}  .
\end{equation}  

\begin{theorem}[Entropy Repulsion: Overlap]\label{thm-CondOverlap} Given $x >0$ and $(1-\frac{x^2}{2} )_{+}<a<1$. 
  Define $\theta,p,b$ as in \eqref{eq-def-theta-p-b}. 
 Conditioned on the BBM up to time $t$, select two particles $u^1_{t},u^2_{t}$ independently and uniformly from the $xt$-level set. Define 
  \begin{equation}\label{eq-def-ss}
    \s :=\argmin_{t>0} \left\{ \min_{v\in \mathcal{N}_{t}} \left[ \left( \frac{\theta^2}{2}+1 \right)t- \theta X_{t}(v) \right] \right\} . 
  \end{equation}  
Then the  collection of random vectors, conditionally on a large level set size,   
  \begin{equation}\label{eq-overleap-tight}
    \left( \mathscr{R}(u^{1}_{t},u^{2}_{t})- \s   ,  X_{\mathscr{R}(u^{1}_{t},u^{2}_{t})}(u_{t}^{1})- \mathsf{F}_{t} (\mathscr{R}(u^{1}_{t},u^{2}_{t}))   \mid  \L_{t} (x t)  \geq \frac{e^{a t}}{\sqrt{t}} \right)_{t>0}    \text{ is tight }.
  \end{equation}  
As a consequence,  we have the following conditional central limit theorem: 
  \begin{equation}\label{eq-overleap-conv}
   \left(  \frac{  \mathscr{R}(u^{1}_{t},u^{2}_{t}) -  p t}{   \sqrt{pt}} ,   \frac{ X_{ \mathscr{R}(u^{1}_{t},u^{2}_{t}) }(u^{1}_{t})- bp t}{\sqrt{p t}} \mid  \L_{t} (x t)  \geq \frac{e^{a t}  }{\sqrt{t}}   \right)  \Rightarrow  \mathcal{G} \times \left(   \frac{ \theta  }{1-\theta^2/2}  ,   \frac{1+\theta^2/2}{1-\theta^2/2}  \right)  ,
  \end{equation} 
 where  $\mathcal{G}$ follows a centered Gaussian distribution with variance $\frac{1-p}{1-p+2p/\theta^2}$.  
\end{theorem}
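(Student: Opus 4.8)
The plan is to transport the large-deviation geometry underlying Theorem~\ref{thm-LDP} down to the genealogical level, and then to promote a tightness statement to a central limit theorem by a Laplace-type expansion. The structural input I would use is the picture produced by the proof of Theorem~\ref{thm-LDP}: decomposing $\L_t(xt)=\sum_{v\in\mathcal{N}_r}\L^{(v)}_t(xt)$ at a time $r$ of order $pt$ (with $\L^{(v)}_t(xt)$ the number of level-set particles descended from $v$), the law of large numbers \eqref{eq-typical-GKS} applied in each subtree gives $\L^{(v)}_t(xt)\approx W^{(v)}_\infty(\theta)\,\E[\L^{(v)}_t(xt)\mid\mathcal{N}_r]$, where the $W^{(v)}_\infty(\theta)$ are i.i.d.\ given $\mathcal{N}_r$ with the Pareto-type tail $\P(W^{(v)}_\infty(\theta)>y)\sim C_{W_\infty(\theta)}\,y^{-2/\theta^2}$, while $\E[\L^{(v)}_t(xt)\mid\mathcal{N}_r]$ is a Gaussian-type profile of $X_r(v)$ concentrated on the curve $\mathsf{F}_t$ of \eqref{AHScurve}. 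Because a sum of heavy-tailed variables is atypically large essentially only through one of its summands, this forces the conditioned level set to be carried, up to a part negligible with conditional probability tending to $1$, by the level-set descendants of a single \emph{pivotal lineage}: a distinguished ancestral line that tracks $\mathsf{F}_t$ and off which the level-set cloud is shed around a distinguished \emph{pivotal time}. In particular, two particles $u^1_t,u^2_t$ drawn uniformly from the level set lie in this same cloud, so $\mathscr{R}(u^1_t,u^2_t)$ is within $O_{\mathbf{P}}(1)$ of the pivotal time and $X_{\mathscr{R}(u^1_t,u^2_t)}(u^1_t)$ is within $O_{\mathbf{P}}(1)$ of $\mathsf{F}_t(\mathscr{R}(u^1_t,u^2_t))$.

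To establish \eqref{eq-overleap-tight} I would identify the pivotal time with $\s$. Among all lineages, the pivotal one is the one carrying the largest single term $e^{\theta X_r(\cdot)-(1+\theta^2/2)r}=\exp\{-[(\tfrac{\theta^2}{2}+1)r-\theta X_r(\cdot)]\}$ of the additive martingale, and the time at which this term is maximised over all $r$ is, by \eqref{eq-def-ss}, exactly $\s$ --- this is the ``global minimum of the linearly transformed BBM''. Under $\P(\,\cdot\mid\L_t(xt)\ge e^{at}/\sqrt t\,)$ this time sits at $pt+O_{\mathbf{P}}(\sqrt t)$, and the pivotal point is a pinch point of the conditioned trajectory, so the lineage is pinned to $\mathsf{F}_t$ to within $O_{\mathbf{P}}(1)$ there. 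Combined with the previous paragraph this yields $\mathscr{R}(u^1_t,u^2_t)-\s=O_{\mathbf{P}}(1)$ and $X_{\mathscr{R}(u^1_t,u^2_t)}(u^1_t)-\mathsf{F}_t(\mathscr{R}(u^1_t,u^2_t))=O_{\mathbf{P}}(1)$, which is precisely \eqref{eq-overleap-tight}.

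For \eqref{eq-overleap-conv} I would reduce, using \eqref{eq-overleap-tight}, to a central limit theorem for $\s$ itself: since $\mathscr{R}(u^1_t,u^2_t)=\s+O_{\mathbf{P}}(1)$, $X_{\mathscr{R}}(u^1_t)=\mathsf{F}_t(\s)+O_{\mathbf{P}}(1)$ and, once $\s-pt=O_{\mathbf{P}}(\sqrt t)$, $\mathsf{F}_t(\s)-\mathsf{F}_t(pt)=\mathsf{f}'(p)(\s-pt)+O_{\mathbf{P}}(1)$, the $O_{\mathbf{P}}(1)$ errors disappear after dividing by $\sqrt{pt}$ and the second coordinate becomes asymptotically $\mathsf{f}'(p)$ times the first. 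It then remains to describe the law of $\s$ under the conditioning: the conditional probability that the pivotal lineage is on $\mathsf{F}_t$ at time $\mu t$ (and sheds a sufficiently large cloud thereafter) decays, up to polynomial factors, like $e^{-t\psi(\mu)}$ for a rate $\psi$ with a nondegenerate minimum $\psi(p)=I(x,a)$, so the quadratic expansion $\psi(\mu)=\psi(p)+\tfrac12\psi''(p)(\mu-p)^2+O((\mu-p)^3)$ gives $(\s-pt)/\sqrt{pt}\Rightarrow\mathcal{N}\!\big(0,\tfrac1{p\psi''(p)}\big)$. A direct computation with \eqref{eq-def-theta-p-b} and $\mathsf{f}'(p)=(1+\theta^2/2)/\theta$ shows $\tfrac1{p\psi''(p)}=\tfrac{\theta^2(1-p)}{(1-\theta^2/2)^2(1-p+2p/\theta^2)}$, whence the first coordinate converges to $\tfrac{\theta}{1-\theta^2/2}\,\mathcal{G}$ and the second to $\tfrac{1+\theta^2/2}{1-\theta^2/2}\,\mathcal{G}$ with $\mathcal{G}$ centred Gaussian of variance $\tfrac{1-p}{1-p+2p/\theta^2}$, which is \eqref{eq-overleap-conv}.

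The hard part is the first paragraph: turning the single-pivotal-lineage picture into rigorous statements, with the sharp $O(1)$ (not $O(\sqrt t)$) control of the transverse displacement at the pivotal point. This requires the precise tail constant $C_{W_\infty(\theta)}$ together with the first- and second-moment estimates developed for Theorem~\ref{thm-LDP}, in order to rule out the competing scenarios --- two lineages sharing the deviation, the deviation realised at a different height or a different time, or the level set produced diffusely --- and it requires a local analysis near the pivotal point showing that the conditioned genealogy there is governed by an (unconditioned) linearly transformed branching Brownian motion; this is exactly what forces $u^1_t$ and $u^2_t$ to split within $O_{\mathbf{P}}(1)$ of $\s$ rather than deep inside the shed cloud, the latter being what the naive second moment (dominated by atypically large clouds) would suggest. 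Once this is in hand, the remainder is the Laplace-method computation together with the algebraic identities recorded in \eqref{eq-def-theta-p-b}.
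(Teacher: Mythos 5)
Your outline is correct and, at its core, follows the same route as the paper: a one-big-jump/pivotal-lineage reduction coming from the heavy tail of $W_\infty(\theta)$, identification of the pivotal time with $\s$ (equivalently with the first passage time $\tau(z)$ of the linearly transformed BBM), tightness of the transverse displacement around $\mathsf{F}_t$, and a CLT for the pivotal time with the second coordinate slaved to the first through the slope $\mathsf{f}'(p)=(1+\theta^2/2)/\theta$. The one genuinely different-looking piece is your CLT step: you expand the curve-hitting rate function $\mu\mapsto \frac{\mathsf{f}(\mu)^2}{2\mu}-\mu$ around its minimum $p$, whereas the paper works along the tangent line $\mathsf{L}_t$, where the hitting-time fluctuation alone (inverse Gaussian, Lemma \ref{lem-BMhitting}) would give the larger variance $\psi''(\kappa)/\psi'(\kappa)^2$, and the reduction to $\frac{1-p}{1-p+2p/\theta^2}$ comes from an explicit Gaussian tilting factor $\mathcal{C}^{-\kappa}$ (the correction $C_t(z)$ of Lemma \ref{lem-replace-level-by-mart}, fed into the conditional limit theorem, Lemma \ref{BRWCondMin}, to get \eqref{eq-cond-min-pt-clt}). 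These are the same computation in different coordinates: the $O(1)$ gap $\mathsf{F}_t-\mathsf{L}_t$ at times $pt+O(\sqrt t)$ is exactly what produces the tilting, and your claimed identity $\frac{1}{p\,\psi''(p)}=\frac{\theta^2(1-p)}{(1-\theta^2/2)^2(1-p+2p/\theta^2)}$ does check out numerically against the paper's tilted-Gaussian answer, so your bookkeeping of the variance is right.

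Two caveats on where the sketch is thinnest, both of which you flag but do not argue. First, ruling out that $u^1_t,u^2_t$ split deep inside the shed cloud is handled in the paper not by a moment bound (which, as you note, would be dominated by atypically large clouds) but by transporting the \emph{unconditioned} overlap lemma (Lemma \ref{lem-overlap}) into the cloud via the spine change of measure, together with the distant-relative estimates of Lemma \ref{lem-truncated-level-set} and the decomposition over $\mathbf{I}\in\Gamma[t,z]$ justified by Proposition \ref{prop-up-to-constant}; some such mechanism is indispensable and is not a routine consequence of the Theorem \ref{thm-LDP} estimates. Second, your Laplace step as phrased ("decays, up to polynomial factors, like $e^{-t\psi(\mu)}$") only gives logarithmic asymptotics; to conclude a weak limit at scale $\sqrt t$ one needs the prefactor (first-passage density, the tail constant of $W_\infty$ at the slightly varying slope, and the cloud-size threshold) to be controlled uniformly over windows of width $O(\sqrt t)$ — this is precisely the content of Lemma \ref{lem-replace-level-by-mart} and Lemma \ref{BRWCondMin} in the paper, so the "remainder" after tightness is not merely algebra but the same precise machinery.
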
 

Recall that  $M_{t}:=\max_{u \in \mathcal{N}_{t}} X_{t}(u)$ denotes the maximal position of the particles in the BBM at time $t$. 

\begin{theorem}[Entropy Repulsion: Maximum]\label{thm-CondMaxs}
Given $x >0$ and $(1-\frac{x^2}{2} )_{+}<a<1$.  
  Define $\theta,p,b$ as in \eqref{eq-def-theta-p-b}.  Let $v : =   b p+ \sqrt{2}(1-p) $. Then  as $t \to \infty$,  
  \begin{equation}\label{eq-cond-maxi-clt}
   \left( \frac{ M_{t} - v t}{\sqrt{t}}      \mid  \L_{t} (x t)  \geq \frac{e^{a t} }{\sqrt{t}}   \right)   \Rightarrow   \frac{\sqrt{2}-\theta}{\sqrt{2}+ \theta}  \, \mathcal{G} ,
  \end{equation}
 where  $\mathcal{G}$ is the same as in \eqref{eq-overleap-conv}.
\end{theorem}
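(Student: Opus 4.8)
The plan is to derive Theorem~\ref{thm-CondMaxs} from the genealogical picture of the conditioned BBM built up in the proof of Theorem~\ref{thm-CondOverlap}. Conditionally on the atypical event $E_t:=\{\L_t(xt)\ge e^{at}/\sqrt t\}$, that analysis isolates (outside an event of conditional probability $o(1)$) a branching time $\tau_t$ and a particle $w_t\in\mathcal N_{\tau_t}$ — in effect the most recent common ancestor of the whole $xt$-level set — with $\tau_t=pt+O_{\mathbf P}(\sqrt t)$ and $X_{\tau_t}(w_t)=\mathsf F_t(\tau_t)+O_{\mathbf P}(\sqrt t)=bpt+O_{\mathbf P}(\sqrt t)$, such that all particles relevant to the level set lie in the subtree rooted at $w_t$, which over the remaining time $t-\tau_t\approx(1-p)t$ behaves as a branching Brownian motion. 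The level that $E_t$ imposes on this subtree is $xt-X_{\tau_t}(w_t)\approx(x-bp)t$, i.e.\ speed $x':=(x-bp)/(1-p)$, and a short computation from \eqref{eq-def-theta-p-b} gives $x'=\theta$ together with $(1-\tfrac{\theta^2}{2})(1-p)=a$; these say precisely that the subtree's $\theta(1-p)t$-level set has expected size $\asymp e^{at}/\sqrt t$, so that $E_t$ constrains the subtree only to have its additive martingale limit above a fixed positive constant — a $\Theta(1)$-probability requirement, which perturbs its extremal behavior only at the $\log$ scale. It follows that $M_t$ is attained inside the subtree of $w_t$, whence $M_t=X_{\tau_t}(w_t)+M^{\mathrm{sub}}_{t-\tau_t}$ with $M^{\mathrm{sub}}_{t-\tau_t}=\sqrt2\,(t-\tau_t)-\tfrac{3}{2\sqrt2}\log\big((1-p)t\big)+O_{\mathbf P}(1)$ by the classical asymptotics of the BBM maximum (the bounded-below conditioning on the martingale limit only reshapes the limit law of the $O_{\mathbf P}(1)$ remainder).

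Granting this and using $v=bp+\sqrt2(1-p)$, the deterministic logarithmic corrections are $o(\sqrt t)$, and one is left, under $\mathbf{P}(\,\cdot\mid E_t)$, with
\begin{equation}
 \frac{M_t-vt}{\sqrt t}=\frac{\big(X_{\tau_t}(w_t)-bpt\big)-\sqrt2\,\big(\tau_t-pt\big)}{\sqrt t}+o_{\mathbf P}(1).
\end{equation}
Since the subtree of $w_t$ runs at the intermediate speed $\theta$, two particles drawn uniformly from the $xt$-level set have their most recent common ancestor within $O_{\mathbf P}(1)$ of $\tau_t$ in time and of $X_{\tau_t}(w_t)$ in position; hence the pair $\big(\tau_t-pt,\;X_{\tau_t}(w_t)-bpt\big)$ has the same asymptotic law as the pair $\big(\mathscr R(u^1_t,u^2_t)-pt,\;X_{\mathscr R(u^1_t,u^2_t)}(u^1_t)-bpt\big)$ of Theorem~\ref{thm-CondOverlap}, so that the numerator above, after normalization, converges to the linear combination $\tfrac{1+\theta^2/2}{1-\theta^2/2}\,\mathcal G-\sqrt2\,\tfrac{\theta}{1-\theta^2/2}\,\mathcal G$ of the Gaussian in \eqref{eq-overleap-conv}. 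Using the continuous mapping theorem and the elementary identities $2+\theta^2-2\sqrt2\,\theta=(\sqrt2-\theta)^2$ and $2-\theta^2=(\sqrt2-\theta)(\sqrt2+\theta)$,
\begin{equation}
 \frac{1+\theta^2/2}{1-\theta^2/2}-\sqrt2\cdot\frac{\theta}{1-\theta^2/2}
 =\frac{2+\theta^2-2\sqrt2\,\theta}{2-\theta^2}
 =\frac{\sqrt2-\theta}{\sqrt2+\theta},
\end{equation}
so that $(M_t-vt)/\sqrt t\Rightarrow\tfrac{\sqrt2-\theta}{\sqrt2+\theta}\,\mathcal G$, with $\mathcal G$ the same centered Gaussian as in Theorem~\ref{thm-CondOverlap}; this is exactly \eqref{eq-cond-maxi-clt}.

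The step I expect to be the main obstacle is the localization of $M_t$ inside the subtree of $w_t$ — ruling out a particle outside that subtree reaching a height close to $vt$ while $E_t$ still holds. The crucial inequality is $v>\sqrt2$, equivalently $x^2>2(1-a)^2$, which is forced by the standing assumption $a>1-\tfrac{x^2}{2}$; since the part of the tree disjoint from the subtree of $w_t$ is, after conditioning, essentially an unconditioned BBM, its maximum is $\sqrt2\,t+O(\log t)<vt$, and one makes this quantitative by a first-moment bound over the admissible branching particles — of exactly the kind that drives the proof of Theorem~\ref{thm-LDP} — showing that $\mathbf{P}\big(\{\exists\,u\notin\text{subtree}(w_t):X_t(u)\ge(v-\varepsilon)t\}\cap E_t\big)$ is an $e^{-\Theta(t)}$ factor smaller than $\mathbf{P}(E_t)\asymp C_\star(x,a)\,e^{-I(x,a)t}$, hence conditionally negligible. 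The remaining burden is bookkeeping: each $O_{\mathbf P}(\sqrt t)$ and $o_{\mathbf P}(\sqrt t)$ control above must be verified relative to the small probability $\mathbf{P}(E_t)$ rather than under $\mathbf{P}$, which in practice means rerunning the precise first-moment computations behind Theorem~\ref{thm-LDP} with the additional geometric constraint built in and checking it costs only a subexponential (indeed $e^{-\Theta(t)}$) factor.
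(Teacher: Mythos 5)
Your overall route is the same as the paper's: reduce to the Gaussian fluctuation of the time/space point where the conditioned BBM meets the line $\mathsf{L}_t$ (which is exactly what \eqref{eq-overleap-conv} and \eqref{eq-cond-min-pt-clt} encode), show that conditionally $M_t$ equals that spine position plus $\sqrt{2}\times$(remaining time) up to $o_{\mathbf P}(\sqrt t)$, and finish with the algebraic identity $\tfrac{1+\theta^2/2}{1-\theta^2/2}-\sqrt2\,\tfrac{\theta}{1-\theta^2/2}=\tfrac{\sqrt2-\theta}{\sqrt2+\theta}$, which is the same computation as the paper's identity $\mathsf{L}_t(\tau(z))+\sqrt2\,[t-\tau(z)]=vt+\tfrac{(\theta-\sqrt2)^2}{2\theta}(\tau(z)-s)$. (A cosmetic point you share with the statement itself: \eqref{eq-overleap-conv} is normalized by $\sqrt{pt}$, not $\sqrt t$, so keep track of the $\sqrt p$ when you pass to the normalization of \eqref{eq-cond-maxi-clt}.)

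The genuine gap is in the localization step that you yourself flag as the main obstacle. It is not true that, under the conditioning, the part of the tree outside the subtree of $w_t$ is ``essentially an unconditioned BBM'' with maximum $\sqrt2\,t+O(\log t)$, nor that $\mathbf P\big(\{\exists\,u\notin\mathrm{subtree}(w_t):X_t(u)\ge(v-\varepsilon)t\}\cap E_t\big)$ is an $e^{-\Theta(t)}$ factor smaller than $\mathbf P(E_t)$. Under the tilted measure the spine runs up the line $\mathsf{L}_t$, and the brother subtrees branching off it within $O_{\mathbf P}(1)$ of time $\tau$ are rooted at height $\approx bpt$; each of them typically sends a particle to $\approx bpt+\sqrt2(1-p)t=vt$, so particles outside the subtree of $w_t$ exceed $(v-\varepsilon)t$ with conditional probability tending to one, and your proposed first-moment bound cannot hold. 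The correct statement — and what the paper proves — splits the brothers of the spine by branching time: those branching more than $\ell$ before $\tau$ stay below the \emph{random} level $X_{\tau}(\hat u)+\sqrt2\,[t-\tau]$ except on an event whose (renormalized) probability is only $O(e^{-c\ell})$, controlled by the decay $e^{-\sqrt2\,[\mathsf{L}_t(b_v)-X_{b_v}(w)]}$ along the spine (Lemma \ref{lem-truncated-level-set}(iv)); the finitely many subtrees branching within $[\tau-\ell,\tau]$, including the spine subtree, are handled by the LLN for the BBM maximum and shift $M_t$ by at most $O_{\mathbf P}(\log t)$, harmless at scale $\sqrt t$. So your conclusion is reachable, but the localization must be reformulated against the fluctuating level $X_\tau(\hat u)+\sqrt2(t-\tau)$ and yields only a cutoff-parameter smallness, not an exponential-in-$t$ gain; as written, that step would fail.
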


\begin{remark} In contrast to Theorems \ref{thm-CondOverlap} and \ref{thm-CondMaxs}, without   conditioning, the overlap   converges in distribution, and the maximal position exhibits a ballistic spreading speed $\sqrt{2}$ which is strictly smaller than $v$, see \eqref{eq-HS-maximum}.  More precisely, we will prove in Lemma \ref{lem-overlap} that  
  \begin{equation} 
    \lim_{t \to \infty} \P( \mathscr{R}(u^{1}_{t},u^{2}_{t}) \geq r ) =  \E \left[ \sum_{v \in \mathcal{N}_{r} } \left( e^{x X_{r}(v) - (\frac{x^2}{2}+1) r} \frac{ W_{\infty}^{(v)}(x) }{W_{\infty}(x)} \right)^2  \right], \, \forall  \, r >0  .
  \end{equation}
 And the limiting distribution has no mass at infinity: $  \lim\limits_{r \to \infty}\lim\limits_{t \to \infty} \P( \mathscr{R}(u^{1}_{t},u^{2}_{t}) \geq r ) = 0 $. Additionally, it is well-known that $M_{t}$ satisfies (see for example \cite{Bramson83,Aidekon13,BDZ16})
 \begin{equation}
  M_{t} - \sqrt{2}t + \frac{3}{2 \sqrt{2}} \ln t \text{ converges in law.}
 \end{equation}  
 We regard Theorems \ref{thm-CondOverlap} and \ref{thm-CondMaxs} as phenomena of entropy repulsion: the imposed constraint that $\L_{t} (x t) \geq e^{a t}/\sqrt{t}$ affects the natural evolution of the BBM,  compelling a particle in the BBM to reach the space-time curve $(r , F_{t}(r))$ at some random time $r$
around $pt$, with Gaussian fluctuations of order $\sqrt{t}$. 
\end{remark}

\begin{remark}
Although in this paper, we focus on the simplest binary branching Brownian motion setting  as in \cite{AHS19}, our approach could be  applied to the context of the branching random walk under the  same framework as in \cite{CDM24}. 
\end{remark}

\subsection{Related work and further questions}
In this paper, we focus exclusively  on rare events where the level sets become unusually large. The lower deviation probabilities, where the level set is unusually small, were investigated in \cite{Oz20} for BBMs and \cite{Zhang23} for BRWs, with the corresponding rate functions obtained.
It is also interesting to study BRWs on $\R^{d}$ or on general graphs.
We refer to \cite{AB14} for BRWs on  $\mathbb{R}^{d}$  and to \cite{LMW24} for BRWs on free groups, where the exponential growth rate of the level sets was obtained. Additionally,  
branching random walks are closely connected to random recursive tree (and random search trees), see  \cite{Devroye87,Pittel94,ABF13}.  In particular, by using the result on the minimum of BRWs, \cite{ABF13} showed that the height of a random recursive tree on $n$ nodes is $e \ln n - \frac{3}{2} \ln(\ln n) + O_{\P}(1)$.
A natural question arising from this is whether the results on level sets of BRWs can be extended to describe the growth rate of the number of nodes at a distance greater than $\lambda e \ln n$, with $\lambda \in (0,1)$.

In \cite{GKS18}, Glenz, Kistler, and Schmidt conjectured that a law of large numbers, as stated in \eqref{eq-typical-GKS}, holds for all models within the BBM-universality class (see \cite{Zeitouni12, Kistler14, Arguin16, BK22}).
Biskup and Louidor confirmed this for intermediate level sets of the two-dimensional discrete Gaussian free field (2D DGFF) in \cite{BL19}.  The large deviation probabilities for these level sets in the 2D DGFF were studied by A\"{i}d\'{e}kon, Hu, and Shi in \cite{AHS19}. While  the rate function was derived, a precise estimate similar to Theorem \ref{thm-LDP}, remains unclear.
Furthermore, results similar to \eqref{eq-typical-GKS} have been shown  for  local times of two-dimensional random walks in \cite{AB22}, for the local times of planar Brownian motion in \cite{Jego20}, and  for a random model of the Riemann-zeta function in \cite{AHK22}.
However, the large deviation probabilities for these models are still not fully understood, and we believe that they are worth further exploration.

\subsection{Proof ideas}  
First let us recall the arguments in \cite{AHS19}.  
If we start the BBM at time-space point $(r,\mathsf{F}_{t}(r))$, under what conditions will there be approximately $e^{at +o(t)}$ particles positioned above level $x t$ at time $t$ with positive probability? According to the law of large numbers \eqref{eq-typical-GKS}, we  need 
\begin{equation}\label{eq-AHScurve-2}
(t-r) - \frac{  [xt - \mathsf{F}_{t}(r) ]^{2} }{2(t-r)}  = at, \text{ if }  r \leq (1-a)t,  \text{ and } \mathsf{F}_{t}(r)= x t , \text{ if }r \geq (1-a) t.
\end{equation}
This is exactly the definition  \eqref{AHScurve}. Thus, in order that $\{ \L_{t}(xt) \geq e^{at}\}$ occurs,  one strategy is  to have a particle located above level $\mathsf{F}_{t}(r)$ at time $r$.
Since $\P( M_{r} \geq y) \approx e^{r- y^2/(2r)+o(r)}$ for $y \geq \sqrt{2} r$, we take  the best choice of $r$ such that $\sup _{ r \leq t }\left\{ r-\frac{ \mathsf{F}_{t}(r) ^2}{2 r}\right\}$ is attained. It turns out that the maximizer is $r^{*}= pt$, and  
\begin{equation}
   \sup _{ r \leq (1-a)t }\left\{ r-\frac{ \mathsf{F}_{t}(r) ^2}{2 r}\right\} =-I(a, x) . 
\end{equation}  
This strategy gives the  lower bound $\P(\L_{t}(xt)\geq e^{at}) \geq e^{- I(x,a)t+o(t)}$ in \cite{AHS19}. The corresponding upper bound, however, is more subtle. In \cite{AHS19}, the authors discretize space and time to 
classify possible trajectories,
and estimate the number of particles following each  one. Since the total number of  trajectories grows sub-exponentially—specifically $\exp\{O(t^{1-\delta}\ln t)\}$—it appears challenging to improve their argument to obtain a tighter bound.
However it is shown in \cite{AHS19} that the probability of having more than $e^{at}$ particles
 following a trajectory that is far from $r \mapsto \mathsf{F}_{t}(r)$ is negligible compared to $e^{-I(a,t)t+o(t)}$. This lead us to guess that the optimal strategy for the large deviation event involves forcing the BBM approach the curve $r \mapsto \mathsf{F}_{t}(r)$ near the time $pt$.

 \begin{figure}[t]
  \centering
  \includegraphics[height=6cm]{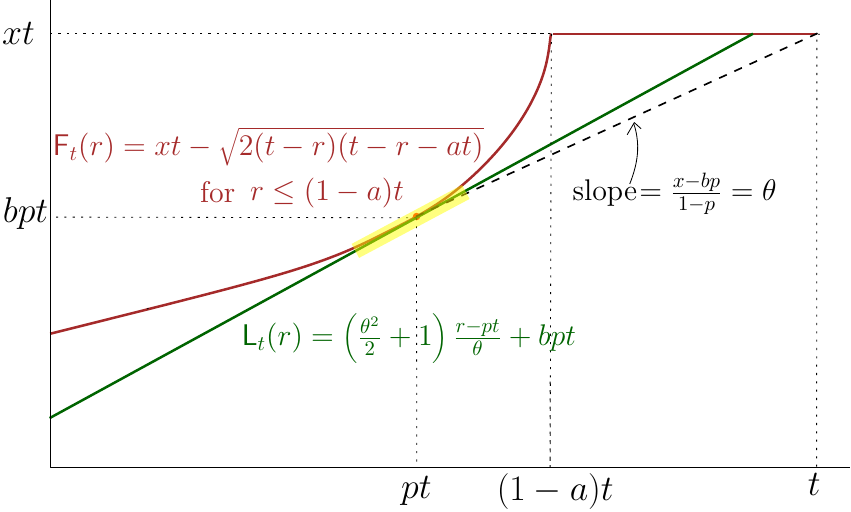} 
  \caption{The functions $\mathsf{F}_{t}$ and $\mathsf{L}_{t}$}\label{fig-FL}
 \end{figure}

 Studying the probability of BBM hitting a curve is challenging, while hitting a line is simpler.  Our approach is to approximate  $\mathsf{F}_{t}$ using its tangent line at the point  $(pt, \mathsf{F}_{t}(pt))$. Since $ \mathsf{F}_{t}(pt) = b pt$, let $\mathsf{L}_{t}$ denote this tangent line at $(pt, bpt)$, see Figure \ref{fig-FL}. Then  
 \begin{equation}
   \mathsf{L}_{t}(r):=  \left( \frac{\theta^2}{2}+1  \right)  \frac{r-p t}{\theta}  +  b p t \quad   \text{ for } r \in [0,t]
 \end{equation}
 Observe that the BBM  hits the line $\mathsf{L}_{t}$ if and only if there exists $r>0$ and $u \in \mathcal{N}_{r}$ such that $X_{r}(u)=  \left( \frac{\theta^2}{2}+1  \right)  \frac{r-p t}{\theta}  +  b p t $. We define 
\begin{equation}
  V_{r}(u) :=\left( \frac{\theta^2}{2} + 1  \right) r - \theta X_{r}(u) \  \text{ for all } u \in \mathcal{N}_{r}  , r \geq 0. 
\end{equation}  
Since $b =2/\theta$,  we can rewrite  $X_{r}(u)=  \left( \frac{\theta^2}{2}+1  \right)  \frac{r-p t}{\theta}  +  b p t $ as 
  $ V_{r}(u)=- (1-\frac{\theta^2}{2}) pt$. Set 
\begin{equation}
  \mathbf{I}:= \min \{  V_{r}(u) : u\in \mathcal{N}_r , r \geq 0 \}.
\end{equation}
Thus, the event that the BBM hits the line  $\mathsf{L}_{t}$  is equivalent to the event that the global minimum $\mathbf{I}$ of the linearly transformed BBM is smaller than $ -(1-\theta^2/2)pt$:
\begin{equation}
 \{ \text{BBM hits }  \mathsf{L}_{t} \} =   \left\{ \mathbf{I} \leq - (1- \frac{\theta^2}{2}) pt   \right\}. \label{eq-opts-1}
\end{equation} 

In \S \ref{sec-OptS} Proposition \ref{prop-up-to-constant},  we demonstrate that   the probability of the event  $\L_{t}(xt) \geq e^{at}/\sqrt{t}$ while $|\mathbf{I} +  (1- \frac{\theta^2}{2}) pt| > z>0$ is bounded above by $C e^{-I(a,x)t - z/2}$. And in \S \ref{sec-sharp-estimate} we show that for fixed $z$,
the probability of the event  $\L_{t}(xt) \geq e^{at}/\sqrt{t}$ while $\mathbf{I} +  (1- \frac{\theta^2}{2}) pt \in [z,z+1]$ is asymptotically equivalent to $C_{z} e^{-I(a,x)t}$. This implies that \eqref{eq-opts-1} provides
the optimal strategy for the large deviation event $\L_{t}(xt) \geq e^{at}/\sqrt{t}$.

The main challenge in this paper lies in proving Proposition \ref{prop-up-to-constant}, which is tackled using the following two key elements:
(i) In Lemma \ref{lem-rough-bnd} we provides 
a tighter upper bound for the large deviation probability than \cite{AHS19} by use of the  tail inequality for the martingale limit $W_{\infty}(\theta)$.
(ii)  A useful but seemingly overlooked  inequality (Lemma \ref{lem-one-big-jump}) from \cite{HN04} turns out can be 
combined with the moment estimate \eqref{martmin}  obtained recently in \cite{CDM24} and Lemma \ref{lem-rough-bnd} to conclude the desired result. 
In \S \ref{sec-sharp-estimate}, we apply the widely-used spine decomposition technique along with the results from \cite{CDM24} on BRW conditioned on an extremely negative global minimum. This approach enables us to prove Theorems \ref{thm-LDP}, \ref{thm-CondOverlap}, and \ref{thm-CondMaxs}.

 \subsection*{Notation convention}
 Through out the paper, we  assume  $x > 0$ and  $(1-\frac{x^2}{2})_{+} <a<1$.
 The parameters $\theta, p, b$ are functions of $x,a$ as defined in \eqref{eq-def-theta-p-b}. 
Let  $\psi= \psi_{\theta}$ represent the log-Laplace transform associated with $(V_{t}(u):u\in \mathcal{N}_{t})$: $
  \psi(\lambda) := \ln \mathbf{E} \left[  \sum_{u \in \mathcal{N}_{1}}  e^{-\lambda V_{1}(u)} \right] = \left( \frac{\theta^2}{2} \lambda -1 \right) \left( \lambda-1 \right) $ for $\lambda \in \mathbb{R}$.
Let $\kappa$ be the largest zero of $\psi$. Then 
\begin{equation}
  \label{def-ka-dpsika}
  \kappa = 2/\theta^2 , \  \psi'(\kappa) = 1- {\theta^2}/{2} \text{ and } \psi''(\kappa) = \theta^2 . 
\end{equation}
We can then rewrite the event in \eqref{eq-opts-1} as $\{ \mathbf{I} \leq -\psi'(\kappa)s \}$ where $s:= p t$. 

We use $ \Rightarrow   $ to denote convergence in distribution.   
We use $C$ and $c$ to denote positive constants that may vary between lines. When a constant depends on a parameter $\lambda$, we write $C_\lambda$ or $c_\lambda$. We write $f \lesssim g$ to indicate that there exists a constant $C > 0$ such that $f \leq Cg$, and $f \lesssim_\lambda g$ when the constant $C$ depends on $\lambda$. Additionally, we use the standard notation $\Theta(f)$ to denote a non-negative quantity such that $c_1 f \leq \Theta(f) \leq c_2 f$ for some constants $c_1, c_2 > 0$.

 \section{Preliminaries}
 \label{ref-preliminary}

   \subsection{Spine Decomposition}\label{sec-spine}

   The spinal decomposition is a powerful technique introduced in \cite{LPP95}
   for studying Galton-Waston processes and generalized to BRWs and BBMs in  \cite{Lyo97,Kyprianou04} etc.  Here we follow \cite[Chapter 4]{Ber14}. 
   We will distinguish one particular line of descent from the root, so at each time there will be a marked particle that we will call the spine.  By enlarging the probability space to take this extra information into account, we can significantly simplify the description of the process constructed via the tilted probability using the additive martingale $(W_{t}(\beta))_{t \geq 0}$ defined by 
   \begin{equation}
    W_{t}(\beta) := \sum_{u \in \mathcal{N}_{t}} e^{ \beta X_{t}(u)- (\frac{\beta^2}{2}+1)t} 
   \end{equation}

   First, sample a standard BBM $(X_{t}(u):u \in \mathcal{N}_{t})$ according to  $\mathbf{P}$. Conditioned on this BBM, we construct a distinguished line of descent from the root inductively as follows. Let $u_0$ denote the root particle in the BBM. For $j \geq 0$, select a child $u_{j+1}$ of $u_{j}$ uniformly at random.  
   For each particle $v$, denote its  death time by $d_{v}$  and its life time by $\eta_{v}$, respectively. 
   Define $w_{t}= u_{j}$ for $t \in [d_{u_{j}}-\eta_{u_{j}}, d_{u_{j}} )$. (Note that $d_{u_{j+1}}=\eta_{u_{j+1}}+d_{u_{j}}$, so $w_{t}$ is well-defined for all $t \geq 0$.) 
   Let $\mathbb{P}^{*}$ denote the law of the pair  $(\{X_{t}(u) :u \in \mathcal{N}_{t}\},w_{t} )_{t \geq 0}$. 
   Write  
  $\Xi_{t}:= X_{t}(w_{t})$ for the position of the spine at time $t$.
  Let $(s_{j})_{j \geq 1}$ be the times of branching events along the spine, and 
  $n=( n_t )_{t \geq 0}$ be the corresponding counting process. (In other words $n_{t}=|w_{t}|$, where $|u|$ represents the generation of particle $u$). Then we have 
  \begin{equation}\label{eq-density-Pstar}
    \dif \P^{*} (X,w) =   \dif \mathbb{W}(\Xi)    \dif \mathbb{L} (n)\prod_{j \geq 1} \frac{1}{2}   \dif \P_{\Xi(s_{j}) } ( X^{(j)} )  .
  \end{equation}
Above, $\mathbb{W}$ represents the Wiener measure, $\mathbb{L}$ represents the law of Poisson process with rate $1$, $\P_{x}$ is the law of standard BBM starting from $x$, and $X^{(j)}$ denotes the process   $\{X_{s_{j}+t}(u)-\Xi(s_{j}): u \in \mathcal{N}^{(v_{j})}_{t} : t \geq 0\}$, where $v_{j}$ is the brother of $w_{s_{j}}$. Here  $\mathcal{N}^{(v)}_{t}$ denotes the set of all descendants of particle $v$ at time $b_v+t$. 

Let $(\mathcal{F}_{t})_{t \geq 0}$ and  $(\mathcal{F}_{t}^{*})_{t \geq 0}$ denote  the natural filtration of the BBM without and with a spine, respectively. Then it is clear that $(e^{\beta \Xi_{t}- {\beta^2 t}/{2}} e^{-t}2^{n_{t}}, \mathcal{F}^{*}_{t})_{t \geq 0}$ is a $\P^{*}$ martingale.   
Fix a bounded stopping time  $\tau$ of $(\mathcal{F}^{*}_{t})_{t \geq 0}$, we  define  the probability measure $(\mathbf{Q}^{\beta,*}_{\tau} \otimes \P) $ by
   \begin{align}
   & \dif (\mathbf{Q}^{\beta,*}_{\tau} \otimes \P) (X,w)  :=  e^{\beta \Xi_{\tau}-  (\frac{\beta^2}{2}+1)\tau } 2^{|w_{\tau}| }\dif \P^{*} (X,w) \\
    &=e^{\beta \Xi_{\tau}-\frac{\beta^2}{2}\tau} \dif \mathbb{W}(\Xi)  e^{-\tau} 2^{n_{\tau}} \dif \mathbb{L} (n)  \prod_{j \geq 1} \frac{1}{2} \dif \P_{\Xi(s_{j}) } ( X^{(j)} ) .  \label{eq-density-Qtau}
  \end{align}   
The construction in \eqref{eq-density-Qtau} shows that the process under  $\mathbf{Q}^{\beta,*}_{\tau} \otimes \P$ corresponds to the law of a non-homogeneous branching motion with distinguished and randomized spine having the following properties:
\begin{itemize}
  \item Spine particles behave differently from the ordinary particles before the stopping time $\tau$: They  perform   a Brownian motion with drift $\beta$ and branch according to their own (independent) exponential clocks with rate $2$.  But after the stopping time $\tau$, spine particles follow the same dynamics as the ordinary particles. 
  \item 
  At each branching event of a spine particle, one of its two offspring is chosen uniformly at random to continue as the new spine particle, while the other becomes an ordinary particle. 
\item Ordinary particles  initiate $\P$-BBMs at their space-time point of creation.
\end{itemize}
 
\begin{lemma}\label{changeofp} 
 The spinal decomposition can be stated as follows.  
  \begin{enumerate}[(i)] 
   \item Under  $\mathbf{Q}^{\beta,*}_{\tau} \otimes \P$, $ \left( \Xi_{t} - \beta ( t \wedge \tau ): t\geq 0 \right) $ is a standard Brownian motion. 
   \item Assume that  $\tau $ is a bounded stopping time w.r.t. $(\mathcal{F}_{t})$. Then 
   \begin{equation}\label{eq-cop-11}
    \dif (\Q^{\beta,{*}}_{\tau} \otimes \P ) |_{\mathcal{F}_{\infty}} =W_{\tau}(\beta) \dif \P . 
   \end{equation}
   Moreover, for any particle $u$,
\begin{equation}\label{eq-cop-22}
    (\Q^{\beta,{*}}_{\tau} \otimes \P )(w_{\tau}= u\mid\mathcal{F}_{\infty})=\frac{ 1 }{W_{\tau}(\beta)} \, e^{\beta X_{\tau}(u)- (\beta^2/2+1) \tau } \ind{ u \in \mathcal{N}_{\tau}} .
\end{equation}

 \end{enumerate}
 \end{lemma}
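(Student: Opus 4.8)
The plan is to handle the two assertions separately: part~(i) is a Girsanov computation for the spine, and part~(ii) is obtained by averaging out the randomised spine given the genealogy of the BBM. Throughout write $D_{t}:=e^{\beta \Xi_{t}-(\frac{\beta^{2}}{2}+1)t}2^{n_{t}}$ for the $\P^{*}$-martingale noted above, so that $\dif(\mathbf{Q}^{\beta,*}_{\tau}\otimes\P)=D_{\tau}\,\dif\P^{*}$ and, since $\tau$ is bounded, $\E_{\P^{*}}[D_{\tau}]=D_{0}=1$.

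For part~(i) I would first reduce to a deterministic time. Fix $T$ with $\tau\le T$ a.s. Because under $\P^{*}$ the spine's Brownian motion $\Xi$ is independent of its branching clock $n$ (this is visible from the product form \eqref{eq-density-Pstar}), the $\Xi$-marginal of $\mathbf{Q}^{\beta,*}_{T}\otimes\P$ is the Wiener measure tilted by the exponential martingale $e^{\beta \Xi_{T}-\frac{\beta^{2}}{2}T}$, so Girsanov's theorem gives that $(\Xi_{t}-\beta t)_{t\le T}$ is a standard Brownian motion under $\mathbf{Q}^{\beta,*}_{T}\otimes\P$. To pass to the random time $\tau$, optional sampling gives $\E_{\P^{*}}[D_{T}\mid\mathcal{F}^{*}_{\tau}]=D_{\tau}$, hence $\mathbf{Q}^{\beta,*}_{\tau}\otimes\P$ and $\mathbf{Q}^{\beta,*}_{T}\otimes\P$ agree on $\mathcal{F}^{*}_{\tau}$, which yields drift $\beta$ up to time $\tau$; and after $\tau$ the density is frozen at the $\mathcal{F}^{*}_{\tau}$-measurable factor $D_{\tau}$, so the strong Markov property of $\Xi$ under $\P^{*}$ shows that, under $\mathbf{Q}^{\beta,*}_{\tau}\otimes\P$, $(\Xi_{\tau+s}-\Xi_{\tau})_{s\ge0}$ is a standard Brownian motion independent of $\mathcal{F}^{*}_{\tau}$. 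Splicing the two pieces gives that $(\Xi_{t}-\beta(t\wedge\tau))_{t\ge0}$ is a standard Brownian motion. (Alternatively one can argue in one step via L\'evy's characterisation, checking that $D_{t\wedge\tau}(\Xi_{t}-\beta(t\wedge\tau))$ and $D_{t\wedge\tau}\bigl((\Xi_{t}-\beta(t\wedge\tau))^{2}-t\bigr)$ are $\P^{*}$-martingales, the drift produced by the covariation $\beta\,d\langle\Xi\rangle_{t}$ cancelling the one produced by the shift.)

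For part~(ii), now $\tau$ is an $(\mathcal{F}_{t})$-stopping time, so the set $\mathcal{N}_{\tau}$, the positions $(X_{\tau}(u))_{u\in\mathcal{N}_{\tau}}$ and the genealogy of the BBM up to time $\tau$ are all $\mathcal{F}_{\infty}$-measurable, while by construction $\P^{*}$ restricted to $\mathcal{F}_{\infty}$ is just $\P$. It therefore suffices to compute $\E_{\P^{*}}[D_{\tau}\mid\mathcal{F}_{\infty}]$. Conditionally on $\mathcal{F}_{\infty}$ the only remaining randomness is the choice of the distinguished ray, and since $\tau$ does not depend on that ray one has $\P^{*}(w_{\tau}=u\mid\mathcal{F}_{\infty})=2^{-|u|}$ for every $u\in\mathcal{N}_{\tau}$; on $\{w_{\tau}=u\}$ one has $\Xi_{\tau}=X_{\tau}(u)$ and $n_{\tau}=|u|$, hence
\begin{equation*}
\E_{\P^{*}}\!\left[\ind{w_{\tau}=u}\,D_{\tau}\mid\mathcal{F}_{\infty}\right]=2^{-|u|}\,e^{\beta X_{\tau}(u)-(\frac{\beta^{2}}{2}+1)\tau}\,2^{|u|}\,\ind{u\in\mathcal{N}_{\tau}}=e^{\beta X_{\tau}(u)-(\frac{\beta^{2}}{2}+1)\tau}\,\ind{u\in\mathcal{N}_{\tau}}.
\end{equation*}
Summing over $u$ gives $\E_{\P^{*}}[D_{\tau}\mid\mathcal{F}_{\infty}]=W_{\tau}(\beta)$; taking the conditional expectation given $\mathcal{F}_{\infty}$ in $\dif(\mathbf{Q}^{\beta,*}_{\tau}\otimes\P)=D_{\tau}\,\dif\P^{*}$ then yields \eqref{eq-cop-11} (and in particular $\E_{\P}[W_{\tau}(\beta)]=1$), while dividing the displayed identity by $W_{\tau}(\beta)$ yields \eqref{eq-cop-22}.

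I expect the only genuinely delicate point to be the treatment of the random time in part~(i): because $\tau$ is a stopping time of the \emph{large} filtration $(\mathcal{F}^{*}_{t})$, it may couple the spine's path with its branching clock and with the side Galton--Watson trees, so Girsanov cannot be applied naively on the $\Xi$-coordinate alone --- which is why I would route through the deterministic-time reduction plus strong Markov (or through L\'evy's characterisation). In part~(ii) the corresponding point is exactly the hypothesis that $\tau$ be measurable with respect to $(\mathcal{F}_{t})$ and not $(\mathcal{F}^{*}_{t})$, which is what forces the conditional spine weights to be the unbiased $2^{-|u|}$; with that in hand part~(ii) is routine bookkeeping.
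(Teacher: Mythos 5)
Your proposal is correct and takes essentially the same route as the paper: part (ii) is exactly the paper's argument, namely computing $\E_{\P^{*}}\left[ e^{\beta \Xi_{\tau}-(\frac{\beta^{2}}{2}+1)\tau}2^{n_{\tau}} \mid \mathcal{F}_{\infty}\right]$ via the unbiased spine weights $\P^{*}(w_{\tau}=u\mid\mathcal{F}_{\infty})=2^{-|u|}\ind{u\in\mathcal{N}_{\tau}}$, which yields \eqref{eq-cop-11} and then \eqref{eq-cop-22}. For part (i) you spell out, via Girsanov at a deterministic horizon, optional sampling, and the strong Markov property, what the paper simply reads off from the description of the process under $\mathbf{Q}^{\beta,*}_{\tau}\otimes\P$; this is a legitimate and more detailed justification of the same fact, not a different approach.
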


 \subsection{Global minimum and martingale limits} 

In this section, we summarize the main results from \cite{CDM24}, which play a key role in the analysis of the large deviation behavior of the level set. 

 \begin{lemma}[{\cite[Theorem 1.3, Lemmas 3.1 and 3.4]{CDM24}}]\label{martingale limit and minimum} 
  Fix $\beta \in (0,\sqrt{2})$ and set $\kappa(\beta):= {2}/{\beta^2}$. Define $ \mathbf{I}^{\beta}_{t}  := \inf_{0 \leq r \leq t} \min_{u \in \mathcal{N}_{r}}  \{   (\frac{\beta^2}{2}+1)r - \beta X_{r}(u) \}$, and $ \mathbf{I}^{\beta} \equiv    \mathbf{I}^{\beta}_{\infty} := \inf_{t\geq 0}\mathbf{I}^{\beta}_{t} $.  
  \begin{enumerate}[(i)]
    \item For any $z \in \mathbb{R}$, we have 
    \begin{equation}\label{mintail}
      \mathbf{P}( \mathbf{I}^{\beta} \leq -z )  \leq  e^{ - \kappa(\beta) z}  . 
    \end{equation}
    \item There exists a continuous function  $C_{\eqref{martmin}}(\beta)$ on $\beta \in (0,\sqrt{2})$ such that 
    for $\delta= \frac{1}{2}$, $t \in [1,\infty]$ and $z \geq 0$, 
     \begin{equation}\label{martmin}
    \mathbf{E}[ W_{t}(\beta)^{\kappa(\beta) + \delta} ; \mathbf{I}^{\beta}_{t} \geq -z ] \leq C_{\eqref{martmin}}(\beta) e^{\delta z} .
     \end{equation} 
    \end{enumerate}
As a result of (i) and (ii), for any  $z>0$, we have 
\begin{align}
\mathbf{P}( W_{\infty}(\beta) \geq z  ) & \leq         \mathbf{P}( \mathbf{I}^{\beta} \leq -\ln z ) +  \mathbf{P}( W_{\infty}(\beta) \geq z , \mathbf{I}^{\beta} \geq - \ln z) \\
& \leq [1+ C_{\eqref{martmin}}(\beta)] z^{-\kappa(\beta)} .  \label{martail} 
     \end{align} 
  \end{lemma}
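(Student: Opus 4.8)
Parts (i) and (ii) are imported verbatim from \cite{CDM24}, so no argument is required for them; the only content to be supplied in the lemma is the tail estimate \eqref{martail}. The plan is a one‑parameter union bound. Recall first that $(W_t(\beta))_{t\ge0}$ is a nonnegative martingale, so $W_\infty(\beta)=\lim_{t\to\infty}W_t(\beta)$ exists almost surely, and that $r\mapsto\mathbf{I}^\beta_r$ is nonincreasing, so $\mathbf{I}^\beta=\mathbf{I}^\beta_\infty$ and $\{\mathbf{I}^\beta\ge-\ln z\}$ is precisely the $t=\infty$ instance of the event appearing in \eqref{martmin}.

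We may assume $z\ge1$, since for $0<z<1$ the right‑hand side of \eqref{martail} is already $\ge1$. First I would split according to the depth of the global minimum at the threshold $-\ln z$:
\[
  \mathbf{P}\bigl(W_\infty(\beta)\ge z\bigr)\ \le\ \mathbf{P}\bigl(\mathbf{I}^\beta\le-\ln z\bigr)+\mathbf{P}\bigl(W_\infty(\beta)\ge z,\ \mathbf{I}^\beta\ge-\ln z\bigr).
\]
The first term is bounded directly by part~(i): applying \eqref{mintail} with $z$ replaced by $\ln z\ge0$ gives $\mathbf{P}(\mathbf{I}^\beta\le-\ln z)\le e^{-\kappa(\beta)\ln z}=z^{-\kappa(\beta)}$. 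For the second term I would apply Markov's inequality at the exponent $\kappa(\beta)+\delta$ with $\delta=\tfrac12$, and then part~(ii) in its $t=\infty$ form, again with $z$ replaced by $\ln z$:
\[
  \mathbf{P}\bigl(W_\infty(\beta)\ge z,\ \mathbf{I}^\beta\ge-\ln z\bigr)\le z^{-(\kappa(\beta)+\delta)}\,\mathbf{E}\bigl(W_\infty(\beta)^{\kappa(\beta)+\delta};\,\mathbf{I}^\beta\ge-\ln z\bigr)\le z^{-(\kappa(\beta)+\delta)}C_{\eqref{martmin}}(\beta)e^{\delta\ln z}=C_{\eqref{martmin}}(\beta)z^{-\kappa(\beta)}.
\]
Adding the two bounds gives \eqref{martail}.

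There is no real obstacle at this level: the entire difficulty is already packaged in the cited results \eqref{mintail} and \eqref{martmin}, and what remains is a union bound plus one application of Markov's inequality — essentially the chain of inequalities already displayed in the statement. The only points that need a little care are that \eqref{martmin} is quoted for the full range $t\in[1,\infty]$, so it applies legitimately to the limiting object $W_\infty(\beta)$ on $\{\mathbf{I}^\beta_\infty\ge-\ln z\}$ without any extra Fatou argument; that the substitution $z\mapsto\ln z$ is informative only for $z\ge1$; and that the specific value $\delta=\tfrac12$ is irrelevant, since any admissible $\delta>0$ in \eqref{martmin} produces the same exponent $-\kappa(\beta)$.
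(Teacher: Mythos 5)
Your derivation of \eqref{martail} is correct and is exactly the chain the paper intends: apply \eqref{mintail} with $z$ replaced by $\ln z$ for the first term, and Markov's inequality at exponent $\kappa(\beta)+\tfrac12$ together with the $t=\infty$ case of \eqref{martmin} for the second, with the harmless restriction to $z\ge 1$. That part needs no further comment.

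The genuine gap is in your opening sentence, where you declare that parts (i) and (ii) are ``imported verbatim'' and require no argument. The tail bound \eqref{mintail} and the truncated moment bound \eqref{martmin} for a \emph{fixed} $\beta$ are indeed cited from \cite{CDM24}, but the assertion in (ii) that the constant can be chosen as a \emph{continuous function} $\beta\mapsto C_{\eqref{martmin}}(\beta)$ on $(0,\sqrt2)$ is not part of the citation; it is precisely what the paper's own proof of this lemma (Appendix A) establishes. There, the authors extract from the argument of \cite{CDM24} an explicit admissible constant $C_o(\beta)$, a product of a convergent series depending continuously on $\beta$ with the moments $\E[W_1(\beta)^{\kappa(\beta)+\frac12}]$ and $\E[W_\infty(\beta)^{\kappa(\beta)-\frac12}]$, and then dominate these by continuous functions of $\beta$ using Liu's recursive moment inequalities and the many-to-one formula. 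This continuity is not a cosmetic refinement: it is what later allows the bounds to be made uniform over $(x,a)\in\mathcal{A}^{\eqref{eq-admissable-xa}}_{\epsilon}$, e.g.\ in the definition \eqref{eq-rough-bound-cst} of $C_{\ref{eq-rough-bnd}}(\epsilon)$ via $\sup_{\theta\in J_\epsilon}C_{\eqref{martmin}}(\theta)$ and in the proof of \eqref{eq-cliam-2}. So your proposal proves the easy consequence \eqref{martail} but omits the one statement in the lemma that the paper actually has to prove.
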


  In the following we state the   exact asymptotic order of some probabilities concerned in the above lemma. 
  Similar to \eqref{eq-def-ss}, define 
    $  \s^{\beta }:=\argmin_{t>0}  \{ \min_{v\in \mathcal{N}_{t}}  [  ( \frac{\beta^2}{2}+1  )t- \beta X_{t}(v)  ] \} $.

  \begin{lemma}[{\cite[Theorem 1.1]{CDM24}}]\label{BRWCondMin}
    Fix $\beta \in (0,\sqrt{2})$. Then there exists a constant $C_{\mathbf{I}^{\beta}}\in(0,\infty)$ such that as $z\to \infty$, 
      \begin{equation}\label{tailM}
      \P (  \mathbf{I}^{\beta} \leq -z ) \sim C_{\mathbf{I}^{\beta}} \,  e^{ - \kappa(\beta) z}  . 
      \end{equation}
Moreover, conditionally on $\{\mathbf{I}^{\beta} \leq-z\}$,  we have the following convergence in distribution:  
\begin{equation*} 
\left(  \exp(\mathbf{I}^{\beta}) W_{\infty}(\beta), \, \mathbf{I}^{\beta} +z ,\, \sqrt{\frac{\psi'(\kappa(\beta))}{z}} \left[  \mathfrak{s}^{\beta}- \frac{z}{\psi'(\kappa(\beta))} \right]  \mid \mathbf{I}^{\beta} \leq-z  \right)  \Longrightarrow ( \mathcal{Z}, -\mathcal{E}, \mathcal{G})  
\end{equation*}  as $ z \to \infty$, 
where $\mathcal{Z},\mathcal{E},G$ are
independent random variables. Here, $\mathcal{Z}$ is positive, $\mathcal{E}$ follows an  exponential distribution with mean $\kappa(\beta)^{-1}$,   and $\mathcal{G}$ has a centered Gaussian distribution with variance $\psi''(\kappa(\beta))/\psi'(\kappa(\beta))^2$.  Moreover, the constant defined in \eqref{eq-C-W-infinity} satisfies $C_{W_{\infty}(\beta)} = C_{\mathbf{I}^{\beta}} \E[ \mathcal{Z}^{\kappa(\beta)}] $.
\end{lemma}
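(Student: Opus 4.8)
The plan is to turn $\{\mathbf{I}^\beta\le -z\}$ into a first-passage problem for the transformed BBM and to exploit the branching property at the first crossing, using the spinal change of measure of Lemma~\ref{changeofp} at the parameter $\kappa=\kappa(\beta)=2/\beta^2$. Since $\psi(\kappa)=0$, the process $N_t:=\sum_{u\in\mathcal N_t}e^{-\kappa V_t(u)}$ is a nonnegative mean-one martingale; it is not uniformly integrable, so it is used only up to finite times and stopping lines. Write $\tau_z:=\inf\{r\ge 0:\min_{u\in\mathcal N_r}V_r(u)\le -z\}$ and let $\mathcal B_z$ be the stopping line of the particles at which lineages first reach $-z$; by path-continuity each such particle sits exactly at $-z$. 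Optional stopping of $N$ along $\mathcal B_z\wedge t$ gives $1=e^{\kappa z}\,\E[\#\{\text{crossings before }t\}]+\E\big[\sum_{u\in\mathcal N_t:\,\text{uncrossed}}e^{-\kappa V_t(u)}\big]$; by the many-to-one formula the second term equals the probability that the $\kappa$-tilted single lineage $\widetilde V$ — a Brownian motion with $V$-drift $-\psi'(\kappa)<0$ and variance rate $\psi''(\kappa)$ — stays above $-z$ on $[0,t]$, and since $\widetilde V$ is transient to $-\infty$ this vanishes as $t\to\infty$. Hence $\E[\,|\mathcal B_z|\,]=e^{-\kappa z}$ exactly (and $\P(\mathbf{I}^\beta\le -z)\le e^{-\kappa z}$, recovering \eqref{mintail}).

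For the tail asymptotics, conditionally on $\mathcal B_z$ the subtrees grafted at its particles are i.i.d.\ copies of the transformed BBM issued from $-z$, so with $q(z):=\P(\mathbf{I}^\beta\le -z)$ one gets $q(z+z')=\E\big[\,1-(1-q(z'))^{|\mathcal B_z|}\,\big]\le \E[\,|\mathcal B_z|\,]\,q(z')=e^{-\kappa z}q(z')$. Thus $g(z):=e^{\kappa z}q(z)$ is non-increasing and $g(z)\downarrow C_{\mathbf{I}^\beta}$ for some $C_{\mathbf{I}^\beta}\in[0,1]$. Positivity of $C_{\mathbf{I}^\beta}$ — that is, $q(z)\gtrsim e^{-\kappa z}$ — follows from the polynomial tail $\P(W_\infty(\beta)\ge y)\sim C_{W_\infty(\beta)}y^{-\kappa}$ of \cite{Liu00} together with \eqref{martmin}: taking $y=e^{z+c}$ with $c$ large, \eqref{martmin} bounds $\P(W_\infty(\beta)\ge y,\ \mathbf{I}^\beta\ge -\ln y + c)$ by $\varepsilon\,y^{-\kappa}$ with $\varepsilon<\tfrac12 C_{W_\infty(\beta)}$, so $\P(\mathbf{I}^\beta\le -z)\ge\P(W_\infty(\beta)\ge y)-\varepsilon y^{-\kappa}\gtrsim e^{-\kappa z}$. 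As $g(z)\to C_{\mathbf{I}^\beta}$, this is exactly \eqref{tailM}.

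For the joint convergence I would work under $\P(\,\cdot\mid\mathbf{I}^\beta\le -z)$ with the spine picture: the conditioned process carries a distinguished lineage performing the deep excursion, and the three coordinates are read off from three well-separated portions of it. (i) The bulk of the descent, of length $\asymp z/\psi'(\kappa)$, produces via the inverse-Gaussian central limit theorem for its first passage to $-z$ the Gaussian fluctuation $\mathcal G$ of $\mathfrak{s}^\beta$ about $z/\psi'(\kappa)$, with variance $\psi''(\kappa)/\psi'(\kappa)^2$ after the stated normalization (the argmin time differs from the first-passage time by only $O(1)$, hence has the same fluctuations). (ii) The amount by which the running minimum undershoots $-z$ is a local quantity; by the strong Markov property at $\tau_z$ it converges in law to an exponential variable $\mathcal E$ of mean $\kappa^{-1}$ — the analogue of the Cram\'er--Lundberg overshoot, with rate $\kappa$ dictated by the tilting. (iii) Viewing the additive martingale from the minimal space-time point, $e^{\mathbf{I}^\beta}W_\infty(\beta)$ is the limit of a martingale fed by the independent subtrees branching off the ascending part of the spine, and converges to a positive $\mathcal Z$. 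Asymptotic independence of $(\mathcal Z,-\mathcal E,\mathcal G)$ is then a localization statement: $\mathcal G$ depends only on the spine trajectory on $[0,\mathfrak{s}^\beta]$ away from its endpoint, $\mathcal E$ only on an $O(1)$ window around $\mathfrak{s}^\beta$, and $\mathcal Z$ essentially only on the BBM emanating afterwards, with the gluing errors controlled by \eqref{martmin} and \eqref{martail}.

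The hard part will be (iii) and the independence: making precise the sense in which, after conditioning, the process ``forgets its descent and renews from the minimum'' requires a careful spine analysis beyond the first-passage time $\tau_z$, which is exactly where the non-uniform integrability of $N_t$ must be tamed — the moment estimate \eqref{martmin} is the quantitative tool for this — and then identifying the law of $\mathcal Z$ and decoupling it from $(\mathcal E,\mathcal G)$. By contrast, the tail estimate \eqref{tailM} is comparatively short given the inputs above.
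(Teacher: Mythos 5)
First, a point of comparison: the paper does not prove this lemma at all — it is imported verbatim as \cite[Theorem 1.1]{CDM24} — so there is no internal proof to measure your argument against; what follows assesses the proposal on its own terms.

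Your treatment of the first assertion \eqref{tailM} is a sound outline. The stopping-line identity $\E[|\mathcal{B}_z|]=e^{-\kappa z}$ (optional stopping of the additive martingale at the first-crossing line, with the uncrossed mass killed by the many-to-one computation for the $\kappa$-tilted lineage of drift $-\psi'(\kappa)$), the branching property along the stopping line giving $q(z+z')=\E[1-(1-q(z'))^{|\mathcal{B}_z|}]\le e^{-\kappa z}q(z')$ and hence monotonicity of $e^{\kappa z}q(z)$, and the positivity of the limit via the Liu tail of $W_\infty(\beta)$ combined with \eqref{martmin} — all of this is correct (you do implicitly rely on Chauvin's stopping-line branching property, which should be cited, and the argument recovers \eqref{mintail} as a by-product).

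The genuine gap is the entire second assertion, which is the part this paper actually uses in Section 4. (a) The convergence of $e^{\mathbf{I}^{\beta}}W_\infty(\beta)$ to a positive limit $\mathcal{Z}$ under the conditioning is asserted, not proven: it requires an explicit decomposition of $W_\infty(\beta)$ along the descending lineage and a quantitative control (this is exactly where \eqref{martmin} must be deployed), and you explicitly defer it. (b) The asymptotic independence of $(\mathcal{Z},-\mathcal{E},\mathcal{G})$ is described only as a ``localization statement'' with no estimates. (c) Your explanation of the exponential limit of $\mathbf{I}^{\beta}+z$ as a Cram\'er--Lundberg overshoot at $\tau_z$ is conceptually off: the paths are continuous, so there is no overshoot at the first passage; the exponential law comes from the ratio $q(z+w)/q(z)\to e^{-\kappa w}$, i.e.\ it is a corollary of \eqref{tailM} (its joint behavior with the other two coordinates still needs an argument). (d) For the Gaussian coordinate you correctly identify the inverse-Gaussian first-passage CLT for the spine, and the variance $\psi''(\kappa)/\psi'(\kappa)^2$ matches; but passing from $\P(\,\cdot\mid \mathbf{I}^{\beta}\le -z)$ to the spinal measure needs a change-of-measure identity at the hitting time (in the spirit of the paper's Lemma \ref{lem-cop33}), together with a proof that $\mathfrak{s}^{\beta}$ differs from that first-passage time by an $O_{\P}(1)$ amount under the conditioning; neither step is carried out. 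As it stands, the proposal establishes (modulo standard stopping-line facts) only the tail asymptotics; the joint conditional limit theorem remains unproven.
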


  \subsection{High points of BBM}
The following lemma is a slightly enhanced version of \eqref{eq-typical-GKS}.
As the result follows directly from the exact same argument used in  \cite[Theorem 1.1 and Proposition 1.3]{GKS18}, the proof is omitted. 
  
  \begin{lemma}\label{lem-enhanced-convergence}
   Fix $\beta \in [0,\sqrt{2})$. If $(\delta_{t})_{t >0}$ satisfies  $\delta_{t} \to 0$ as $t \to \infty$, then we have 
   \begin{equation} 
   \sup_{|\hat{\beta}-\beta| \leq \delta_{t}}  \left| \frac{\L_{t} ( \hat{\beta}  t) }{\mathbf{E} [\L_{t} (  \hat{\beta} t) ]} -  W_{\infty}(\beta) \right|  \overset{t \to \infty}{\longrightarrow} 0  \     \text{ a.s.} 
  \end{equation}
  Moreover, 
  there exists a constant  $c_{\beta}>0$ such that for large $t$ (depending only on $\beta$ and $\delta_{t}$),
    \begin{equation}\label{eq-exp-condexp-diff}
      \sup_{|\hat{\beta}-\beta| \leq \delta_{t}}   \mathbf{P}\left(  \left| \frac{\L_t (\hat{\beta} t)}{\mathbf{E}[\L_t (\hat{\beta} t)]}   -  \frac{\mathbf{E} ( \L_t (\hat{\beta} t) | \mathcal{F}_{r} ) }{\mathbf{E}[\L_t (\hat{\beta} t)]}   \right| \geq 1  \right) \leq 4 e^{- c_{\beta} r} . 
    \end{equation} 
  \end{lemma}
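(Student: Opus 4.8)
The plan is to reduce everything to the statements already proven in \cite{GKS18}, tracking the uniformity in $\hat\beta$ explicitly. The almost sure convergence in \cite[Theorem 1.1]{GKS18} is obtained through a first- and second-moment argument: one writes $\L_t(\hat\beta t)/\E[\L_t(\hat\beta t)]$ as a sum over particles, compares it with a suitable truncated additive martingale $W_t(\hat\beta)$ (keeping only particles whose ancestral trajectory stays below a slowly-growing barrier), and shows by a Borel--Cantelli argument along a discrete sequence of times together with a continuity estimate between consecutive times that the difference vanishes. First I would observe that every one of these moment estimates is a computation with the many-to-one formula and a Gaussian density, and that all the constants produced are continuous --- hence locally bounded --- functions of the drift parameter. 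Since $\hat\beta$ ranges over the compact interval $[\beta-\delta_t,\beta+\delta_t]\subset(-\sqrt2,\sqrt2)$ for large $t$, one may replace each constant $C(\hat\beta)$ by $\sup_{|\hat\beta-\beta|\le\delta_0}C(\hat\beta)<\infty$ for a fixed small $\delta_0$; the estimates then hold simultaneously for all admissible $\hat\beta$. Combined with the fact that $W_\infty(\hat\beta)\to W_\infty(\beta)$ as $\hat\beta\to\beta$ (a.s.\ continuity of the martingale limit in the parameter, which follows from the same $L^1$-type bounds), this yields the claimed uniform a.s.\ convergence.

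For the quantitative bound \eqref{eq-exp-condexp-diff}, I would use the standard decomposition into ancestral contributions at time $r$: conditionally on $\mathcal{F}_r$, $\L_t(\hat\beta t)$ is a sum over $v\in\mathcal{N}_r$ of independent copies $\L^{(v)}_{t-r}(\,\cdot\,)$ of a level-set count for a BBM of duration $t-r$ started from $X_r(v)$, so $\E(\L_t(\hat\beta t)\mid\mathcal{F}_r)=\sum_{v\in\mathcal{N}_r}\E[\L^{(v)}_{t-r}\mid\mathcal F_r]$. The difference $\L_t(\hat\beta t)-\E(\L_t(\hat\beta t)\mid\mathcal F_r)$ is then a sum of centered independent random variables given $\mathcal F_r$, and one controls its probability of being large by a truncated second-moment (Chebyshev) argument on the event that $W_r(\hat\beta)$ and $\#\mathcal{N}_r$ are not too big, handling the complementary event by the known tail bounds for the additive martingale and for $\#\mathcal{N}_r=\Theta(e^r)$. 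Carrying out the variance computation via many-to-one gives a bound of the form $C_\beta e^{-c_\beta r}\E[\L_t(\hat\beta t)]^2$ for the conditional variance on the good event, and the exponentially small probabilities $e^{-c_\beta r}$ for the bad events; taking the supremum over $\hat\beta$ as above (all constants locally uniform in the drift) produces the asserted $4e^{-c_\beta r}$, after absorbing numerical factors.

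The main obstacle --- really the only point requiring care --- is making the uniformity in $\hat\beta$ genuinely rigorous rather than asserted: one must check that the barrier truncation used in \cite{GKS18} can be chosen \emph{independently of} $\hat\beta$ (it can, since the barrier only needs to grow slower than any power of $t$ and faster than $\log t$, with no drift-dependence), and that the Gaussian density comparisons $\E[\L_t(\hat\beta t)]\asymp \E[\L_t(\beta t)]$ hold with constants bounded uniformly over $|\hat\beta-\beta|\le\delta_t$ --- which is immediate from the explicit formula $\E[\L_t(\hat\beta t)]=\tfrac{1+o(1)}{\hat\beta\sqrt{2\pi t}}e^{(1-\hat\beta^2/2)t}$ together with $|\hat\beta^2-\beta^2|\,t=|\hat\beta-\beta||\hat\beta+\beta|\,t\to 0$ when $\delta_t t\to 0$; if $\delta_t t\not\to 0$ one instead notes that the ratio $\E[\L_t(\hat\beta t)]/\E[\L_t(\beta t)]$ cancels in \eqref{eq-exp-condexp-diff} entirely, so no such comparison is even needed there. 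Everything else is a transcription of the \cite{GKS18} argument with the parameter carried along, which is why the authors omit the proof; the write-up above records why that omission is legitimate.
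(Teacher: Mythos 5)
Your overall route coincides with the paper's: the paper gives no proof at all, asserting that the lemma follows by rerunning \cite[Theorem 1.1 and Proposition 1.3]{GKS18} with the drift parameter carried along, and your points about uniformity (constants continuous, hence bounded, on a compact neighbourhood of $\beta$; local uniform convergence and continuity of $\hat\beta\mapsto W_\infty(\hat\beta)$, cf.\ \cite{Biggins92}; the normalization $\mathbf{E}[\L_t(\hat\beta t)]$ cancelling in \eqref{eq-exp-condexp-diff}) are exactly the observations that justify the omission.

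There is, however, a genuine gap in your plan for \eqref{eq-exp-condexp-diff} when $\beta\in(1,\sqrt{2})$ (the lemma is claimed for all $\beta\in[0,\sqrt{2})$). Conditioning on $\mathcal{F}_r$ and restricting to the event that $W_r(\hat\beta)$ and $\#\mathcal{N}_r$ are not too large does not make the conditional variance of $\L_t(\hat\beta t)$ small relative to $\mathbf{E}[\L_t(\hat\beta t)]^2$: for $\hat\beta>1$ the variance of each subtree count $\L^{(v)}_{t-r}(\hat\beta t-X_r(v))$ exceeds the square of its conditional mean by a factor growing exponentially in $t-r$ (the dominant second-moment contribution comes from an early branching inside the subtree followed by one unusually high particle), and this effect is not $\mathcal{F}_r$-measurable, so truncating at time $r$ cannot remove it. Hence your asserted bound ``conditional variance $\le C_\beta e^{-c_\beta r}\mathbf{E}[\L_t(\hat\beta t)]^2$ on the good event'' is false in that regime and the Chebyshev step fails. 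The correct device — used in \cite{GKS18} and reused verbatim in this paper's appendix proof of Lemma \ref{lem-overlap} — is to truncate \emph{inside} the subtrees: count only particles whose path on $[r,t]$ stays below the barrier $u\mapsto(\beta+\epsilon)u$. Then \cite[Lemma 2.3]{GKS18} shows the expected number of discarded particles is at most $e^{-\epsilon^2 r/4}\,\mathbf{E}[\L_t(\beta t)]$, and \cite[Lemma 2.4]{GKS18} provides the exponentially small (in $r$) second-moment bound for the truncated counts; these two estimates, not a plain conditional Chebyshev bound, are what produce the $e^{-c_\beta r}$ in \eqref{eq-exp-condexp-diff}. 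You do invoke this barrier in your sketch of the almost sure convergence, but you drop it precisely in the step where it is indispensable; with it reinstated, and the constants made uniform over $|\hat\beta-\beta|\le\delta_0$ exactly as you describe, your plan matches the intended proof.
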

   
Conditionally on the BBM,  
we choose two individuals $u^{1}_{t}$ and $u^{2}_{t}$ independently and uniformly at random from the level set $ \{ v \in \mathcal{N}_{t}: X_{t}(v) \geq \beta t  \}$.  We denote the (annealed) probability that the most recent common ancestor $u^{1}_{t}$ and $u^{2}_{t}$ are born after time $r$ by  
   \begin{equation}
     \mathtt{OL}_{t}(r, \beta):= \P   ( \mathscr{R}(u^{1}_{t} , u^{2}_{t}) \geq r ) 
   \end{equation}
   In the following Lemma we will show that $     \mathtt{OL}_{t}(r, \beta)$ converges to a limit $     \mathtt{OL}(r, \beta) $ as $t \to \infty$, and that the limit satisfies $ \mathtt{OL}(r, \beta) \to 0$ as $r \to \infty$. Jagannath studied  the overlap distribution of BRWs with Gaussian increments in \cite{Jagannath16}. (There is a slight difference between what we are concerned and the standard questions about overlap distribution).  
    Recently, Chataignier and Pain \cite{CP24} discovered an intriguing phase transition in the convergence rate of  $\mathtt{OL}_{t}(at, \beta) $ at $  \beta = \sqrt{2}/\sqrt{3}$ where particles ($u^1_{t} and u^{2}_{t}$) are chosen according to the Gibbs measure.

 \begin{lemma}[Overlap distribution]\label{lem-overlap}
   Fix $\beta \in [0,\sqrt{2})$. Assume that    $(\delta_{t})_{t \geq 0}$ satisfies $ \delta_{t}\to 0$ as $t \to \infty$.    Define  for each $r \geq 0$, 
   \begin{equation}
     \mathtt{OL} (r, \beta):=\E \left[ \sum_{v \in \mathcal{N}_{r} } \left( e^{\beta X_{r}(v) - (\frac{\beta^2}{2}+1) r} \frac{ W_{\infty}^{(v)}(\beta) }{W_{\infty}(\beta)} \right)^2  \right].
   \end{equation}
   Then   for any $r>0$, we have 
   \begin{equation}\label{eq-ol-conv-1}
    \lim_{t \to \infty}  \sup_{|\delta| \leq \delta_{t} } \left| \mathtt{OL}_{t}(r, \beta+\delta )  -   \mathtt{OL}(r, \beta)   \right|   \overset{t \to \infty}{\longrightarrow} 0. 
   \end{equation}
   Additionally, we have $ \lim_{r \to \infty}   \mathtt{OL}(r, \beta)  = 0$. 
   \end{lemma}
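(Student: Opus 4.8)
The plan is to identify the limit of $\mathtt{OL}_t(r,\beta+\delta)$ by conditioning on the genealogy up to the fixed time $r$ and then applying the enhanced law of large numbers, Lemma~\ref{lem-enhanced-convergence}, inside each of the finitely many subtrees issued at time $r$; afterwards I would establish $\mathtt{OL}(r,\beta)\to0$ by a monotonicity-and-moment argument. Since every particle of $\mathcal{N}_t$ has a unique ancestor in $\mathcal{N}_r$, the event $\{\mathscr{R}(u^1_t\wedge u^2_t)\geq r\}$ coincides with the event that $u^1_t$ and $u^2_t$ descend from a common $v\in\mathcal{N}_r$; writing $\L_t^{(v)}$ for the number of descendants of $v$ lying above level $(\beta+\delta)t$ at time $t$, this gives, conditionally on $\mathcal{F}_t$,
\[
  \P\bigl(\mathscr{R}(u^1_t\wedge u^2_t)\geq r\mid\mathcal{F}_t\bigr)=\sum_{v\in\mathcal{N}_r}\Bigl(\frac{\L_t^{(v)}}{\L_t((\beta+\delta)t)}\Bigr)^2 ,
\]
so $\mathtt{OL}_t(r,\beta+\delta)$ is the expectation of this sum, and tracking $|\delta|\leq\delta_t$ uniformly from here on will produce the claimed uniform convergence \eqref{eq-ol-conv-1}.

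Next, and this is where the real work lies, I would pass to the limit in each term. By the branching property and the memorylessness of the lifetimes, conditionally on $\mathcal{F}_r$ the subtrees rooted at the $v\in\mathcal{N}_r$ are independent BBMs, and $\L_t^{(v)}$ is the size of the level set of the $v$-th BBM, run for time $t-r$, above level $(\beta+\delta)t-X_r(v)$, i.e.\ at effective slope $\hat\beta_{v,t}:=(\beta+\delta)+\frac{(\beta+\delta)r-X_r(v)}{t-r}$. Since $\mathcal{N}_r$ is finite, almost surely $\sup_{v\in\mathcal{N}_r}|\hat\beta_{v,t}-\beta|\leq\delta_t+t^{-1/3}$ for all large $t$; sandwiching this random, $\delta$-dependent slope between the deterministic sequence $\delta_t+t^{-1/3}\to0$ then permits applying Lemma~\ref{lem-enhanced-convergence} to the $v$-th BBM, yielding $\L_t^{(v)}/\E[\L_{t-r}((\beta+\delta)t-X_r(v))]\to W_\infty^{(v)}(\beta)$ a.s., uniformly over $|\delta|\leq\delta_t$. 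The many-to-one formula turns the deterministic ratio $\E[\L_{t-r}((\beta+\delta)t-X_r(v))]/\E[\L_t((\beta+\delta)t)]$ into a ratio of Gaussian tail probabilities, which converges, uniformly in $\delta$ and over the finite set $\mathcal{N}_r$, to $e^{\beta X_r(v)-(\frac{\beta^2}{2}+1)r}$, whereas $\L_t((\beta+\delta)t)/\E[\L_t((\beta+\delta)t)]\to W_\infty(\beta)\in(0,\infty)$ a.s.\ by Lemma~\ref{lem-enhanced-convergence}. Combining the three, $\L_t^{(v)}/\L_t((\beta+\delta)t)\to p_v(r):=e^{\beta X_r(v)-(\frac{\beta^2}{2}+1)r}W_\infty^{(v)}(\beta)/W_\infty(\beta)$ a.s.\ and uniformly in $\delta$; squaring, summing over $\mathcal{N}_r$, and taking expectations (the summands lie in $[0,1]$, so dominated convergence applies and the supremum over $|\delta|\leq\delta_t$ may be pulled inside the expectation) gives $\sup_{|\delta|\leq\delta_t}|\mathtt{OL}_t(r,\beta+\delta)-\mathtt{OL}(r,\beta)|\to0$.

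For the decay $\mathtt{OL}(r,\beta)\to0$ as $r\to\infty$, I would first observe that the branching property of the additive martingale gives $\sum_{v\in\mathcal{N}_r}p_v(r)=1$ and, for $r<r'$, $p_v(r)=\sum_{w\in\mathcal{N}_{r'},\,w\succeq v}p_w(r')$, whence $\sum_v p_v(r)^2\geq\sum_w p_w(r')^2$; thus $r\mapsto\mathtt{OL}(r,\beta)=\E[\sum_v p_v(r)^2]$ is non-increasing and it suffices to let $r=n\to\infty$ along the integers. Using $\sum_v p_v(n)^2\leq\max_v p_v(n)=W_\infty(\beta)^{-1}\max_{v\in\mathcal{N}_n}e^{\beta X_n(v)-(\frac{\beta^2}{2}+1)n}W_\infty^{(v)}(\beta)$, it is enough to show this last maximum tends to $0$ a.s. Picking $q\in(1,\kappa(\beta))$ with $\kappa(\beta)=2/\beta^2$, so that $m_q:=\E[W_\infty(\beta)^q]<\infty$ by~\eqref{martail}, bounding the $q$-th power of the maximum by the corresponding sum and applying the many-to-one formula conditionally on $\mathcal{N}_n$ yields
\[
  \E\bigl[\bigl(\max_{v\in\mathcal{N}_n}e^{\beta X_n(v)-(\frac{\beta^2}{2}+1)n}W_\infty^{(v)}(\beta)\bigr)^q\bigr]\leq m_q\,e^{-(q-1)(1-q\beta^2/2)\,n},
\]
which is summable in $n$ because $1<q<2/\beta^2$; Borel--Cantelli together with $0<W_\infty(\beta)<\infty$ a.s.\ then gives $\max_v p_v(n)\to0$ a.s., and dominated convergence finishes the proof.

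The step I expect to be the main obstacle is the middle one: reconciling the $\mathcal{F}_r$-measurable, $\delta$-dependent effective slope $\hat\beta_{v,t}$ with the fixed deterministic-sequence hypothesis of Lemma~\ref{lem-enhanced-convergence}, and propagating the uniformity over $|\delta|\leq\delta_t$ simultaneously through the martingale convergence, the Gaussian ratio of expectations, and the final division. Once that bookkeeping is in place the remaining estimates are routine.
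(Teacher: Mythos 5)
Your argument is correct. For the convergence \eqref{eq-ol-conv-1} you follow essentially the same route as the paper: condition at time $r$, apply Lemma \ref{lem-enhanced-convergence} inside each of the finitely many subtrees (sandwiching the random effective slope by a deterministic vanishing sequence, exactly the bookkeeping the paper performs implicitly), identify the deterministic ratio of expectations through the many-to-one formula, and conclude by bounded convergence. Where you genuinely depart from the paper is the proof that $\mathtt{OL}(r,\beta)\to 0$: the paper returns to the time-$t$ level sets, introduces the barrier event that particles stay below $(\beta+\epsilon)t'$ for $t'\in[r,t]$, and combines the first- and second-moment estimates of \cite[Lemmas 2.3 and 2.4]{GKS18} with a lower-deviation term $\P\left(\L_t(\beta t)\le\lambda\,\E[\L_t(\beta t)]\right)$ and the fact $\P(W_\infty(\beta)=0)=0$; you instead work directly with the limiting weights $p_v(r)$, prove pathwise monotonicity of $\sum_v p_v(r)^2$ via the recursive decomposition of $W_\infty(\beta)$ (which reduces matters to integer times), bound the sum by $\max_v p_v(n)$, and kill the maximum by a $q$-th-moment, many-to-one and Borel--Cantelli estimate with $1<q<2/\beta^2$, whose exponent $-(q-1)\left(1-\tfrac{q\beta^2}{2}\right)n$ is indeed negative for such $q$. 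Your route is more self-contained --- it bypasses the GKS18 moment lemmas and the barrier construction --- at the cost of needing $\E[W_\infty(\beta)^q]<\infty$; this follows from \eqref{martail} for $\beta\in(0,\sqrt{2})$, and for the boundary case $\beta=0$ (where that inequality is not stated) $W_\infty(0)$ has all moments, so the argument goes through unchanged.
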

  
\subsection{Useful inequalities}

The following lemma is borrowed from  \cite{HN04}, by which  the large deviation probabilities for heavy-tailed random walks was studied.  In our context, this lemma can be effectively combined with the bound \eqref{martmin}. 

\begin{lemma}[{\cite[Lemma 3.2]{HN04}}]\label{lem-one-big-jump}
Let $S_{n}=\sum_{i=1}^{n} X_{i}$, where $(X_{i})_{i=1}^{n}$ are independent nonnegative r.v.'s with finite expectation.  Let $\mu_{n}:= \E(S_{n})$. Then for any $t>0$, and $ \lambda>0$, we have 
\begin{equation}
  \P\left(S_{n}> t\right) \leq  \sum_{i=1}^{n} \P\left(X_{i}> \frac{t}{\lambda} \right)+\left(\frac{e \mu_{n}}{t}\right)^{\lambda}.
\end{equation} 
\end{lemma}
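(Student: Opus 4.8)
The plan is to bound $\P(S_n > t)$ by splitting according to whether any single summand is ``large'' (bigger than $t/\lambda$) or all summands are ``small.'' Let $A := \bigcup_{i=1}^n \{X_i > t/\lambda\}$ be the event that at least one big jump occurs. On $A^c$, every $X_i$ is truncated at level $t/\lambda$, so write $\widehat{X}_i := X_i \wedge (t/\lambda)$ and $\widehat{S}_n := \sum_{i=1}^n \widehat{X}_i$. Then $\{S_n > t\} \cap A^c \subseteq \{\widehat{S}_n > t\}$, so a union bound gives
\begin{equation}
\P(S_n > t) \leq \P(A) + \P(\widehat{S}_n > t) \leq \sum_{i=1}^n \P\!\left(X_i > \frac{t}{\lambda}\right) + \P(\widehat{S}_n > t).
\end{equation}
It remains to show $\P(\widehat{S}_n > t) \leq (e\mu_n/t)^\lambda$.

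For the truncated sum I would use an exponential Markov (Chernoff) bound with a well-chosen parameter. For any $\gamma > 0$,
\begin{equation}
\P(\widehat{S}_n > t) \leq e^{-\gamma t}\,\E\!\left[e^{\gamma \widehat{S}_n}\right] = e^{-\gamma t}\prod_{i=1}^n \E\!\left[e^{\gamma \widehat{X}_i}\right],
\end{equation}
using independence. The key elementary inequality is that for a nonnegative random variable $Y$ bounded by $t/\lambda$ and any $\gamma$ with $\gamma t/\lambda \leq$ (something controlled), one has $\E[e^{\gamma Y}] \leq \exp(c(\gamma)\,\E[Y])$ for an appropriate $c(\gamma)$; concretely, since $e^{\gamma y} - 1 \leq \frac{e^{\gamma t/\lambda}-1}{t/\lambda}\, y$ by convexity on $[0, t/\lambda]$, we get $\E[e^{\gamma \widehat{X}_i}] \leq 1 + \frac{\lambda(e^{\gamma t/\lambda}-1)}{t}\,\E[\widehat{X}_i] \leq \exp\!\big(\frac{\lambda(e^{\gamma t/\lambda}-1)}{t}\,\E[\widehat{X}_i]\big)$, using $1+u \leq e^u$. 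Since $\E[\widehat{X}_i] \leq \E[X_i]$ and $\sum_i \E[X_i] = \mu_n$, multiplying over $i$ yields
\begin{equation}
\P(\widehat{S}_n > t) \leq \exp\!\left(-\gamma t + \frac{\lambda \mu_n}{t}\big(e^{\gamma t/\lambda}-1\big)\right).
\end{equation}
Now choose $\gamma$ so that $\gamma t/\lambda = \ln(t/\mu_n)$, i.e. $e^{\gamma t/\lambda} = t/\mu_n$ (assuming $t > \mu_n$; otherwise $(e\mu_n/t)^\lambda \geq 1$ and the bound is trivial). Then the exponent becomes $-\lambda \ln(t/\mu_n) + \lambda\mu_n/t \cdot (t/\mu_n - 1) = -\lambda\ln(t/\mu_n) + \lambda - \lambda\mu_n/t \leq \lambda - \lambda\ln(t/\mu_n) = \lambda\ln(e\mu_n/t)$, which gives exactly $\P(\widehat{S}_n > t) \leq (e\mu_n/t)^\lambda$, completing the proof.

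The only real subtlety is the truncation step: one must make sure the event $\{S_n > t, A^c\}$ is genuinely contained in $\{\widehat{S}_n > t\}$ (which is immediate since on $A^c$ we have $X_i = \widehat{X}_i$ for all $i$), and that the convexity bound for $\E[e^{\gamma\widehat{X}_i}]$ is applied with the correct endpoint $t/\lambda$. Everything else is a routine Chernoff optimization; the choice of $\gamma$ matching $e^{\gamma t/\lambda} = t/\mu_n$ is exactly what produces the clean constant $e$ in $(e\mu_n/t)^\lambda$. I would also remark that the trivial case $t \leq \mu_n$ (equivalently $e\mu_n/t \geq 1$) need not even be discussed separately since the right-hand side then exceeds $1$.
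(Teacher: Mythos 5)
Your proposal is correct and follows essentially the same route as the paper's own proof: truncate at $t/\lambda$, apply the exponential Markov inequality with the convexity bound $e^{\gamma y}\leq 1+\frac{e^{\gamma t/\lambda}-1}{t/\lambda}\,y$ on $[0,t/\lambda]$, and optimize the Chernoff parameter. The only cosmetic differences are your choice $e^{\gamma t/\lambda}=t/\mu_n$ (with the trivial case $t\leq\mu_n$ handled separately) versus the paper's $e^{ht/\lambda}=1+t/\mu_n$, both of which yield the stated bound.
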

      
The following are classical Chernoff bounds. 
The first inequality can also be deduced directly from Lemma \ref{lem-one-big-jump}. For the second one, see for example, \cite[Theorem 2.4.7]{Roc24}.

\begin{lemma}[Chernoff bounds]\label{Chernoff-bounds}
  Let $X_1, \cdots, X_n$ be independent $\{0,1\}$-valued random variables  with $\P\left(X_i=1\right)=p_i$. Let $X=\sum_{i=1}^n X_i$ and $\mu=\mathbf{E}[X]$. Then for $t>\mu$,  
 \begin{equation}
   \P(X > t) \leq  \left( \frac{e\mu}{t}  \right)^{t}. 
 \end{equation} 
 and for $\delta \in (0,1)$
\begin{equation}
   \P(X \leq(1-\delta) \mu) \leq \mathrm{e}^{-\mu \delta^2 / 2}.
\end{equation}
\end{lemma}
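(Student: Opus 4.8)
The plan is to use the classical exponential Markov (Chernoff) method, handling the two tails separately; neither is deep, and for the upper tail I would actually invoke Lemma \ref{lem-one-big-jump} directly.

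For the upper bound, since the $X_i$ take values in $\{0,1\}$ we have $\P(X_i>1)=0$, so applying Lemma \ref{lem-one-big-jump} with $\lambda=t$ (so that $t/\lambda=1$) annihilates the sum $\sum_{i=1}^n\P(X_i>t/\lambda)$ and leaves exactly $(e\mu/t)^t$. Alternatively, for a self-contained argument, for every $s>0$ Markov's inequality gives $\P(X>t)\le e^{-st}\,\mathbf{E}[e^{sX}]=e^{-st}\prod_{i=1}^n\bigl(1+p_i(e^s-1)\bigr)\le e^{-st}\exp\{\mu(e^s-1)\}$, using $1+u\le e^u$ and $\sum_i p_i=\mu$; optimizing by $e^s=t/\mu$ (legitimate since $t>\mu$) yields $\P(X>t)\le(\mu/t)^t e^{t-\mu}\le(e\mu/t)^t$.

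For the lower bound, I would run the same computation with a negative exponent: for $s>0$, $\P(X\le(1-\delta)\mu)=\P(e^{-sX}\ge e^{-s(1-\delta)\mu})\le e^{s(1-\delta)\mu}\prod_{i=1}^n\bigl(1+p_i(e^{-s}-1)\bigr)\le\exp\{s(1-\delta)\mu+\mu(e^{-s}-1)\}$. Choosing $s=-\ln(1-\delta)>0$ turns the exponent into $\mu\bigl[-(1-\delta)\ln(1-\delta)-\delta\bigr]$, and it remains only to verify the elementary scalar inequality $-(1-\delta)\ln(1-\delta)-\delta\le-\delta^2/2$ on $\delta\in(0,1)$, which follows from $(1-\delta)\ln(1-\delta)=-\delta+\tfrac{\delta^2}{2}+\sum_{k\ge3}\tfrac{\delta^k}{k(k-1)}$ with all later terms nonnegative. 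This gives the stated $e^{-\mu\delta^2/2}$; one may instead simply cite \cite[Theorem 2.4.7]{Roc24}.

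Since both tails collapse to one-line convexity estimates, there is no genuine obstacle; the only point that needs a moment's attention is the elementary inequality $-(1-\delta)\ln(1-\delta)-\delta\le-\delta^2/2$ used to cast the lower-tail bound into its clean Gaussian form.
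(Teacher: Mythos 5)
Your proposal is correct. The paper offers no written proof of this lemma: it merely remarks that the first inequality follows from Lemma \ref{lem-one-big-jump} (exactly your application with $\lambda=t$, so that $\P(X_i>t/\lambda)=\P(X_i>1)=0$) and cites \cite[Theorem 2.4.7]{Roc24} for the second, so your primary route coincides with the paper's intended one, and your self-contained exponential-Markov computations for both tails — including the scalar inequality $-(1-\delta)\ln(1-\delta)-\delta\le-\delta^2/2$ — are the standard arguments and check out.
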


\section{The Optimal Strategy}
\label{sec-OptS}
This section is devoted to proving that the optimal strategy to make the size of the level set $\L_{t}(xt) $ exceed $ e^{at}/\sqrt{t}$ is to drive the global minimum $\mathbf{I}$ of the linearly transformed BBM $(V_{t}(u): u \in \mathcal{N}_{t})$ towards $- \psi'(\kappa) s $. We will prove a slightly stronger result that holds uniformly for $(x,a)$. For small $\epsilon >0$, let us introduce
\begin{equation}\label{eq-admissable-xa}
 \mathcal{A}^{\eqref{eq-admissable-xa}}_{\epsilon}:= \left\{ (x,a): x  \in [2\sqrt{\epsilon}, \epsilon^{-1}], (1-{x^2}/{2})_{+}+\epsilon   \leq a \leq 1-\epsilon \right\}. 
\end{equation} 
 
\begin{proposition}\label{prop-up-to-constant} 
 Given $\epsilon \in (0,1/4)$.  There exists  constant $C_{\ref{eq-up2cons}}(\epsilon)$ depending only on $\epsilon$ such that for any $(x,a) \in   \mathcal{A}^{\eqref{eq-admissable-xa}}_{\epsilon}$, $t \geq 1$, and  $0 \leq z \leq \ln^{3/2}(t)$,  
  \begin{equation}\label{eq-up2cons}
    \mathbf{P}\left(\L_{t} (x t)  \geq \frac{1}{\sqrt{t}} e^{a   t} , \left| \mathbf{I} +   \psi'(\kappa)  s \right| >z \right) \leq C_{\ref{eq-up2cons}}(\epsilon) \, e^{-I(x,a)t }  \, e^{-\frac{1}{2}z}. 
  \end{equation}
  As a result, we have $ \mathbf{P}(\L_{t} (x t)  \geq \frac{1}{\sqrt{t}} e^{a   t} ) \leq  C_{\ref{eq-up2cons}}(\epsilon) \, e^{-I(x,a)t } $ for any $  (x,a) \in \mathcal{A}^{\eqref{eq-admissable-xa}}_{\epsilon}$.
\end{proposition}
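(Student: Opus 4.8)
The plan is to discard the level set on the event $\{\mathbf I\le-\psi'(\kappa)s-z\}$ and to work conditionally on $\mathcal F_s$, $s:=pt$, on the event $\{\mathbf I>-\psi'(\kappa)s+z\}$. Compactness of $\mathcal A^{\eqref{eq-admissable-xa}}_\epsilon$ and continuity of $(x,a)\mapsto(\theta,p,b,\kappa)$ (with $\theta$ staying in a compact subinterval of $(0,\sqrt2)$) make every model constant — $C_{\eqref{martmin}}(\theta)$, $C_{W_\infty(\theta)}$, etc.\ — bounded by a function of $\epsilon$, which I use below without comment. Call $\{\mathbf I\le-\psi'(\kappa)s-z\}$ the \emph{deep} event and $\{\mathbf I>-\psi'(\kappa)s+z\}$ the \emph{shallow} event; their union is the event in \eqref{eq-up2cons}. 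On the deep event one forgets $\L_t(xt)$ altogether: since $\mathbf I=\mathbf I^\theta$, the tail bound \eqref{mintail} together with the elementary identity $\kappa\psi'(\kappa)p=I(x,a)$ (from \eqref{def-ka-dpsika}, \eqref{eq-def-theta-p-b}) gives $\P(\mathbf I\le-\psi'(\kappa)s-z)\le e^{-\kappa(\psi'(\kappa)s+z)}=e^{-I(x,a)t}e^{-\kappa z}\le e^{-I(x,a)t}e^{-z/2}$, using $\kappa=2/\theta^2>1$.

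On the shallow event every particle stays below the line $\mathsf L_t(\cdot)-z/\theta$; since $\mathsf L_t$ is the tangent to $\mathsf F_t$ at $(s,bps)$ and $\theta b=2$, $s=pt$, this forces $X_s(v)\le m:=bpt-z/\theta$ for all $v\in\mathcal N_s$ and, for each $v$, forces the linearly transformed sub-BBM rooted at $v$ to have global minimum over $[s,t]$ at least $-(\theta\delta_v-z)$, where $\delta_v:=bpt-X_s(v)\ge z/\theta$. (If $z>\psi'(\kappa)s$ the shallow event is empty since $\mathbf I\le0$ always, so assume $z\le\psi'(\kappa)s$.) Conditionally on $\mathcal F_s$, $\L_t(xt)=\sum_{v\in\mathcal N_s}N_v$ with $N_v:=\L_{t-s}^{(v)}(xt-X_s(v))$ independent, and Lemma \ref{lem-one-big-jump} with exponent $\lambda:=\kappa+\tfrac12$ bounds $\P(\L_t(xt)\ge\tfrac1{\sqrt t}e^{at}\mid\mathcal F_s)$ by $\sum_{v\in\mathcal N_s}\P(N_v\ge\tfrac1{\lambda\sqrt t}e^{at}\mid\mathcal F_s)$ plus $(e\sqrt t\,\E[\L_t(xt)\mid\mathcal F_s]/e^{at})^{\lambda}$.

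For the ``no big jump'' term I would bound $\E[\L_t(xt)\mid\mathcal F_s]\le\frac C{\sqrt t}e^{(1-p)t}\sum_{v\in\mathcal N_s}e^{-(xt-X_s(v))^2/(2(1-p)t)}$ by the many-to-one formula and the Gaussian tail (the $t^{-1/2}$ is genuine since $xt-X_s(v)=\Theta(t)$ on the shallow event), bound the exponential on $w\le m$ by its tangent exponential at $m$ to turn the sum into a multiple of $W_s(\mu)$ with $\mu:=(xt-m)/((1-p)t)$, and then use the identity $x^2(1-a-p)=2(1-a)^2(1-p)$ — equivalently $\psi'(\kappa)(1-p)=a$ — which gives $\mu=\theta+O(z/t)$, cancels the two stray $\sqrt t$'s, and collapses the remaining exponent to $at-\psi'(\kappa)s-\Theta(z^2/t)$; feeding the shallow constraint $\mathbf I^\theta_s\ge-(\psi'(\kappa)s-z)$ into \eqref{martmin} ($\beta=\theta$, $\delta=\tfrac12$) then bounds this term in expectation by $C\,e^{-(\kappa+1/2)\psi'(\kappa)s}e^{(\psi'(\kappa)s-z)/2}=C\,e^{-I(x,a)t}e^{-z/2}$. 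For the ``one big jump'' term I would apply the polynomial-free rough large-deviation bound of Lemma \ref{lem-rough-bnd} to each sub-BBM, which on the shallow event carries the global-minimum constraint above and whose ``$a$-parameter'' is $a/(1-p)$ (chosen exactly so that the deterministic part of the threshold is $e^{at}$); using $\psi'(\kappa)(1-p)=a$, $\theta b=2$ and $\kappa\theta=b$ this reduces the $v$-th term to at most $C\,e^{-\kappa\theta\delta_v}e^{-z/2}$, and summing via the many-to-one identity $\sum_{v\in\mathcal N_s}e^{-\kappa\theta\delta_v}=W_s(\kappa\theta)e^{(1-\kappa)s}$ together with $\E[W_s(\kappa\theta)]=1$ and $(1-\kappa)p=-I(x,a)$ yields $\le C\,e^{-I(x,a)t}e^{-z/2}$. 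Adding the deep, bulk, and one-big-jump contributions proves \eqref{eq-up2cons}, and $z=0$ gives the concluding inequality.

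The main obstacle is the shallow part, and it is of two kinds. The bookkeeping: one must verify that the web of algebraic relations among $\theta,p,b,\kappa,\psi'(\kappa)$ forces \emph{every} exponent to collapse to exactly $-I(x,a)t$ (with only harmless $\Theta(z^2/t)$ errors), and in particular that $e^{at}/\sqrt t$ is precisely the normalization for which the two $\sqrt t$'s cancel — with any other normalization a spurious power of $t$ survives. The conceptual point: shallowness must be used not merely as an upper bound on the time-$s$ positions but as a genuine global-minimum constraint on \emph{each} sub-BBM over $[s,t]$; it is this constraint, pushed through \eqref{martmin} (once globally at time $s$ for the bulk term and once inside each sub-BBM for the one-big-jump term), that produces the gain $e^{-z/2}$, while the fact that Lemma \ref{lem-rough-bnd} is free of spurious powers of $t$ after summing over $\mathcal N_s$ is what keeps the estimate clean. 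Establishing Lemma \ref{lem-rough-bnd} itself — through the tail estimates \eqref{martail}--\eqref{tailM} for $W_\infty(\theta)$ — is the other place where real work is hidden.
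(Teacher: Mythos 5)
Your overall architecture matches the paper's: split off the deep event via \eqref{mintail}, condition on $\mathcal F_{s}$ with $s=pt$, observe that shallowness forces $\Delta_s(v)\ge z/\theta$ and $\mathbf I^{(v)}\ge-\theta\Delta_s(v)+z$ for every $v\in\mathcal N_{s}$, apply Lemma \ref{lem-one-big-jump} with $\lambda=\kappa+\tfrac12$, and control the ``no big jump'' term by a power of the additive martingale restricted to $\{\mathbf I_{s}\ge-\psi'(\kappa)s+z\}$ via \eqref{martmin}. That bulk estimate goes through, up to a point you should track explicitly: you apply \eqref{martmin} with $\beta=\theta$ to a power of $W_{s}(\mu)$ with $\mu=\theta+O(z/t)\neq\theta$; on the shallow event $W_{s}(\mu)\le e^{cz}W_{s}(\theta)$ and this $e^{cz}$ is exactly offset by the extra factor your tangent-at-$m$ bound produces, but if the exponent really ``collapsed to $at-\psi'(\kappa)s-\Theta(z^2/t)$'' as you state, a harmful $e^{+cz}$ would survive after raising to the power $\kappa+\tfrac12$.

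The genuine gap is the ``one big jump'' term. Lemma \ref{lem-rough-bnd} is not polynomial-free — the bound \eqref{eq-rough-bnd} carries the factor $t^{x^2/(4(1-a)^2)}=t^{1/\theta^2}$ — and it contains no global-minimum constraint, so it cannot yield the bound $Ce^{-\kappa\theta\delta_v}e^{-z/2}$ you assert for each $v$. For the dominant particles, those with $\delta_v\lesssim\sqrt{t}$ (near the optimal strategy), applying Lemma \ref{lem-rough-bnd} to the sub-BBM and summing by many-to-one leaves $t^{1/\theta^2}e^{-I(x,a)t}$ with no $e^{-z/2}$ at all, so \eqref{eq-up2cons}, which must be uniform in $t$, does not follow. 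The $z$-gain has to come from using the constraint $\mathbf I^{(v)}\ge-\theta\delta_v+z$ \emph{inside} the sub-BBM estimate; this is precisely Lemma \ref{Moderate-deviation}(ii) of the paper (a moderate-deviation bound for $\L_{t'}((\theta+\delta)t')/\E\L_{t'}((\theta+\delta)t')$ restricted to $\{\mathbf I^{\theta}\ge\cdot\}$, built from Lemma \ref{lem-enhanced-convergence} and \eqref{martmin}), which simultaneously removes the polynomial and produces $e^{-z/2}$, and which your proposal neither proves nor invokes. The paper also splits on $\Delta_s(v)$: the rough bound \eqref{eq-rough-bnd} is used only when $\Delta_s(v)\ge\sqrt{s}\ln s$, where the quadratic Gaussian gain $e^{-\Theta(\ln^2 t)}$ (the Laplace-method computation with $H$) beats $t^{1/\theta^2}$, and a crude many-to-one bound handles $X_s(v)\le-2bs$ so the rescaled parameters stay in $\widetilde{\mathcal A}^{\eqref{eq-admissable-xa2}}$. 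Without these ingredients the shallow one-big-jump estimate, and hence the proposition, is not established.
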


\subsection{The road to Proposition \ref{prop-up-to-constant} }\label{sec-Up-to-cst}
Our first observation is that, 
the large deviation probability of $\L_{t}(x) \geq e^{a t}/\sqrt{t}$ can be upper bounded by using the tail probability of the additive martingale $W_{\infty}(\theta)$ with $\theta= {2(1-a)}/{x}$.  We first introduce a set which is slightly larger than \eqref{eq-admissable-xa}, where  the case  $a= 1-{x^2}/{2} >0$ is allowed. For $\epsilon>0$, set  
\begin{equation}
 \widetilde{\mathcal{A}}^{\eqref{eq-admissable-xa2}}_{\epsilon}
 := \{ (x,a): x  \in [2\sqrt{\epsilon}, \epsilon^{-1}], \max\{ 1-{x^2}/{2} , \epsilon  \}  \leq a \leq 1-\epsilon\} .\label{eq-admissable-xa2}
\end{equation}

\begin{lemma}\label{lem-rough-bnd}
  Given $\epsilon \in (0,1/4)$. Then there exists a constant $C_{\ref{eq-rough-bnd}}(\epsilon)$ (see  \eqref{eq-rough-bound-cst} for definition) depending only on $\epsilon$ such that  for any $(x,a) \in\widetilde{\mathcal{A}}^{\eqref{eq-admissable-xa2}}_{\epsilon} $ and $t \geq 1$   
   \begin{equation}\label{eq-rough-bnd}
     \mathbf{P}\left(\L_t (x t) \geq \frac{1}{\sqrt{t}} e^{ a t }\right)\leq   C_{\ref{eq-rough-bnd}}(\epsilon) \exp \left\{ -I(x,a) t +\frac{  x^{2}}{4(1-a)^2} \ln t   \right\}. 
   \end{equation}
 \end{lemma}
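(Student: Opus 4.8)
The strategy is to split according to the position of the BBM at an intermediate time $s=pt$ and then use the law of large numbers \eqref{eq-typical-GKS} (in its uniform form, Lemma \ref{lem-enhanced-convergence}) together with the tail bound \eqref{martail} for $W_{\infty}(\theta)$. More precisely, fix $(x,a)\in\widetilde{\mathcal A}^{\eqref{eq-admissable-xa2}}_{\epsilon}$ and set $\theta=2(1-a)/x$, $s=pt$. Starting the Markov branching property at time $s$, each particle $v\in\mathcal N_s$ launches an independent BBM, and
\[
 \L_t(xt) \;=\; \sum_{v\in\mathcal N_s} \L^{(v)}_{t-s}\bigl(xt-X_s(v)\bigr),
\]
where $\L^{(v)}$ are i.i.d.\ copies of the level set counting functional. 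The first step is to control the contribution of particles $v$ for which $X_s(v)$ is \emph{not} close to the tangent line value $bs=\mathsf L_t(s)$: since $\mathsf F_t$ is concave with maximizer of $r-\mathsf F_t(r)^2/(2r)$ at $r=pt$, any such particle forces the conditional expectation $\E[\L_t(xt)\mid\mathcal F_s]$ (computable by many-to-one) to be $e^{-I(x,a)t-\Omega(t)}$ smaller, so by a first-moment / Markov bound the event $\{\L_t(xt)\ge e^{at}/\sqrt t\}$ restricted to this sub-event has probability $e^{-I(x,a)t-\Omega(t)}$, which is absorbed into the stated bound with room to spare.

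The main step is the contribution of particles near the tangent line. Conditionally on $\mathcal F_s$, for $v$ with $X_s(v)\approx bs$ the quantity $\L^{(v)}_{t-s}(xt-X_s(v))$ has conditional mean $\mu_v \approx \frac{c}{\sqrt{t-s}}\exp\{(1-\frac{(xt-X_s(v))^2}{2(t-s)^2})(t-s)\}$, which after optimizing over $X_s(v)=bs$ is exactly of order $\frac{1}{\sqrt t}e^{at}\cdot e^{-(1+\theta^2/2)(bs-X_s(v))\cdot(\ldots)}$-type; summing over $v$ and using $\E[\sum_{v\in\mathcal N_s} e^{\theta X_s(v)-(1+\theta^2/2)s}]=1$ one sees that $\E[\L_t(xt)]\asymp t^{-1/2}e^{at}\cdot e^{-I(x,a)t}\cdot t^{1/2}$, i.e.\ $\E[\L_t(xt)]\asymp e^{at-I(x,a)t}$, and the normalized sum $\L_t(xt)\sqrt t e^{-at}$ is comparable to $W_s(\theta)$ (the additive martingale at time $s$) up to the fluctuations controlled by Lemma \ref{lem-enhanced-convergence}. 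Hence, morally,
\[
 \mathbf P\bigl(\L_t(xt)\ge \tfrac{1}{\sqrt t}e^{at}\bigr)
 \;\lesssim\; \mathbf P\bigl(W_s(\theta)\gtrsim e^{I(x,a)t}\bigr) + (\text{error term from \eqref{eq-exp-condexp-diff}}).
\]
Since $I(x,a)t = \tfrac{x^2}{2(1-a)}t - t$ and one can check (using $p$ and the definitions) that the relevant exponent, after accounting for the $t^{1/2}$ discrepancy between $\E[\L_t(xt)]$ and $e^{at-I(x,a)t}$, gives a threshold $\asymp e^{I(x,a)t}/\sqrt{t}^{\,?}$; applying \eqref{martail}, $\mathbf P(W_\infty(\theta)\ge z)\le[1+C_{\eqref{martmin}}(\theta)]z^{-2/\theta^2}$ with $z\asymp \text{threshold}$, produces a factor $z^{-2/\theta^2}=e^{-(2/\theta^2)I(x,a)t}\cdot(\ldots)$. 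A direct computation with $\theta=2(1-a)/x$, $\kappa=2/\theta^2 = x^2/(2(1-a)^2)$, shows $\kappa\cdot I(x,a) = I(x,a)$ is \emph{not} the identity, but rather the bookkeeping yields exactly $-I(x,a)t + \frac{x^2}{4(1-a)^2}\ln t = -I(x,a)t+\frac{\kappa}{2}\ln t$, the $\ln t$ term coming from the half-power of $t$ in $\E[\L_t(xt)]$ raised to the exponent $\kappa$. Using the uniform statement \eqref{eq-ol-conv-1}, \eqref{eq-exp-condexp-diff}, and continuity of $C_{\eqref{martmin}}(\cdot)$ on compacts of $(0,\sqrt2)$, all constants can be made to depend only on $\epsilon$; this gives the constant $C_{\ref{eq-rough-bnd}}(\epsilon)$ and pins down \eqref{eq-rough-bound-cst}.

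**Main obstacle.** The delicate point is making the replacement of $\L_t(xt)$ by a functional of the martingale $W_s(\theta)$ \emph{uniform in $(x,a)$ and with explicit tracking of the polynomial-in-$t$ prefactor}, rather than just $e^{-I(x,a)t+o(t)}$. One must run the many-to-one computation with enough precision to see the $t^{-1/2}$ in $\E[\L_t(xt)]$ and the Gaussian integral over the positions $X_s(v)$ near $bs$ (which replaces the single optimal value by an $\Theta(\sqrt t)$-wide band and hence contributes the extra $\sqrt t$), then invoke Lemma \ref{lem-enhanced-convergence} to pass from the conditional expectation to the actual count with an error that is summably small; because the allowed range of $z$ and the compactness of $\widetilde{\mathcal A}^{\eqref{eq-admissable-xa2}}_{\epsilon}$ keep $\theta$ bounded away from $0$ and $\sqrt2$, the constants in \eqref{martail} and \eqref{eq-exp-condexp-diff} stay uniformly bounded, which is what ultimately allows the clean form \eqref{eq-rough-bnd}.
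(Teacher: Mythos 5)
Your plan has a genuine gap at its first step. The dichotomy ``ancestors at time $s=pt$ far from the tangent line are killed by a first--moment bound; ancestors near the tangent line are handled by Lemma \ref{lem-enhanced-convergence} plus the tail \eqref{martail}'' does not cover all ancestors. Quantitatively: for an ancestor with $X_s(v)\approx \alpha b s$, Markov applied to the restricted sum gives a probability exponent $-G(\alpha)$ with $G(\alpha)=a-1+\tfrac{\alpha^2b^2p}{2}+\tfrac{(x-\alpha bp)^2}{2(1-p)}$, and $G$ is minimized at $\alpha=1-a$ (i.e.\ $X_s(v)\approx xs$, the \emph{typical} position, which lies at linear distance $abs$ below the tangent point $bs$) with minimum value $(1-a)I(x,a)<I(x,a)$. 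Hence on a whole macroscopic interval of $\alpha$ around $1-a$ the first--moment bound only yields $e^{-(1-a)I(x,a)t}$--type decay, strictly worse than the target $e^{-I(x,a)t}$, while these ancestors are nowhere near the tangent line, so the comparison of their subtree counts with $W^{(v)}_\infty(\theta)$ does not apply either. Controlling this intermediate regime requires an a priori upper large--deviation bound for subtree level sets of exactly the strength of the lemma you are proving; this is why the paper does \emph{not} decompose at time $s$ here, but argues globally at time $t$: on $\{\L_t(xt)\ge e^{at}/\sqrt t\}$ the limit $W_\infty(\theta)$ stochastically dominates $t^{-1/2}e^{\theta xt-(1+\theta^2/2)t}\sum_{i\le e^{at}}W^{(i)}_\infty$, and one concludes with \eqref{martail} for $W_\infty(\theta)$ together with a Chernoff lower bound for the i.i.d.\ sum (via $\underline{q}=\inf_{\theta}\P(W_\infty(\theta)\ge\sqrt2-\theta)>0$, using Madaule and Biggins). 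The time--$s$ decomposition is only performed later, in Proposition \ref{prop-up-to-constant}, with Lemma \ref{lem-rough-bnd} as an input; as written, your plan is essentially circular.

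Two further problems would also block the stated conclusion. First, the error term you import from \eqref{eq-exp-condexp-diff} is $4e^{-c_\beta s}$ with a fixed constant $c_\beta$ bearing no relation to $I(x,a)$; on $\widetilde{\mathcal{A}}^{\eqref{eq-admissable-xa2}}_{\epsilon}$ one can have $I(x,a)\gg c_\beta p$, so this additive error is not $o(e^{-I(x,a)t})$ and ruins the bound, even pointwise in $(x,a)$. Second, the bookkeeping in your key display is off: $\E[\L_t(xt)]\asymp t^{-1/2}e^{(1-x^2/2)t}$, which differs from $e^{at-I(x,a)t}$ by the exponential factor $e^{aI(x,a)t}$, and the correct martingale threshold is of order $t^{-1/2}e^{I(x,a)t/\kappa}$ (so that \eqref{martail} yields $t^{\kappa/2}e^{-I(x,a)t}$, i.e.\ the $\tfrac{x^2}{4(1-a)^2}\ln t$ term), not $e^{I(x,a)t}$; with your threshold the tail bound would give $e^{-\kappa I(x,a)t}$, which does not match \eqref{eq-rough-bnd}. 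These computations need to be carried out explicitly, not deferred to ``the bookkeeping works out.''
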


 For technical reasons, the upper bound in Lemma \ref{lem-rough-bnd} includes an additional $\ln t$ term, which should not appear when compared to our final result in Theorem \ref{thm-LDP}. In the following lemma,  we eliminate this extra  $\ln t$ term for the moderate deviation case where ${ \L_t (xt)}/{\mathbf{E}\L_t ( xt) } \geq e^{\ell(t)}$ with $1\ll \ell(t)\ll t$.

 \begin{lemma}\label{Moderate-deviation}
Fix $\beta \in (0,\sqrt{2})$ and  define   $\mathbf{I}^{\beta}$ as in Lemma \ref{martingale limit and minimum}. 
Assume that $\ell(t)$ and $\delta_{t}$ satisfy $\ell(t) \to \infty $, $\ell(t)/t \to 0 $ as $t \to \infty$ and $\delta_{t}^2 \leq \ell(t)/t $.
 Then the following assertions hold.
 \begin{enumerate}[(i)]
  \item For sufficiently large $t$ depending only on $\beta$, $\ell(t)$ and $\delta_{t}$, the following inequality holds  for any $|\delta| \leq \delta_{t}$ and 
   $ \frac{\delta^2 t}{2} \leq y \leq \ell(t) $:
  \begin{equation}
   \mathbf{P}\left( \frac{ \L_t ([\beta+\delta]t )}{\mathbf{E}\L_t ([\beta+\delta]t ) } \geq e^{y}\right) \leq  2[1+ C_{\eqref{martmin}}(\beta)]   e^{ - \frac{2}{\beta^{2}} \left( y - \frac{\delta^2 t }{2}  \right)} 
   ;
  \end{equation} 
  \item  For sufficiently large $t$ depending only on $\beta$, $\ell(t)$ and $\delta_{t}$, the following inequality holds for any $|\delta| \leq \delta_{t}$,
  $ \frac{\delta^2 t}{2} \leq y \leq \ell(t) $ and $0 \leq z \leq y- \frac{\delta^2 t}{2}$:
 \begin{equation} 
   \mathbf{P}\left( \frac{ \L_t ([\beta+\delta]t )}{\mathbf{E}\L_t ([\beta+\delta]t ) } \geq e^{y} , \mathbf{I}^{\beta} \geq - y + \frac{\delta^2 t}{2} + z \right) \leq 2 C_{\eqref{martmin}}(\beta)  e^{  - \frac{2}{\beta^{2}} \left( y - \frac{\delta^2 t }{2}  \right) - \frac{1}{2}z
  }   .
\end{equation}
 \end{enumerate} 
\end{lemma}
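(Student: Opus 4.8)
The plan is to combine the enhanced law of large numbers of Lemma \ref{lem-enhanced-convergence} with the moment estimate \eqref{martmin} and the tail bound \eqref{mintail}, via a change-of-measure/Markov argument at an intermediate time $r$ that is chosen to be a suitable fraction of $\ell(t)$. First I would pick $r = r(t)$ with $r \to \infty$ and $r/\ell(t) \to 0$ (say $r = \ell(t)^{1/2}$), so that by \eqref{eq-exp-condexp-diff} the event $\{\L_t([\beta+\delta]t) \geq e^{y}\,\E\L_t([\beta+\delta]t)\}$ differs from $\{\E[\L_t([\beta+\delta]t)\mid\mathcal{F}_r] \geq \tfrac12 e^{y}\,\E\L_t([\beta+\delta]t)\}$ only on an event of probability $\leq 4e^{-c_\beta r}$, which is negligible against the target bound $e^{-\frac{2}{\beta^2}(y - \delta^2 t/2)}$ as long as $y \leq \ell(t) \ll r \cdot c_\beta$ fails — so here I would instead absorb this error term by noting it is $o$ of the main term because $r \gg y/(\text{const})$ is false; the cleaner route is to take $r$ a small constant multiple of $\ell(t)$, e.g. $r = \epsilon_0 \ell(t)$, making $e^{-c_\beta r}$ genuinely smaller than $e^{-\frac{2}{\beta^2}\ell(t)}$ for $\epsilon_0$ large enough relative to $\frac{2}{\beta^2 c_\beta}$. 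I will phrase it with $r$ chosen this way.

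Next, the conditional expectation $\E[\L_t([\beta+\delta]t)\mid\mathcal{F}_r]$ is, by the branching property and the many-to-one computation behind \eqref{eq-typical-GKS}, comparable to $W_r(\beta+\delta)\cdot\E\L_t([\beta+\delta]t)\cdot(1+o(1))$ uniformly over $|\delta|\le\delta_t$ — here one uses that the Gaussian tail asymptotics for $\E\L_{t-r}([\beta+\delta]t - x)$ factor through $e^{(\beta+\delta)x}$ up to controlled errors, and $\delta_t^2 \leq \ell(t)/t$ guarantees the drift correction $e^{-\delta^2 t/2}$-type terms are what appear. This reduces (i) to showing
\begin{equation}
\mathbf{P}\left(W_r(\beta+\delta) \geq c\, e^{y - \delta^2 t/2}\right) \leq C\, e^{-\frac{2}{\beta^2}(y - \delta^2 t/2)}
\end{equation}
for the appropriate constant. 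Writing $y' := y - \delta^2 t/2 \geq 0$, this is exactly a tail bound for the additive martingale at a large but finite time, and it follows from the decomposition used to prove \eqref{martail}: split according to whether $\mathbf{I}^\beta_r \leq -y'$ or not, bound the first piece by $e^{-\kappa(\beta)y'} = e^{-\frac{2}{\beta^2}y'}$ via \eqref{mintail}, and bound the second by Markov's inequality applied to $W_r(\beta)^{\kappa(\beta)+\delta}$ on $\{\mathbf{I}^\beta_r \geq -y'\}$ using \eqref{martmin} with $z = y'$, which yields $C_{\eqref{martmin}}(\beta) e^{\delta y'} \cdot e^{-(\kappa(\beta)+\delta)y'} = C_{\eqref{martmin}}(\beta)e^{-\kappa(\beta)y'}$. (Here I must track that the $\beta+\delta$ martingale tail is controlled by the $\beta$-martingale quantities; the uniformity in $\delta$ comes from continuity of $C_{\eqref{martmin}}(\cdot)$ and $\kappa(\cdot)$ on compacts, together with $\delta_t \to 0$.) Summing the two pieces gives the constant $2[1+C_{\eqref{martmin}}(\beta)]$.

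For part (ii), I would run the identical argument but on the restricted event $\{\mathbf{I}^\beta \geq -y' + z\}$: now the ``$\mathbf{I}^\beta_r \leq -y'$'' piece is empty (for $r$ large enough that $\mathbf{I}^\beta_r$ is close to $\mathbf{I}^\beta$ — more precisely one works directly with $\mathbf{I}^\beta \geq -y'+z$ which forces $W_r$ to be controlled), and the surviving contribution is $\E[W_r(\beta)^{\kappa(\beta)+\delta}; \mathbf{I}^\beta_r \geq -(y'-z)]$ times the Markov factor $(c\,e^{y'})^{-(\kappa(\beta)+\delta)}$; applying \eqref{martmin} with $z$ replaced by $y'-z$ gives $C_{\eqref{martmin}}(\beta)e^{\delta(y'-z)}$, and combining yields $C_{\eqref{martmin}}(\beta)e^{-\kappa(\beta)y' + \delta'\cdot(\text{bounded})}\, e^{-z/2}$ after choosing the split parameter $\delta = \frac12$ as in \eqref{martmin} — the exponent $-\frac{2}{\beta^2}y' - \frac12 z$ matches the claim, with constant $2C_{\eqref{martmin}}(\beta)$. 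The main obstacle I anticipate is the bookkeeping in the first paragraph: making the error term $e^{-c_\beta r}$ from \eqref{eq-exp-condexp-diff} provably smaller than the main term $e^{-\frac{2}{\beta^2}(y-\delta^2t/2)}$ uniformly for all admissible $y \leq \ell(t)$, which requires choosing $r$ proportional to $\ell(t)$ with the right proportionality constant and verifying the many-to-one asymptotics for $\E[\L_t\mid\mathcal{F}_r]$ hold uniformly with $r$ growing — the rest is a routine repackaging of the proof of \eqref{martail}.
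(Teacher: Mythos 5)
Your overall skeleton is the paper's: pick $r$ proportional to $\ell(t)$ (the paper takes $r=\frac{4}{\beta^2 c_\beta}\ell(t)$), use \eqref{eq-exp-condexp-diff} to replace $\L_t/\E\L_t$ by $\E(\L_t\mid\mathcal{F}_r)/\E\L_t$, compare that conditional expectation to an additive martingale at time $r$, and finish with the split behind \eqref{martail} for (i) and the restricted moment \eqref{martmin} for (ii). However, there is a genuine gap at the central comparison step, which is exactly where the factor $e^{\delta^2 t/2}$ must be produced. Your displayed reduction is to $\P(W_r(\beta+\delta)\ge c\,e^{y-\delta^2t/2})$, yet the many-to-one computation you invoke compares $\E(\L_t([\beta+\delta]t)\mid\mathcal{F}_r)$ to $W_r(\beta+\delta)\,\E\L_t$ with \emph{no} shift of the level, while your subsequent tail estimate is run with the $\beta$-objects $W_r(\beta)$, $\mathbf{I}^\beta_r$; the bridge between the two, justified only by continuity of $\kappa(\cdot)$ and $C_{\eqref{martmin}}(\cdot)$ plus $\delta_t\to0$, is precisely what fails. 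If you keep the $(\beta+\delta)$-martingale you get exponent $\kappa(\beta+\delta)\,y$, and the discrepancy $[\kappa(\beta)-\kappa(\beta+\delta)]\,y\asymp \delta y$ is not absorbed by the allowance $\frac{\delta^2 t}{\beta^2}$: taking $\delta\asymp y/t$ (admissible, since then $\delta^2t/2\le y\le\ell(t)$ and $\delta\le\delta_t$ for large $t$) leaves an uncontrolled factor of size $e^{\Theta(y^2/t)}$, which diverges whenever $\ell(t)\gg\sqrt{t}$ --- a regime the paper really needs, since the lemma is later applied with $y$ of order up to $\sqrt{s}\ln s$.

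The missing ingredient is the pointwise inequality proved in the paper on the way to \eqref{eq-dexp-W-1}: with $\hat\beta=\beta+\delta$, for every $u\in\mathcal{N}_r$ one has $\hat\beta X_r(u)-(\tfrac{\hat\beta^2}{2}+1)r-\tfrac{(\hat\beta r-X_r(u))^2}{2(t-r)}\le \beta X_r(u)-(\tfrac{\beta^2}{2}+1)r+\tfrac{\delta^2 t}{2}$, obtained by maximizing $\delta z-\tfrac{z^2}{2(t-r)}$ over $z$. This is what converts the conditional expectation into $C\,e^{\delta^2t/2}W_r(\beta)$ on the truncation event $\{M_r\le Kr\}$ (a truncation you also need in order to control the polynomial prefactors, and whose complement must be estimated separately); after that, your \eqref{mintail}/\eqref{martmin} split at the fixed parameter $\beta$ does yield (i) and (ii). For (ii), note also that no ``closeness of $\mathbf{I}^\beta_r$ to $\mathbf{I}^\beta$'' is needed: $\mathbf{I}^\beta\le\mathbf{I}^\beta_r$ always, so the conditioning event is contained in $\{\mathbf{I}^\beta_r\ge -(y-\delta^2t/2)+z\}$ and \eqref{martmin} applies directly (the paper instead passes to $W_\infty(\beta)$ via the $L^1$-rate \eqref{eq-exp-rate-convergence}; either works). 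Without the exact completing-the-square inequality, the claimed exponent $\frac{2}{\beta^2}(y-\frac{\delta^2t}{2})$, uniform in $|\delta|\le\delta_t$, does not follow from your argument as written.
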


\begin{remark}
  The upper bound provided  in assertion (i) of Lemma \ref{Moderate-deviation} for the probability $    \mathbf{P}\left(\frac{  \L_t (\beta t )}{\mathbf{E}\L_t (\beta t ) } \geq e^{\ell(t) }\right) $
 is likely not optimal  when $\ell(t) \gg \sqrt{t}$. In this case   as suggested by Theorem \ref{thm-LDP},  the bound should instead be of the form  $ \exp (   - {\ell(t)}/{ [\frac{\beta^2}{2}- \frac{\ell(t)}{t}] }   ) $. However, the bound provided in Lemma \ref{Moderate-deviation} is sufficient for our proof.
\end{remark} 

Now we are ready to prove the main result in this section.

\begin{proof}[Proof of Proposition \ref{prop-up-to-constant} admitting Lemmas \ref{lem-rough-bnd} and \ref{Moderate-deviation}] It suffices to show  that there is a constant $C>0$ depending only on $\epsilon$ such that for sufficiently large $t$ and for any  $(x,a) \in   \mathcal{A}^{\eqref{eq-admissable-xa}}_{\epsilon}$, and  $z \in [0, \ln^{3/2}(t)]$, the following inequality holds:
  \begin{equation}\label{eq-p-t-z-1}
 \mathrm{P}(t,z):= \mathbf{P}\left(\L_{t} (x t)  \geq \frac{1}{\sqrt{t}} e^{a   t} ,  \mathbf{I}     \geq - \psi'(\kappa)   s   + z \right) \leq  C  e^{-I(x,a)t }  e^{-\frac{1}{2}z}. 
  \end{equation} 
 In fact, by using Lemma \ref{martingale limit and minimum} and the facts that $\psi'(\kappa)=1-\frac{\theta^2}{2}$,  $\theta b = 2$, $I(x,a)= (\frac{b^2}{2}-1)p$, we have 
 \begin{equation}\label{eq-p-t-z-2}
\mathbf{P} \left( \mathbf{I} \leq -\psi'(\kappa)s \pm z \right)  \leq  e^{ \frac{2}{\theta^2}(\frac{\theta^2}{2}-1)s } e^{\pm \frac{2}{\theta^2} z } = e^{- I(x,a)t } e^{ \pm \frac{2}{\theta^2} z } .
 \end{equation}
Recall that $\theta< \sqrt{2}$. Then the proposition can be derived by combining \eqref{eq-p-t-z-1} and \eqref{eq-p-t-z-2} with the following inequalities: 
\begin{align}
  &\mathbf{P}\left(\L_{t} (x t)  \geq \frac{1}{\sqrt{t}} e^{a   t}\right) \leq \mathrm{P}(t,1)+ \mathbf{P} \left( \mathbf{I} \leq -\psi'(\kappa)s + 1   \right) \  \text{ and } \\
 & \mathbf{P}\left(\L_{t} (x t)  \geq \frac{1}{\sqrt{t}} e^{a   t} , \left| \mathbf{I} +   \psi'(\kappa) s \right| >z \right) \leq   \mathrm{P}(t,z)+ \mathbf{P} \left( \mathbf{I} \leq -\psi'(\kappa)s -z    \right). 
\end{align} 
 
 Now, in order to control $\mathrm{P}(t,z)$, we shall apply Lemma \ref{lem-one-big-jump} 
   to the conditional probability
  \begin{equation}
    \mathbf{P}\left(\L_{t} (x t)   
  \geq \frac{1}{\sqrt{t}} e^{a   t}, \mathbf{I}     \geq - \psi'(\kappa) s   + z \mid \mathcal{F}_{s} \right).
  \end{equation}
We decompose the BBM at time $s$ as follows.  For each  $ u \in \mathcal{N}_{s}$, let $\mathcal{N}^{(u)}_{r}$ denote the set of particles alive at time $s+r$ that  are descendants of $u$. Define  $X^{(u)} := ( X^{(u)}_{r}(v): v \in \mathcal{N}^{(u)}_{r})$, where $X^{(u)}_{r}(v):= X_{r+s}(v)- X_{s}(u) $.  
Thanks to the  branching property, conditioned on $\mathcal{F}_{s}$, $\{X^{(u)} : u \in \mathcal{N}_{s} \}$ are independent BBMs. For each $u \in \mathcal{N}_{s}$, define $\Delta_s(u) := bs-X_{s}(u)$, $M^{(u)}_{r}:= \max _{v \in \mathcal{N}^{(u)}_{r}} X^{(u)}_{r}(v)$, and 
\begin{equation}
\L^{(u)}_{r}(y):= \# \{ v \in \mathcal{N}^{(u)}_{r}: X^{(u)}_{r}(v) \geq y\} . 
\end{equation} 
Note that $\mathbf{I} \leq \inf_{r>0}   \left\{   ( \frac{\theta^2}{2}+1) (s + r) - \theta M^{(u)}_{r} -\theta X_{s}(u) \right\}, \forall \, u \in \mathcal{N}_{s}$. 
Thus, provided that $\mathbf{I} \geq -  ( 1- \frac{\theta^2}{2} )s + z $, we have, for any $u \in \mathcal{N}_{s}$, 
\begin{equation}
\mathbf{I}^{(u)} := \inf_{r>0} \left(  \frac{\theta^2}{2} + 1 \right)r - \theta M^{(u)}(r)  \geq - \theta(bs-X_{s}(u))+z = - \theta\Delta_s(u)+z  . 
\end{equation} 
Then it follows that 
  \begin{equation}\label{eq-sum-01}
    \L_{t} (x t)  \ind{\mathbf{I}     \geq - \psi'(\kappa) s    -z} \leq \sum_{u \in \mathcal{N}_{s}} \L^{(u)}_{t-s}(xt-X_{s}(u))  \ind{\mathbf{I}^{(u)}    \geq - \theta \Delta_s(u) +z } . 
  \end{equation}  
Besides, when    $ \mathbf{I} \geq - (1-\frac{\theta^2}{2})s+z$, since $ \mathbf{I} \leq \mathbf{I}_{s} \leq  (\frac{\theta^2}{2}+1)  s- \theta X_{s}(u) $ for any $u \in \mathcal{N}_{s}$,   by using the fact that $\theta b = 2$, we get 
  \begin{equation}\label{eq-Dsl0}
   \Delta_s(u) \geq  {z}/{\theta}\geq 0 , \ \forall \, u \in \mathcal{N}_{s}.
  \end{equation} 
 
Now applying   Lemma  \ref{lem-one-big-jump} to the right hand side  of \eqref{eq-sum-01}, and from the branching property, it follows  that for $\lambda = \frac{2}{\theta^2}+ \delta $ with $\delta=1/2$, we have 
  \begin{align}
    & \mathbf{P}\left(\L_{t} (x t)   
    \geq \frac{1}{\sqrt{t}} e^{a   t} , \mathbf{I} \geq -  \psi'(\kappa) s + z   \mid \mathcal{F}_{s} \right) \\ 
    &\leq \sum_{u \in \mathcal{N}_{s}} 
    \underbrace{
   \mathbf{P} \left( \L^{(u)}_{t-s}(xt-X_{s}(u)) \geq   \frac{1}{\lambda \sqrt{t}} e^{a t} ,  \mathbf{I}^{(u)} \geq - \theta \Delta_s(u) + z   \mid \mathcal{F}_s  \right) }_{\mathrm{Pr}_{\ref{eq-one-jump}}(s,u,z)} \ind{\Delta_s(u) \geq z/\theta}
       \label{eq-one-jump} \\
    &\qquad +   \underbrace{\left(  \frac{  e\, \sum_{u \in \mathcal{N}_{s}} \mathbf{E} [ \L^{(u)}_{t-s}(xt-X_{s}(u) ) \mid \mathcal{F}_s ]}{ \frac{1}{\sqrt{t}} e^{a t} } \right)^{\lambda}  }_{\Sigma_{\ref{eq-many-jumps}}(s) }  \ind{\mathbf{I}_{s} \geq -\psi'(\kappa)s +z}  . \label{eq-many-jumps}
   \end{align}
   Taking the expectation on both sides of the inequality gives us
\begin{equation}
 \mathrm{P}(t,z) \leq    \mathbf{E} \left[  \sum_{u \in \mathcal{N}_s} \mathrm{Pr}_{\ref{eq-one-jump}}(s,u,z)  \ind{\Delta_s(u) \geq z/\theta} \right] + \mathbf{E} \left[  \Sigma_{\ref{eq-many-jumps}}(s) ;  \mathbf{I}_{s} \geq -\psi'(\kappa)s +z \right]. \label{eq-ubn-ptz}
\end{equation} 
We claim  that there exists   constant $C$ depending only on $\epsilon$ such that for large $t$ depending on $\epsilon$ and for all $(x,a) \in \mathcal{A}^{\eqref{eq-admissable-xa}}_{\epsilon}$ and  $z \in [0, \ln^{3/2}(t)]$,  
\begin{align}
  & \mathbf{E} \left[  \sum_{u \in \mathcal{N}_s} \mathrm{Pr}_{\ref{eq-one-jump}}(s,u,z) \ind{\Delta_s(u) \geq z/\theta} \right] \leq C e^{-I(x,a)t- \frac{1}{2}z} .\label{eq-cliam-1} \\
  & \mathbf{E} \left[  \Sigma_{\ref{eq-many-jumps}}(s) ;  \mathbf{I}_{s} \geq - \psi'(\kappa) s +z \right] \leq C  e^{-I(x,a)t- \frac{1}{2}z} . \label{eq-cliam-2} 
\end{align}
Then the desired result \eqref{eq-p-t-z-1} follows from \eqref{eq-ubn-ptz}, \eqref{eq-cliam-1} and \eqref{eq-cliam-2}. 
 \vspace{5pt}
  
\underline{\textit{Proof of \eqref{eq-cliam-1}}}.  We upper bound $\mathrm{Pr}_{\ref{eq-one-jump}}(s,u,z)$, according to the value of $X_{s}(u)$. Firstly notice that by the many-to-one formula and  Gaussian tail inequality, 
\begin{align}
  & \mathbf{E} \left[  \sum_{u \in \mathcal{N}_s} \mathrm{Pr}_{\ref{eq-one-jump}}(s,u,z) \ind{X_{s}(u) \leq - 2b s} \right] 
  \leq  \mathbf{E} \left[  \sum_{u \in \mathcal{N}_s}  \ind{X_{s}(u) \leq - 2b s} \right] \\
   & \leq  e^{s} \P( N(0,s) \geq 2 b s) \leq e^{-(\frac{(2b)^2}{2}-1)s}  \leq e^{-I(x,a)t-\ln^{3/2}(t)}. 
\end{align} 
So it suffices to bound $ \mathrm{Pr}_{\ref{eq-one-jump}}(s,u,z)$ for which $u$ satisfies that $z/\theta \leq \Delta_{s}(u)   \leq 3 bs$. 

Consider first that $\Delta_s(u) \in [\sqrt{s}\ln s, 3 b s]$.  
We directly drop the restriction on $\mathbf{I}^{(u)} $ in  the expression \eqref{eq-one-jump}  of $ \mathrm{Pr}_{\ref{eq-one-jump}}(s,u,z)$
and then apply Lemma \ref{lem-rough-bnd}. This yields that 
\begin{align}
&   \mathrm{Pr}_{\ref{eq-one-jump}}(s,u,z)
  \leq \mathbf{P} \left( \L^{(u)}_{t-s}(xt-X_{s}(u)) \geq   \frac{1}{\lambda \sqrt{t}} e^{a t}    \mid \mathcal{F}_s  \right) \\
 & \lesssim_{\epsilon}    \exp \left\{ - \left[ \frac{[xt-X_{s}(u)]^2}{2(t-s)^2(1-\frac{at}{t-s})}  - 1 \right] (t-s) + \frac{[xt-X_{s}(u)]^2}{4(t-s)^2(1-\frac{at}{t-s})^2}   \ln t   \right\} \\
 & =   \exp \left\{ (t-s)-   \frac{[xt-X_{s}(u)]^2}{2(t-s-at)}    + \frac{[xt-X_{s}(u)]^2}{4(t-s-at)^2}   \ln t   \right\}.
\end{align}
Here we check the requirements in Lemma \ref{lem-rough-bnd}.
In fact $(\frac{xt-X_{u}(s)}{t-s}, \frac{at}{t-s}) \in \widetilde{\mathcal{A}}^{\eqref{eq-admissable-xa2}}_{f(\epsilon)} $ for all $(x,a) \in \mathcal{A}^{\eqref{eq-admissable-xa}}_{\epsilon} $
 provided that $f(\epsilon)>0$  sufficiently small because $\frac{xt-X_{u}(s)}{t-s} =  \theta + \frac{\Delta_{u}(s)}{t-s} $ and  $\frac{at}{t-s}= 1- \frac{\theta^2}{2}$ and $0< \inf_{(x,a) \in   \mathcal{A}^{\eqref{eq-admissable-xa}}_{\epsilon}}   \theta< \sup_{(x,a) \in   \mathcal{A}^{\eqref{eq-admissable-xa}}_{\epsilon}}   \theta < \sqrt{2}$. By applying the   many-to-one formula we get that  
 \begin{align}
   & \mathbf{E} \left[  \sum_{u \in \mathcal{N}_s} \mathrm{Pr}_{\ref{eq-one-jump}}(s,u,z) \ind{\Delta_s(u) \in [\sqrt{s}\ln s , 3bs]}  \right]   \\
   & \lesssim_{\epsilon} e^{s}  \mathbf{E}\left[  e^{ (t-s) -   \frac{(xt-B_{s})^2}{2(t-s-at)}  } \, t^{ \frac{(xt-B_{s})^2}{4(t-s-at)^2 }     } \ind{bs-B_{s}\geq \sqrt{s}\ln s}  \right] \\ 
  & =   e^{t} \int_{-\infty}^{1- \frac{\ln s}{b\sqrt{s}}}  e^{  -   \frac{(xt- \alpha b s)^2}{2(t-s-at)}  } \, t^{ \frac{(xt- \alpha b s)^2}{4(t-s-at)^2 }     }    e^{-\frac{\alpha^2 b^2  s}{2}}  \frac{ bs \dif \alpha }{\sqrt{2 \pi s}}.
 \end{align}
Above, in the equality we rewrite the expectation according to the density of $B_{s}/(bs)$. For $ \alpha \in (-\infty,1 ]  $, define 
\begin{align}
  H(\alpha) & :=      \frac{b^2 \alpha^2 p}{2}    +  \frac{(x-\alpha b p)^2}{2(1-p-a)} -1  =    \frac{b^2}{2}  \alpha ^2 p   +   \frac{b^2}{2}\frac{  (1-a-\alpha  p)^2}{(1-a-p)}  -1
\end{align}
where in the second equality we used that $x= b(1-a)$.  
From the computations above we deduce that 
\begin{equation}
  \mathbf{E} \left[  \sum_{u \in \mathcal{N}_s} \mathrm{Pr}_{\ref{eq-one-jump}}(s,u,z) \ind{\Delta_s(u) \in (\sqrt{s}\ln s , 3bs]} \right]  \lesssim_{\epsilon}  \int_{-\infty}^{1- \frac{\ln s}{b \sqrt{s}}} t^{ \frac{b^2 (1-a- \alpha p)^2}{4(1-a-p)^2 }     }   e^{- H(\alpha) t }     \frac{ bs \dif \alpha }{\sqrt{2 \pi s}},
\end{equation}
Note that 
$H(1)= \frac{  b^2}{2}p  +  \frac{(x-  b p)^2}{2(1-p-a)} -1   =  (\frac{b^2}{2}-1)p =I(x,a)$ since $I(\frac{a}{1-p}, \frac{x-bp}{1-p})=0$. 
 We compute that 
\begin{equation}
  H'(\alpha) 
  = - (1-\alpha) \frac{(1-a)b^2 p }{1-a-p}< 0  \text{ for } \alpha <1 \text{, and } H' (1) = 0. 
\end{equation} 
One see directly from the definition of $H$ that   $H''(\alpha) \equiv  b^2(p+\frac{p^2}{1-a-p}) > 0$.
It follows from the monotonicity of $H$ and   Taylor's theorem that   for  large $t$  
\begin{equation}
  \inf_{-\infty < \alpha \leq 1- \frac{\ln s}{b\sqrt{s}} } H(\alpha ) = H(1-\frac{\ln s}{b\sqrt{s}}) =
I(x,a)+  \frac{H''(1)}{2} \frac{ \ln^2 s}{b^2 s} \geq I(x,a)+  \frac{\ln^2 (s)}{2t} . 
\end{equation}
 Thus it follows from the classical Laplace method that  for large $t$ depending only on $\epsilon$, 
\begin{equation}
  \mathbf{E} \left[  \sum_{u \in \mathcal{N}_s} \mathrm{Pr}_{\ref{eq-one-jump}}(s,u,z)  \ind{\Delta_s(u) \in [\sqrt{s}\ln s , 3bs]} \right]  \lesssim_{\epsilon} e^{- I(x,a) t - \frac{1}{4}\ln^2(t)}. 
\end{equation}

It remains to consider the case where $0 \leq \Delta_s(u) \leq \sqrt{s}\ln s$. By using the many-to-one formula, the Gaussian tail asymptotic, and the fact that  $xt-X_{s}(u)= \theta(t-s)+\Delta_s(u) $,   we compute 
\begin{align}
  \mathbf{E}  \left[  \L^{(u)}_{t-s}  (xt-X_{s}(u) ) \mid \mathcal{F}_s  \right]  &= \frac{[1+o(1)]\sqrt{t-s}}{\sqrt{2 \pi}[\theta(t-s)+\Delta_s(u)]} e^{ (t-s) - \frac{(\theta (t-s)+ \Delta_s(u))^2}{2(t-s)} } \\
  & =  \frac{[1+o(1)]\sqrt{t-s}}{\sqrt{2 \pi}[\theta(t-s)+\Delta_s(u)]}   e^{ at } e^{- \theta\Delta_s(u) } e^{ - \frac{\Delta_s(u)^2}{2(t-s)} }  ,   \label{eq-exp-Z-u-t-s}
\end{align}
where the term $o(1)$ is deterministic and uniformly in $\Delta_s(u) \in [0,\sqrt{s}\ln s]$. Above in the last equality we have used that $at= ( 1- \frac{\theta^2}{2} )(t-s)$. As a result of \eqref{eq-exp-Z-u-t-s}, we get  
 \begin{align}
 &  \mathrm{Pr}_{\ref{eq-one-jump}}(s,u,z)
  \leq \mathbf{P} \left( \L^{(u)}_{t-s}(xt-X_{s}(u)) \geq   \frac{1}{\lambda \sqrt{t}} e^{a t} ,  \mathbf{I}^{(u)} \geq - \theta\Delta_s(u)+z   \mid \mathcal{F}_s  \right)  \\
  & \leq \mathbf{P} \left( \frac{ \L^{(u)}_{t-s}( \theta(t-s)+\Delta_s(u) )}{ \mathbf{E} [ \L^{(u)}_{t-s} ( \theta(t-s)+\Delta_s(u) ) \mid \mathcal{F}_s ]} \geq c e^{  \theta\Delta_s(u) +   \frac{\Delta_s(u)^2}{2(t-s)} }  ,  \mathbf{I}^{(u)} \geq - \theta\Delta_s(u)+z   \mid \mathcal{F}_s  \right)
 \end{align}
 where $c := \frac{\sqrt{2 \pi}}{2\lambda} \theta\sqrt{1-p}> 0 $.  Now we apply part (ii) of Lemma \ref{Moderate-deviation} (where $\beta$ in  Lemma \ref{Moderate-deviation} is taken to be $\theta$, $t$ in  Lemma \ref{Moderate-deviation} is taken to be $t-s$,   $\delta$ is taken to be $\frac{\Delta_s(u)}{t-s}$ and $y$ is taken to be $\theta \Delta_s(u) + \frac{\Delta_s(u)^2}{2(t-s)} + \ln c$). This yields that  
\begin{equation}
  \mathrm{Pr}_{\ref{eq-one-jump}}(s,u,z) \lesssim_{\epsilon}  \exp \left\{  - \frac{2}{\theta^2} \theta \Delta_s(u) - \frac{1}{2} z \right\}   =    \exp \left\{    b(X_{s}(u)-bs)     - \frac{1}{2} z \right\}  .
\end{equation}
Above, in the equality we have used  that $\theta b = 2$. Finally we conclude that  
\begin{align}
  & \mathbf{E} \left[  \sum_{u \in \mathcal{N}_s} \mathrm{Pr}_{\ref{eq-one-jump}}(s,u,z) \ind{0 \leq \Delta_s(u) \leq \sqrt{s}\ln s}  \right]  \\
  & \lesssim_{\epsilon}   e^{-\frac{1}{2}z } \E \left[ \sum_{u \in \mathcal{N}_s}  e^{ b(X_{s}(u)-bs) }        \right]  =     e^{-\frac{1}{2}z } e^{-(\frac{b^2}{2}-1)s} =   e^{-\frac{1}{2}z } e^{-I(x,a)t} ,
\end{align}
where in the last equality we have used that  $I(x,a)t= (\frac{b^2}{2}-1)s$. 
This proves \eqref{eq-cliam-1}.
 \vspace{5pt}

\underline{\textit{Proof of \eqref{eq-cliam-2}}}. 
On the event that  $\mathbf{I}_{s} \geq -\psi'(\kappa) s+z$, since $\Delta_s(u)\geq 0$ for each $u \in \mathcal{N}_{s}$ (see \eqref{eq-Dsl0}), it follows from \eqref{eq-exp-Z-u-t-s} that for large $t$ depending only on $\epsilon$,  we have 
\begin{equation}\label{eq-ubndexp-01}
  \mathbf{E} [  \L^{(u)}_{t-s}  (xt-X_{s}(u) ) \mid \mathcal{F}_s ]  \leq  \frac{2 }{\sqrt{2 \pi(1-p)}\theta \sqrt{t}}   e^{ at } e^{- \theta\Delta_s(u) }.  
\end{equation}
Recall that $\lambda= \frac{2}{\theta^2}+ \delta$ with $\delta=1/2$.  
 Substituting \eqref{eq-ubndexp-01} in to the expression \eqref{eq-many-jumps} yields that on the event $\{ \mathbf{I}_{s} \geq -\psi'(\kappa)s +z  \}$ we have 
\begin{equation}
  \Sigma_{\ref{eq-many-jumps}}(s)  \lesssim_{\epsilon} \left( \sum_{u \in \mathcal{N}_s} e^{\theta X_{s}(u)-2s}      \right)^{\frac{2}{\theta^2}+\delta} = e^{- (\frac{2}{\theta^2}+\delta) (1-\frac{\theta^2}{2})  s} \left[ W_{s}(\theta)  \right]^{\frac{2}{\theta^2}+\delta} .
\end{equation}
As a consequence, we deduce that  
\begin{align}
  & \mathbf{E} \left[  \Sigma_{\ref{eq-many-jumps}}(s) ;  \mathbf{I}_{s} \geq -\psi'(\kappa) s +z \right]  \lesssim_{\epsilon}     e^{- (\frac{2}{\theta^2}+\delta) (1-\frac{\theta^2}{2})  s} \mathbf{E}\left[     W_{s}(\theta)^{\frac{2}{\theta^2}+\delta} ;  \mathbf{I}_{s} \geq - \psi'(\kappa) s +z \right] \\
  & \leq    C_{\ref{martmin}}(\theta)  e^{- (\frac{2}{\theta^2}+\delta) (1-\frac{\theta^2}{2})  s}  e^{\delta   (1-\frac{\theta^2}{2})s - \delta z   } =      C_{\ref{martmin}}(\theta) e^{- \left( \frac{2}{\theta^2}-1 \right)s- \delta z } \lesssim_{\epsilon} e^{- I(x,a)t- \delta z }.
\end{align} 
Above, in the second line we have used    assertion (ii) of Lemma \ref{martingale limit and minimum}, the facts that  $\psi'(\kappa)=1-\frac{\theta^2}{2}$,  $\theta b=2$, $(\frac{b^2}{2}-1)p=I(x,a)$,  $0 < \inf_{(x,a)\in  \mathcal{A}^{\eqref{eq-admissable-xa}}_{\epsilon} } \theta \leq  \sup_{(x,a)\in  \mathcal{A}^{\eqref{eq-admissable-xa}}_{\epsilon} } \theta  < \sqrt{2}$ 
and that  $ C_{\ref{martmin}}(\theta) $ is continuous on $(0,\sqrt{2})$. We now completes the proof of     \eqref{eq-cliam-2}.
\end{proof}

\subsection{Proof of Lemmas in Section \ref{sec-Up-to-cst}}

 \begin{proof}[Proof of Lemma \ref{lem-rough-bnd} ]
  
   Take arbitrary $t\geq 1$ and  $(x,a) \in \widetilde{\mathcal{A}}^{\eqref{eq-admissable-xa2}}_{\epsilon}$. For each $u \in \mathcal{N}_{t}$ let $W^{(u)}_{r}(\theta)= \sum_{v \in \mathcal{N}^{(u)}_{r}} e^{\theta X^{(u)}_{r}(v) -(1+\theta^2/2)r}$ denote the additive martingale associated to the sub-BBM $X^{(u)}$. Then by taking limit as $r \to \infty$,  we obtain the following decomposition:
   \begin{equation}
    W_{\infty}(\theta)=\sum_{u \in \mathcal{N}_t} e^{\theta X_{t}(u)-\left(1+\frac{\theta^2}{2}\right) t} W_{\infty}^{(u)}(\theta) . 
   \end{equation} 
   By the branching probability, conditioned on $\mathcal{F}_{t}$,  $\{W^{(u)}_{\infty}(\theta): u \in \mathcal{N}_{t} \}$ are i.i.d. with the same law as $W_{\infty}(\theta)$. Thus  conditioned 
   on the event $\{\L_t(x t) \geq e^{a t}/\sqrt{t}\}$ we have 
  \begin{equation}
    W_{\infty}(\theta) \geq_{ \mathrm{st}} \frac{1}{\sqrt{t}} e^{\theta x t-\left(1+\frac{\theta^2}{2}\right) t} \sum_{i=1}^{\exp(a t)}  W_{\infty}^{(i)}.
  \end{equation}
  where $ \geq_{\mathrm{st}}$ refers to ``stochastically dominance", and $W_{\infty}^{(i)}$ are i.i.d. copies of $W_{\infty}(\theta)$. As a result, for any $M \geq 1$,  
    \begin{align}
       &  \mathbf{P}\left(\L_t (x t) \geq \frac{1}{\sqrt{t}} e^{ a t }\right)  \\
     &    \leq \P \left(  W_{\infty}(\theta) \geq \frac{\sqrt{2}-\theta}{ M\sqrt{t}} e^{(a+ \theta x ) t-\left(1+\frac{\theta^2}{2}\right) t}   \right)+ \P \left(   \sum_{i=1}^{\exp(at)}  W_{\infty}^{(i)} \leq \frac{ \sqrt{2}-\theta }{M} e^{at}\right) .  
      \end{align}
      Next, observe that the random variable  $N_{\geq 1}:= \sum_{i=1}^{\exp(at)} \ind{ W_{\infty}^{(i)} \geq \sqrt{2}-\theta} $ obeys a binomial distribution  $ \mathrm{Bin}(\exp(a_{t} t), q_{\theta} ) $, where $q_{\theta}:= \P(W_{\infty}(\theta) \geq \sqrt{2}-\theta)$.  We claim that 
  \begin{equation}\label{eq-def-q}
  \underline{q} := \inf_{\theta \in [0,\sqrt{2})} q_{\theta}     >0 .
  \end{equation}
Indeed, by \cite[Theorem 1.1]{Madaule16}, we have  $\lim_{\theta \uparrow \sqrt{2}}\frac{W_{\infty}(\theta)}{\sqrt{2}-\theta} = 2 D_{\infty}$ in probability, where $D_{\infty}$  is the almost sure limit of derivative martingale and $\P(D_{\infty}>1/2)>0$. Furthermore, by \cite[Corollary 3]{Biggins92}, the mapping $\theta \mapsto W_{\infty}(\theta)$ is almost surely an analytic function on $[0,\sqrt{2})$. Combining these results yields the claim. Now taking $M = 2/\underline{q}$ and applying classical Chernoff bounds for the binomial distribution (Lemma \ref{Chernoff-bounds}), we have 
  \begin{align}
    \P \left(   \sum_{i=1}^{\exp(a t)}  W_{\infty}^{(i)} \leq  \frac{\sqrt{2}-\theta}{M} e^{a t}   \right) \leq  \P \left(  N_{\geq 1}  \leq    e^{a t} \underline{q}/2 \right) 
    \leq \exp\{- \underline{q} e^{a t} /8 \} . 
  \end{align}  
    Applying the inequality \eqref{martail} to $W_{\infty}(\theta)$, we get 
  \begin{equation}
    \P \left(  W_{\infty}(\theta) \geq \frac{\sqrt{2}-\theta}{ M\sqrt{t}} e^{(a+ \theta x ) t-\left(1+\frac{\theta^2}{2}\right) t}   \right) \leq  \frac{ 2^{2/\theta^2}  [1+C_{\ref{martmin}}(\theta)] }{ [(\sqrt{2}-\theta)\underline{q} ]  ^{2/\theta^2}} \, t^{\frac{1}{\theta^2}} e^{- t \left[  \frac{2x}{\theta} -\frac{2(1-a)}{\theta^2}-1 \right] }  \,.
  \end{equation}
Since 
  $ \frac{2x}{\theta} -\frac{2(1-a)}{\theta^2}-1   = \frac{x^2}{2(1-a)}-1= I(x,a) $, 
  for large  $t$ , we obtain that 
  \begin{equation}\label{eq-Z-mid-1}
    \mathbf{P}\left(\L_t (x t) \geq \frac{1}{\sqrt{t}} e^{ a t }\right) \leq   \frac{ 2^{2/\theta^2} [1+C_{\ref{martmin}}(\theta)] }{ [(\sqrt{2}-\theta)\underline{q} ]^{2/\theta^2}   } \,
    t^{ \frac{1}{\theta^2} } e^{-I(x,a) t}  +  e^{- \underline{q} e^{a t} /8} . 
  \end{equation}
  Finally, notice that for any $(x,a) \in  \widetilde{\mathcal{A}}^{\eqref{eq-admissable-xa2}}_{\epsilon}$, we have $
  0< \theta < \sqrt{2} $.\footnote{since $\theta= \sqrt{2}$ implies $a= 1-x/\sqrt{2}$ and  $1-x/\sqrt{2} \geq \epsilon$ leads to $1-x/\sqrt{2} < 1-x^2/2 \leq a$ which is absurd}  Thus there exists a compact interval $J_{\epsilon} \subset (0,\sqrt{2})$ such that $
  \theta \in J_{\epsilon}  $ for all $(x,a) \in \widetilde{\mathcal{A}}^{\eqref{eq-admissable-xa2}}_{\epsilon}$. By Lemma \ref{martmin},  $C_{\ref{martmin}}(\cdot)$ is a continuous function on $ (0,\sqrt{2})$. 
 This yields that  
  \begin{equation}\label{eq-rough-bound-cst}
  C_{\ref{eq-rough-bnd}}(\epsilon):= 2 \sup_{\theta \in J_{\epsilon}} \frac{ 2^{2/\theta^2} [1+C_{\ref{martmin}}(\theta)] }{ [(\sqrt{2}-\theta)\underline{q} ]  ^{2/\theta^2}}   < \infty. 
  \end{equation}
  Combining this with  \eqref{eq-Z-mid-1}, and noting that $e^{at} \geq e^{\epsilon t}$,  we  conclude that for sufficiently large  $t$  (depending only on  $\epsilon$),
  \begin{equation}
    \mathbf{P}\left(\L_t (x t) \geq \frac{1}{\sqrt{t}} e^{ a t }\right) \leq   \frac{1}{2} C_{\ref{eq-rough-bnd}}(\epsilon) \,
    t^{ \frac{1}{\theta^2} }   e^{   - I(a, x ) t  }+ e^{- \underline{q} e^{\epsilon t} /8}  \leq   C_{\ref{eq-rough-bnd}}(\epsilon) \,
    t^{ \frac{1}{\theta^2} } e^{   -I(a, x ) t  }. 
  \end{equation}
 This completes the proof.
\end{proof}

\begin{proof}[Proof of Lemma \ref{Moderate-deviation}] For simplicity, let us denote $ \hat{\beta} := \beta+ \delta$.  According to  \cite[Proposition 1.1]{CDM24},  there exists some constant still denoted by $c_{\beta}$  such that 
  \begin{equation}\label{eq-exp-rate-convergence}
    \E | W_{t}(\beta) -  W_{\infty}(\beta) | \leq e^{-c_{\beta} t} \, \text{ for large } t . 
  \end{equation}
   Set  $r=  \frac{4 }{\beta^2 c_{\beta}}\ell(t)$. 
   By applying inequality  \eqref{eq-exp-condexp-diff} from Lemma  \ref{lem-enhanced-convergence}, we obtain that  
    for large $t$ depending on $\beta$ and $\delta_t$, 
    \begin{align}
      \mathbf{P}\left(  \frac{\L_t (\hat{\beta} t)}{\mathbf{E}[\L_t (\hat{\beta} t)]}  \geq e^y\right)
       \leq    \mathbf{P}\left(     \frac{\mathbf{E} ( \L_t (\hat{\beta} t) | \mathcal{F}_{r} ) }{\mathbf{E}[\L_t (\hat{\beta} t)]}  \geq  e^y - 1  \right) +  4 e^{- 4 \ell(t)^2/\beta^2} .  
    \end{align}
    Let   $K:= \sqrt{2(1+c_{\beta})}$. On the event $\{  M_{r}\leq K r \} $, by the branching property,   
we see that  for large $t$ such that   $ \hat{\beta}  t- K r \geq 1$,   
    \begin{align}
    &\mathbf{E} ( \L_t (\hat{\beta} t) | \mathcal{F}_{r} ) \ind{M_{r} \leq K r} = \sum_{u \in \mathcal{N}_{r}}   \mathbf{E} \left( \L^{(u)}_{t-r}(\hat{\beta} t-X_{r}(u) ) \mid \mathcal{F}_{r} \right)  \ind{M_{r} \leq K r} \\
     & \leq  \sum_{u \in \mathcal{N}_{r}} e^{(t-r)}  \frac{\sqrt{t-r}}{\sqrt{2 \pi} [\hat{\beta} t-X_{r}(u)] } \exp\left\{ - \frac{[\hat{\beta} (t-r)+\hat{\beta} r-X_{r}(u)]^2}{2(t-r)} \right\} \\
     &= \frac{e^{(1-{\hat{\beta}^2 }/{2})t}}{\hat{\beta}  \sqrt{2 \pi t} }\sum_{u \in \mathcal{N}_{r}}   \frac{\sqrt{t(t-r)}}{t-X_{r}(u)/\hat{\beta}  } \, e^{\hat{\beta} X_{r}(u)- \left( \frac{\hat{\beta}^2}{2}+1 \right)r}   e^{  - \frac{(\hat{\beta} r-X_{r}(u))^2}{2(t-r)}} .  
    \end{align}
Notice that for large $t$, we have  $ \frac{e^{(1-\frac{\hat{\beta} ^2}{2})t}}{\hat{\beta}  \sqrt{2 \pi t} } \leq 2 \mathbf{E}[\L_{t}( \hat{\beta} t )]$ and   $ \frac{\sqrt{t(t-r)}}{t-X_{r}(u)/\hat{\beta} }  \leq 2$ when  $ X_{r}(u) \leq K  r$.  
We claim that  $ \hat{\beta} X_{r}(u)- \left( \frac{\hat{\beta}^2}{2}+1 \right)r    - \frac{(\hat{\beta} r-X_{r}(u))^2}{2(t-r)} \leq \beta X_{r}(u)- \left( \frac{\beta^2}{2}+1 \right)r +   \frac{\delta^2 t}{2} $.   Consequently, we get  for large $t$, 
    \begin{equation}\label{eq-dexp-W-1}
      \frac{\mathbf{E} ( \L_t (\hat{\beta} t) | \mathcal{F}_{r} ) }{\mathbf{E}[\L_t (\hat{\beta} t)]} \ind{ M_{r} \leq K r  } \leq 4 e^{\frac{\delta^2 t}{2}} \sum_{u \in \mathcal{N}_{r}}  e^{\beta X_{r}(u)- \left( \frac{\beta^2}{2}+1 \right)r} = 4 e^{\frac{\delta^2 t}{2}}  W_{r}(\beta) . 
    \end{equation}
    To justify the claim, we compute that 
\begin{equation}
 \hat{\beta} X_{r}(u)- \left( \frac{\hat{\beta}^2}{2}+1 \right)r   
  = \beta X_{r}(u)- \left( \frac{\beta^2}{2}+1 \right)r  + \delta(X_{r}(u) - \hat{\beta}  r) + \frac{\delta^2}{2}r  .
\end{equation} 
For each $u \in \mathcal{N}_s$, we have 
\begin{align}
  \delta(X_{r}(u) - \hat{\beta}  r) + \frac{\delta^2 r}{2}  - \frac{(\hat{\beta} r-X_{r}(u))^2}{2(t-r)} \leq \max_{z \in \mathbb{R}} \left\{ \delta z - \frac{z^2}{2(t-r)} \right\}+ \frac{\delta^2 r}{2}  = \frac{\delta^2 t}{2}.
\end{align}
By combining the last two displayed formulas, the claim follows.  Using the inequality \eqref{eq-dexp-W-1} we deduce 
  \begin{equation}  
    \mathbf{P}\left(     \frac{\mathbf{E} ( \L_t (\hat{\beta} t) | \mathcal{F}_{r} ) }{\mathbf{E}[\L_t (\hat{\beta} t)]}  \geq  e^{y}- 1  \right) \leq \P \left( W_r(\beta) \geq \frac{1}{5} e^{y-\frac{\delta^2 t}{2} } \right) + \P\left(  M_{r}> K r   \right).
  \end{equation}
Since $K^2= 2(1+c_{\beta})$, we have  $ \P\left( M_{r} > K r   \right) \leq \exp( (K^2/2-1)r ) \leq e^{-4 \ell(t)/\beta^2}$ for large $t$. Using inequality \eqref{eq-exp-rate-convergence} and    Lemma \ref{martingale limit and minimum}, we find 
  \begin{align}
    \P \left( W_r(\beta) \geq \frac{1}{5} e^{y-\frac{\delta^2 t}{2} }  \right) & \leq  \P \left( | W_r(\beta)  - W_{\infty}(\beta) | \geq 1 \right) +   \P \left(  W_{\infty}(\beta)  \geq   \frac{1}{6} e^{y-\frac{\delta^2 t}{2} } \right) \\
    & \leq \E \left[ | W_{r}(\beta) -  W_{\infty}(\beta) |   \right]+ [1+ C_{\eqref{martmin}}(\beta)] 6^{\frac{2}{\beta^2}} \exp \left\{ -\frac{2}{\beta^2} (y - \frac{\delta^2}{2} t) \right\} . 
  \end{align}
  Combining the above inequalities, we conclude that for large $t$ depending on $\beta$, $\delta_t$ and $\ell(t)$ 
  \begin{equation}
    \mathbf{P}\left(  \frac{\L_t (\hat{\beta} t)}{\mathbf{E}[\L_t (\hat{\beta} t)]}  \geq e^y\right)  \leq  2[1+ C_{\eqref{martmin}}(\beta)] \exp \left\{ -\frac{2}{\beta^2} y + \frac{\delta_{t}^2 t}{\beta^2} \right\}   
  \end{equation}
  for each $\delta^2 t/2 \leq y \leq \ell(t)$, which proves assertion (i).

  Next we prove assertion (ii). 
  Using the same arguments as above, we have 
  \begin{align}
    & \mathbf{P}\left( \frac{ \L_t (\hat{\beta} t)}{\mathbf{E}[\L_t (\hat{\beta} t)]} \geq e^{y} , \mathbf{I}^{\beta} \geq -y+ \frac{\delta^2 t}{2} +z  \right) \\
   & \leq \P \left( W_{\infty} (\beta) \geq \frac{1}{5} e^{y- \frac{\delta^2 t}{2}} , \mathbf{I}^{\beta} \geq - y + \frac{\delta^2 t}{2} +z  \right)  +  6 e^{- 4 \ell (t)/\beta^2} \\
   & \leq  e^{-\left(\frac{2}{\beta^2}+ \frac{1}{2} \right) (y- \frac{\delta^2 t}{2} )}\mathbf{E}\left[ W_{\infty}^{\frac{2}{\beta^2}+\frac{1}{2}}(\beta)  ;   \mathbf{I}^{\beta} \geq -y+ \frac{\delta^2 t}{2} +z  \right]  + 6 e^{- 4 \ell (t)/\beta^2} ,
  \end{align}
where  we have used the Markov inequality. By applying Lemma \ref{martingale limit and minimum}, we conclude that for all  $0 \leq z\leq  y- \frac{\delta^2 t}{2}$,  
\begin{equation}
 e^{-\left(\frac{2}{\beta^2}+ \frac{1}{2} \right)(y- \frac{\delta^2 t}{2}) }\mathbf{E}\left[ W_{\infty}^{\frac{2}{\beta^2}+ \frac{1}{2}}(\beta)  ;   \mathbf{I}^{\beta} \geq -y+ \frac{\delta^2 t}{2} +z   \right]   \leq   C_{\eqref{martmin}}(\beta)  e^{- \frac{2}{\beta^2}(y- \frac{\delta^2 t}{2}) -\frac{1}{2}z } . 
\end{equation}  
This completes the proof.
\end{proof}

%
%
%
%
%
%

\section{Sharp Estimate}
\label{sec-sharp-estimate}
 
\subsection{Proof of Theorem \ref{thm-LDP}: substituting level sets with martingale limits in large deviations}
\label{sharp-estimate-Sec1} 
We introduce some notation first. 
 Fix a large time $t>0$ and    $s=pt$. For each $z \in \mathbb{R}$, define the first passage time of level $-\psi'(\kappa)s+z$  for the process $(V_{r}(u): u \in \mathcal{N}_{r})_{r \ge 0}$ by:  
\begin{equation}
  \tau (z) =\tau(z; t) :=     \inf \left\{ r>0: \ \exists \, u \in \mathcal{N}_{r} \text{ s.t. } V_{r}(u) \leq -\psi'(\kappa)s+z\right\} \label{def-tauz} . 
\end{equation}
Clearly, $\tau(z)$ is a  $[0,\infty]$-valued  stopping time with respect to $(\mathcal{F}_{r})_{r \geq 0}$. 
Additionally, we set  \begin{equation}
  \tau_{t}(z) := \min\{  \tau(z), t\}     
\end{equation}
In Lemma \ref{lem-good-event} we will see that in fact $\P(\tau(z)>t)$ is negligible w.r.t. $\P(\L_{t}(xt) \geq e^{at}/\sqrt{t})$.

\begin{figure}[t]
  \centering
  \includegraphics[height=7cm]{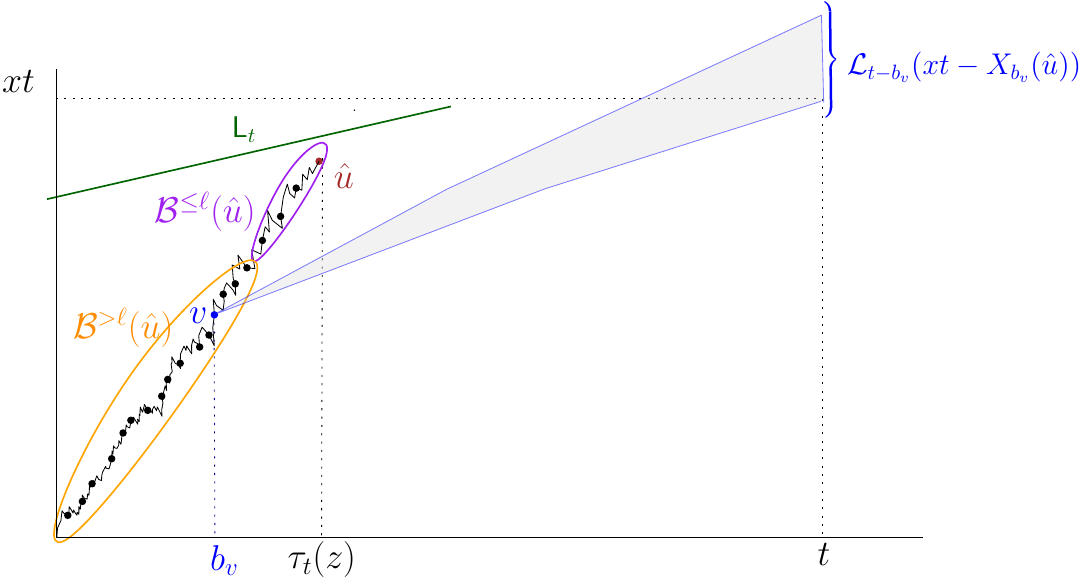} 
  \caption{The sets $\mathcal{B}^{>\ell}(\hat{u})$ and  $\mathcal{B}^{\leq \ell}(\hat{u})$ }\label{fig-Bhatu}
\end{figure}

For any two particles $u$, $v$ in the BBM. We denote  $v \prec u$ if $v$ is an ancestor of $u$; and   $v \preceq u$ if either $v \prec u$ or $v=u$. Define 
\begin{equation}
  \mathcal{B}(u) : = \{ v: \exists \, u' \preceq u \text{ s.t. }  v \text{ is the brother of } u' \} \cup \{u\} .
\end{equation} 
Let $ \hat{u}$  denote the particle alive at time $ \tau_{t}(z)$ that has the lowest position, i.e.,
\begin{equation}
 \hat{u} =  \hat{u}(t,z):=  \argmin_{u \in \mathcal{N}_{\tau_{t}(z) }} V _{\tau_{t}(z) }(u). 
\end{equation} 
Throughout this section, for each $v \in \mathcal{B}(\hat{u})$, we define 
\begin{equation}
  b_{v} = b_{v}( \hat{u}, \mathcal{B}(\hat{u})) := \begin{cases}
    \text{the birth time of } v ,& \text{ if } v \in  \mathcal{B}(\hat{u})\backslash \{\hat{u}\} \\
\tau_{t}(z) ,& \text{ if } v= \hat{u}. 
  \end{cases} 
\end{equation} 
 We divide $\mathcal{B}(\hat{u})$ into two subsets according to their genealogical distance to $\hat{u}$ (see Figure \ref{fig-Bhatu}). For $\ell < \tau_{t}(z)$, define:
   \begin{align}
     \mathcal{B}^{>\ell} (\hat{u}) &:= \{v \in \mathcal{B}(\hat{u}):   b_{v}  < \tau_{t}(z)-\ell \} \ ,  \text{ and }  \\
      \mathcal{B}^{\leq \ell} (\hat{u}) &:=   \{    v \in \mathcal{B}(\hat{u}):   \tau_{t}(z)-\ell    \leq b_{v} \leq \tau_{t}(z)  \}. 
   \end{align}  
In the following, for simplicity, we set  $
\Gamma[t,z] := -\psi'(\kappa) s  + (z-1,z]  \ \text{ for } z \in \mathbb{R}$.  
 
\begin{lemma}\label{lem-good-event}
  For  $\ell>0$, $L \geq 1$, define  
  \begin{align}
   G_{L}^{1} &:= \{   |\tau(z) -s|\leq  L\sqrt{s} \} , \label{eq-good-event-1} \\ 
  G_{\ell, L}^{2}  &:=  \left\{  | \mathcal{B}^{\leq \ell}(\hat{u}) |   \leq L  \right\}  \cap  \left\{ \forall \, r \in [\tau(z) -\ell, \tau(z)] ,  | V_{r}(\hat{u}) - V_{\tau(z)}(\hat{u})| \leq L \right\} . \label{eq-good-event-2}
    \end{align}
    Let $G_{\ell,L}:=  G_{L}^{1} \cap  G_{\ell, L}^{2}  $. 
 Then for fixed $\ell>0$,  $R>0$ we have 
 \begin{equation}\label{lem-tauz>t}
  \lim_{L \to \infty} \limsup_{t \to \infty}  \sup_{ |z| \leq R} e^{I(x,a)t} \, \P \left(   \mathbf{I}   \in \Gamma[t,z] , G_{\ell, L}^{c} \right)  = 0 .  
 \end{equation}
\end{lemma}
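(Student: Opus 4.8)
The plan is to decompose the bad event $\{\mathbf{I} \in \Gamma[t,z], G_{\ell,L}^c\}$ according to whether the failure comes from $G_L^1$ or from $G_{\ell,L}^2$, and to bound each piece by a quantity that is $o(e^{-I(x,a)t})$ uniformly in $|z|\le R$ as first $t\to\infty$ and then $L\to\infty$. The overarching principle is that, conditionally on the event $\{\mathbf{I}\in\Gamma[t,z]\}$, the process $(V_r(u))$ near the first passage time $\tau(z)$ behaves like the linearly transformed BBM conditioned on an extremely negative global minimum, whose fine structure is controlled by Lemma \ref{BRWCondMin} and Lemma \ref{martingale limit and minimum}. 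Since $\{\mathbf{I}\in\Gamma[t,z]\}\subset\{\tau(z)<\infty\}=\{\mathbf{I}\le -\psi'(\kappa)s+z\}$ and $\P(\mathbf{I}\le-\psi'(\kappa)s+z)=\Theta(e^{-I(x,a)t})$ by \eqref{eq-p-t-z-2}, it is equivalent to show that each conditional probability $\P(G_L^{1,c}\mid \mathbf{I}\in\Gamma[t,z])$, $\P(G_{\ell,L}^{2,c}\mid \mathbf{I}\in\Gamma[t,z])$ tends to $0$ as $t\to\infty$ then $L\to\infty$, uniformly in $|z|\le R$.

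First I would handle $G_L^1=\{|\tau(z)-s|\le L\sqrt s\}$. Recall $s=pt$ and $\psi'(\kappa)s=(1-\theta^2/2)pt$, so that $\{\mathbf{I}\le-\psi'(\kappa)s+z\}$ is exactly the event that the global minimum of the linear BBM drops below $-\psi'(\kappa)s+z$, i.e.\ is of order $-(1-\theta^2/2)s$. The argmin time $\mathfrak{s}$ of Lemma \ref{BRWCondMin}, conditioned on $\mathbf{I}\le -w$, satisfies $\mathfrak{s}=w/\psi'(\kappa)+O_{\P}(\sqrt w)$; applied with $w=\psi'(\kappa)s-z$ this gives $\mathfrak{s}=s+O_{\P}(\sqrt s)$ under the conditioning, uniformly for $|z|\le R$. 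The first passage time $\tau(z)$ is sandwiched: $\tau(z)\le\mathfrak{s}$ (the minimum is attained at or after first passage of the slightly higher level $-\psi'(\kappa)s+z$ vs.\ the running minimum), and conversely $\tau(z)$ cannot be much smaller than $s$ because passing the level $-\psi'(\kappa)s+z$ before time $s-L\sqrt s$ would force $\mathbf{I}_{s-L\sqrt s}\le-\psi'(\kappa)s+z$, an event of probability $\le e^{-\kappa(\psi'(\kappa)s-z)+o(\sqrt s)}=o(e^{-I(x,a)t})$ by \eqref{mintail} when $L$ is large (here one uses $\kappa\psi'(\kappa)=2/\theta^2\cdot(1-\theta^2/2)$ and a Gaussian/ballot-type estimate to absorb the $L\sqrt s$ shift). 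Quantitatively, $\P(\tau(z)\le s-L\sqrt s, \mathbf{I}\in\Gamma[t,z])\le\P(\mathbf{I}_{s-L\sqrt s}\le-\psi'(\kappa)s+z)$ and, since $\psi'(\kappa)(s)-\psi'(\kappa)(s-L\sqrt s)=\psi'(\kappa)L\sqrt s$, this is $e^{-I(x,a)t}\cdot e^{-\kappa\psi'(\kappa)L\sqrt s+\kappa|z|}$ up to constants, which vanishes relative to $e^{-I(x,a)t}$ once we send $t\to\infty$; and the upper bound $\tau(z)\le s+L\sqrt s$ follows from $\tau(z)\le\mathfrak{s}=s+O_{\P}(\sqrt s)$ together with the tightness of $(\mathfrak{s}-s)/\sqrt s$ from Lemma \ref{BRWCondMin}. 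Thus $\P(G_L^{1,c}\mid\mathbf{I}\in\Gamma[t,z])\to0$ as $t\to\infty$, then $L\to\infty$.

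Next, for $G_{\ell,L}^2$, the point is that near the passage time the spine $\hat u$ is, under the conditioning, an ordinary BBM path that has just achieved an atypically low value, and we must control (a) the number $|\mathcal B^{\le\ell}(\hat u)|$ of branch points along $\hat u$ in the last $\ell$ units of time before $\tau(z)$, and (b) the oscillation $\sup_{r\in[\tau(z)-\ell,\tau(z)]}|V_r(\hat u)-V_{\tau(z)}(\hat u)|$. Here I would invoke the spine change of measure of Lemma \ref{changeofp} with $\beta=\theta$ at the stopping time $\tau_t(z)$: under $\mathbf{Q}^{\theta,*}_{\tau_t(z)}\otimes\P$ restricted to $\mathcal F_\infty$ the density against $\P$ is $W_{\tau_t(z)}(\theta)$, the spine performs a Brownian motion with drift $\theta$ (equivalently $V_r(w_r)$ has drift $-\psi'(\theta)$-type behaviour) and branches at rate $2$, so in a fixed time window of length $\ell$ before $\tau_t(z)$ the number of spine branch points is a Poisson$(2\ell)$ variable and the spine displacement over that window is a Brownian increment with bounded drift; both have exponential tails not depending on $t$ or $z$, giving $\mathbf{Q}^{\theta,*}_{\tau_t(z)}\otimes\P(G_{\ell,L}^{2,c})\le C_\ell e^{-cL}$. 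Transferring back to $\P(\cdot\mid\mathbf I\in\Gamma[t,z])$ requires comparing $\mathbf{Q}^{\theta,*}$-probabilities with $\P(\cdot\mid\mathbf I\in\Gamma[t,z])$; this is where the moment bound \eqref{martmin} and the sharp tail \eqref{tailM} enter, controlling $\mathbf{E}[W_{\tau_t(z)}(\theta)^{\kappa+\delta};\cdots]$ and hence allowing a Hölder/Markov step to pass from a $\mathbf{Q}$-rare event to a $\P(\cdot\mid\mathbf I\in\Gamma)$-rare event. Combining, $\P(G_{\ell,L}^{2,c}\mid\mathbf I\in\Gamma[t,z])\le C_\ell e^{-cL}+o_t(1)\to0$ as $t\to\infty$ then $L\to\infty$, uniformly in $|z|\le R$.

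The main obstacle is the transfer argument in the last step: making rigorous that ``conditioned on $\mathbf I\in\Gamma[t,z]$, the genealogy and trajectory of $\hat u$ near $\tau(z)$ look like those of the drifted spine'' without circularity, since the clean conditional description of Lemma \ref{BRWCondMin} is an asymptotic-in-$z$ statement whereas here the conditioning level $\psi'(\kappa)s-z$ grows with $t$ (good) but $z$ stays bounded (so the right object is really $\mathbf I\le -\psi'(\kappa)s+O(1)$, which is within the scope of \eqref{tailM}). Concretely one should (i) write $\P(\mathbf I\in\Gamma[t,z],G^c)$ via the first passage decomposition at $\tau(z)=\tau$, conditioning on $\mathcal F_\tau$ and using that after $\tau$ the BBM must not push the minimum far below while the pre-$\tau$ part must realize first passage with the prescribed local behaviour; (ii) for the pre-$\tau$ part, use the spine identity \eqref{eq-cop-22} to rewrite expectations of functionals of $\hat u$ near $\tau$ as $\mathbf{Q}^{\theta,*}$-expectations weighted by $1/W_\tau(\theta)$, then bound $1/W_\tau(\theta)\le 1$-type estimates fail, so instead control the relevant ratio by Cauchy--Schwarz against the $(\kappa+\delta)$-moment bound \eqref{martmin}; (iii) for the post-$\tau$ part use \eqref{mintail} started afresh from $\hat u$'s position. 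Handling the uniformity in $|z|\le R$ throughout is routine once the $z$-dependence is tracked as multiplicative factors $e^{\pm\kappa|z|}$ and $C_{\eqref{martmin}}(\theta)$-type constants that are continuous hence bounded on the compact parameter range.
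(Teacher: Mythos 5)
Your overall intuition (spine picture near $\tau(z)$, fluctuations of order $\sqrt{s}$, Poissonian branch times along the spine) is the right one, but as written the proposal has two genuine gaps. First, in the $G_L^{1}$ part, the bound for $\{\tau(z)\le s-L\sqrt{s}\}$ is not justified: the only available tail estimate \eqref{mintail} gives $\P(\mathbf{I}_{s-L\sqrt{s}}\le -\psi'(\kappa)s+z)\le e^{-\kappa\psi'(\kappa)s+\kappa z}=e^{-I(x,a)t+\kappa z}$, i.e.\ \emph{no} gain over $e^{-I(x,a)t}$, and the extra factor $e^{-\kappa\psi'(\kappa)L\sqrt{s}}$ you assert does not follow from anything quoted (nor is it the right order: a time-restricted Chernoff bound with a tilt $\lambda=\kappa+\Theta(L/\sqrt{s})$, or the paper's computation, yields a gain of size $e^{-\frac{\psi'(\kappa)^2}{2\psi''(\kappa)}L^{2}}$, not $e^{-cL\sqrt{s}}$). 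The ``Gaussian/ballot-type estimate'' needed here is precisely the missing step and must be proved, since the event is a moderate deviation of the minimum \emph{before} a given time, which \eqref{mintail} does not see. The complementary tail $\{\tau(z)>s+L\sqrt{s}\}$ via $\tau(z)\le\mathfrak{s}$ and the conditional tightness of $(\mathfrak{s}-s)/\sqrt{s}$ from Lemma \ref{BRWCondMin} (after comparing the window event with the one-sided event $\{\mathbf{I}\le-\psi'(\kappa)s+z+1\}$, whose probabilities are comparable) can indeed be made to work.

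Second, and more seriously, for $G^{2}_{\ell,L}$ you tilt with the additive martingale at parameter $\beta=\theta$. This is the wrong tilt: the relevant density is $W(b)=\sum_u e^{-\kappa V(u)}$ with $b=2/\theta$, for then, stopping at the spine's first passage time $\tau^{w}(z)$, the Radon--Nikodym weight on $\{\tau^{w}(z)=\tau(z)\}$ is the \emph{deterministic} constant $e^{\kappa V_{\tau^{w}(z)}(w)}=e^{-I(x,a)t+\kappa z}$ (Lemma \ref{lem-cop33}), so the factor $e^{I(x,a)t}$ is absorbed exactly and no comparison between the tilted law and $\P(\cdot\mid \mathbf{I}\in\Gamma[t,z])$ is needed. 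With $\beta=\theta$ the spine's $V$-process has drift $+\psi'(\kappa)$ (it moves away from the target line) and the density is $e^{-V_{\tau}(w)}\approx e^{\psi'(\kappa)s}$, off from $e^{I(x,a)t}=e^{\kappa\psi'(\kappa)s}$ by an exponential factor; the H\"older/Cauchy--Schwarz ``transfer'' you propose to patch this with (via \eqref{martmin}, \eqref{tailM}) is exactly the step you flag as the main obstacle and is not carried out, so the argument is incomplete there. The paper's route avoids all of this: after the $\kappa$-tilt at $\tau^{w}(z)$, the spine's $V$-path is a Brownian motion with drift $-\psi'(\kappa)$ and variance $\psi''(\kappa)$ stopped at the line, whence $G_L^{1}$ follows from the inverse Gaussian first-passage law (Lemma \ref{lem-BMhitting}) with gain $e^{-cL^{2}}$, and $G^{2}_{\ell,L}$ follows because, given the spine path, the branch times are a rate-$2$ Poisson process and the spine's oscillation over a window of length $\ell$ has a law independent of $t$ and $z$, hence tight.
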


 Next, we introduce truncated versions of $\L_{t}[xt,\infty)$ and $W_{\infty}(\theta)$:  For   $ \ell< \tau_{t}(z)   $, define 
\begin{equation}
 \L_{t}^{> \ell} (x t) :=    \sum_{v  \in \mathcal{B}^{>\ell}(\hat{u})} \L^{(v)}_{t-b_{v}}(xt- X_{b_{v}}(\hat{u})) \ \text{ and }  \ W^{> \ell}_{\infty}(\theta) := \sum_{v \in \mathcal{B}^{>\ell}(\hat{u})}  e^{-V_{b_{v}}(\hat{u})} W^{(v)}_{\infty}(\theta). 
\end{equation}

\begin{lemma}\label{lem-truncated-level-set}
For any fixed $R>0$, the following assertions hold:
 \begin{enumerate}[(i)]
  \item  For  any $\epsilon>0$,   uniformly in $|z| \leq R  $  we have 
  \begin{align}
  & \lim_{\ell \to \infty} \limsup_{t \to \infty}  e^{I(x,a)t} \, \P \left(   \L_{t}^{ > \ell} (x t) \geq \frac{\epsilon}{\sqrt{t}} e^{at}  , \mathbf{I} \in \Gamma[t,z] \right) = 0 , \label{eq-distant-relative-level}\\
 & \lim_{\ell \to \infty} \limsup_{t \to \infty}   e^{I(x,a)t} \, \P \left(   W_{\infty}^{ > \ell } (\theta) \geq  \epsilon \, e^{\psi'(\kappa) s}   , \mathbf{I} \in \Gamma[t,z] \right) = 0 .\label{eq-distant-relative-mart}
  \end{align}
  \item  For any fixed $K>0$, uniformly in $|z| \leq R  $, we have 
\begin{equation}\label{eq-distant-relative-min} 
 \lim_{\ell \to \infty}  \limsup_{t \to \infty}  
   e^{I(x,a)t} \,   \P \left( \min_{v \in \mathcal{B}^{>\ell}(\hat{u})} \{V_{b_{v}}(\hat{u}) + \mathbf{I}^{(v)}\} \leq  \mathbf{I}+K ,     \mathbf{I} \in \Gamma[t,z] \right) = 0 . 
\end{equation}
\item  Recall that $ \s$ is defined in \eqref{eq-def-ss}.
  Uniformly in $|z| \leq R  $, we have 
\begin{equation}\label{eq-sstar-tauz-1}
\lim_{K \to \infty}  \limsup_{t \to \infty} e^{I(x,a)t} \P \left(  | \s - \tau_{t} (z)| > K ,   \mathbf{I}  \in  \Gamma[t,z]   \right) = 0 
\end{equation} 
\item Uniformly in $|z| \leq R  $, we have  
\begin{align}
 \lim_{\ell \to \infty}  \limsup_{t \to \infty}  
    e^{I(x,a)t} \,  \P \bigg( \max_{v \in \mathcal{B}^{>\ell}(\hat{u})}   \{ X_{b_{v}}(\hat{u})+ M^{(v)}_{t-b_{v}}\}   
     \geq  X_{\tau_{t}(z)}(\hat{u})  + \sqrt{2}[t-\tau_{t}(z)] ,
    \mathbf{I} \in \Gamma[t,z] \bigg)  =0 .\,\, \, \label{eq-distant-relative-max}
   \end{align}
 \end{enumerate}  
\end{lemma}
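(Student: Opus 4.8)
The four parts all say that the sub-BBMs splitting off the spine $\hat u$ well before the passage time $\tau_t(z)$ — the distant relatives $\mathcal{B}^{>\ell}(\hat u)$ — are negligible: for the level set, for the martingale limit, for the global minimum, and for the maximum. The plan is to prove them in a single framework. By Lemma~\ref{lem-good-event} it suffices to intersect with the good event $G_{\ell,L}$, so that $\tau_t(z)=\tau(z)\in[s-L\sqrt s,\,s+L\sqrt s]$ and the tube is controlled near $\tau(z)$; the cost of $G_{\ell,L}^{c}$ is $o(e^{-I(x,a)t})$ uniformly in $|z|\le R$ and is removed by sending $L\to\infty$ last. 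On $G_{\ell,L}$ I condition on $\mathcal{F}_{\tau_t(z)}$: this fixes $\hat u$, the set $\mathcal{B}^{>\ell}(\hat u)$, the branch times $b_v$, the positions $X_{b_v}(\hat u)$, and the state at time $\tau_t(z)$ of each $X^{(v)}$; by the branching property the sub-BBMs issued from the particles of $\mathcal{N}_{\tau_t(z)}$ are conditionally independent. Two facts drive every estimate: (a) since $\tau(z)$ is a \emph{first} passage time, $V_r(\hat u)$ stays above the target level on $[0,\tau_t(z))$, whence $X_{b_v}(\hat u)\le\mathsf{L}_t(b_v)+O(R)$ and, on the rare event, the spine's $V$-value at $b_v$ exceeds the level by $\asymp\psi'(\kappa)(\tau_t(z)-b_v)$; (b) from $\mathbf{I}=\mathbf{I}_{\tau_t(z)}\wedge\min_{w\in\mathcal{N}_{\tau_t(z)}}\{V_{\tau_t(z)}(w)+\mathbf{I}^{(w)}\}$, the event $\{\mathbf{I}\in\Gamma[t,z]\}$ forces $\mathbf{I}^{(w)}\ge\mathbf{I}-V_{\tau_t(z)}(w)$ for each $w$. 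Both become quantitatively stronger the further $b_v$ is from $\tau_t(z)$, which is the source of the decay in $\ell$ (resp.\ $K$).

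Each part then becomes a union bound over $v\in\mathcal{B}^{>\ell}(\hat u)$ of a per-subtree probability, estimated by the tools already in place. For \eqref{eq-distant-relative-level} one bounds $\mathbf{P}(\L^{(v)}_{t-b_v}(xt-X_{b_v}(\hat u))\ge\tfrac{\epsilon}{\sqrt t}e^{at},\ \mathbf{I}^{(v)}\ge\mathbf{I}-V_{b_v}(\hat u)\mid\mathcal{F}_{\tau_t(z)})$ by combining Lemma~\ref{lem-rough-bnd} with Lemma~\ref{Moderate-deviation}(ii) — the constraint on $\mathbf{I}^{(v)}$ supplying an extra factor $e^{-c(\tau_t(z)-b_v)}$ — exactly as in the proof of \eqref{eq-cliam-1}; the statement \eqref{eq-distant-relative-mart} follows from $W^{>\ell}_\infty(\theta)=\sum_v e^{-V_{b_v}(\hat u)}W^{(v)}_\infty(\theta)$, fact (a), and the tail bound \eqref{martail}. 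For \eqref{eq-distant-relative-min} one has $\mathbf{P}(V_{b_v}(\hat u)+\mathbf{I}^{(v)}\le\mathbf{I}+K\mid\mathcal{F}_{\tau_t(z)})\le e^{-\kappa(\psi'(\kappa)s+V_{b_v}(\hat u)-O(R)-K)}$ by Lemma~\ref{martingale limit and minimum}(i), again carrying a factor exponentially small in $\tau_t(z)-b_v$. For \eqref{eq-distant-relative-max} the overshoot is $(X_{\tau_t(z)}(\hat u)-X_{b_v}(\hat u))-\sqrt2(\tau_t(z)-b_v)\ge(\tfrac\theta2+\tfrac1\theta-\sqrt2)(\tau_t(z)-b_v)\ge c(\tau_t(z)-b_v)$ by fact (a) (here $\tfrac\theta2+\tfrac1\theta>\sqrt2$ since $\theta<\sqrt2$), and one applies the many-to-one bound $\mathbf{P}(M^{(v)}_r\ge\sqrt2 r+y)\lesssim(1+y^{+})e^{-\sqrt2 y}$. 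Finally \eqref{eq-sstar-tauz-1}: on $\{\mathbf{I}\in\Gamma[t,z]\}$ the minimum is attained neither before $\tau_t(z)$ (first passage) nor by a distant relative (by \eqref{eq-distant-relative-min}), so $|\s-\tau_t(z)|>K$ forces $\mathbf{I}^{(w)}$ for some $w\in\mathcal{N}_{\tau_t(z)}$ to be within $O(1)$ of its extreme value yet attained only after time $K$; this is ruled out, up to $o(e^{-I(x,a)t})$ as $K\to\infty$, by the truncated moment bound \eqref{martmin}, which encodes that a near-extremal minimum is reached early.

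To conclude one sums the per-subtree bounds over $\mathcal{B}^{>\ell}(\hat u)$ and takes expectations. Under the spine change of measure $\mathbf{Q}^{\theta,*}_{\tau_t(z)}\otimes\mathbf{P}$ (Lemma~\ref{changeofp}) the branch points along $\hat u$ form a Poisson set of intensity $2$, so there are $O(1)$ relatives per unit of birth time, uniformly in $t$; together with the geometric weight $e^{-c(\tau_t(z)-b_v)}$ and $\tau_t(z)-b_v\ge\ell$, the total is at most some $\eta(\ell)$ that is bounded uniformly in $t$ and vanishes as $\ell\to\infty$. Against the remaining factor $\mathbf{E}[\,\cdot\;;\,\mathbf{I}\in\Gamma[t,z]]=\Theta(e^{-I(x,a)t})$ (Lemma~\ref{BRWCondMin}) this yields $e^{I(x,a)t}\,\mathbf{P}(\text{bad},\,\mathbf{I}\in\Gamma[t,z])\lesssim\eta(\ell)+(\text{cost of }G_{\ell,L}^{c})$, uniformly in $|z|\le R$; sending $t\to\infty$, then $L\to\infty$, then $\ell\to\infty$ (resp.\ $K\to\infty$) gives all four claims.

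I expect the main obstacle to be the coupling between the random, order-$t$ tube $\mathcal{B}^{>\ell}(\hat u)$ — whose branch data $(b_v,X_{b_v}(\hat u))$ are what enter the per-subtree estimates — and the rare event $\{\mathbf{I}\in\Gamma[t,z]\}$: the estimates must be decoupled while keeping a genuine, $t$-independent gain in $\ell$. The clean device is to work under $\mathbf{Q}^{\theta,*}_{\tau_t(z)}\otimes\mathbf{P}$, where the $V$-value of the spine is an explicit Brownian motion with drift $\psi'(\kappa)>0$ and variance rate $\theta^2$, so that conditionally on reaching the level near time $s$ its path concentrates on the straight-line interpolation — giving fact (a) with overwhelming probability — while the competing configurations, in which the spine loiters near the level at an early time, are themselves exponentially costly. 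The price is that one must then bound the Radon--Nikodym density $W_{\tau_t(z)}(\theta)$ from below on $\{\mathbf{I}\in\Gamma[t,z]\}$, which is precisely what the moment estimate \eqref{martmin} and the asymptotics of Lemma~\ref{BRWCondMin} (via $e^{\mathbf{I}}W_\infty(\theta)\Rightarrow\mathcal{Z}>0$) provide.
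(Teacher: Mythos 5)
Your overall architecture (restrict to the good event of Lemma \ref{lem-good-event}, decompose along the ancestral line of $\hat u$, obtain per\textendash subtree bounds with a gain exponential in $\tau_t(z)-b_v$, use the Poisson structure of branch times along the spine, and sum) is the same as the paper's, but the central decoupling device you propose does not work as stated. You tilt with the parameter $\theta$, i.e.\ you work under $\mathbf{Q}^{\theta,*}_{\tau_t(z)}\otimes\P$. Under that measure the spine's $V$-value is a Brownian motion with drift $+\psi'(\kappa)$, i.e.\ it drifts \emph{away} from the level $-\psi'(\kappa)s+z$; the spine is therefore typically \emph{not} the hitting particle $\hat u$, so the identification of $\mathcal{B}(\hat u)$ with the brothers of the spine (and hence the ``Poisson set of intensity $2$'' of birth times $b_v$, and the conditional independence of the subtrees given the spine $\sigma$-field) is not justified. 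Moreover the Radon--Nikodym density at the stopping time is the random variable $W_{\tau_t(z)}(\theta)$, and your claim that a lower bound for it on $\{\mathbf{I}\in\Gamma[t,z]\}$ is ``precisely what \eqref{martmin} and Lemma \ref{BRWCondMin} provide'' is not correct: \eqref{martmin} is an \emph{upper} bound on a truncated moment, and Lemma \ref{BRWCondMin} concerns $e^{\mathbf{I}}W_\infty(\theta)$, not a pointwise lower bound on $W_{\tau_t(z)}(\theta)$. The exponential bookkeeping also fails to close: on the target event $W_{\tau_t(z)}(\theta)$ is of order $e^{\psi'(\kappa)s}$, whereas the prefactor you must produce is $e^{-I(x,a)t}=e^{-\kappa\psi'(\kappa)s}$ with $\kappa=2/\theta^2>1$, so the missing factor $e^{-(\kappa-1)\psi'(\kappa)s}$ would have to come from the $\mathbf{Q}^{\theta,*}$-probability of the spine reaching the line, which your argument never quantifies. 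The paper avoids all of this by tilting with $b=2/\theta$ (weight $e^{-\kappa V}$): then the spine hits the line at time $\approx s$ with drift $-\psi'(\kappa)$, it \emph{is} the particle $\hat u$ on the relevant event, and the density at the first-passage time is deterministic, giving the exact identity $e^{I(x,a)t}\,\P(A,\mathbf{I}\in\Gamma[t,z])=e^{\kappa z}\,(\mathbf{Q}^{b,*}_{\tau^{w}(z)}\otimes\P)(A,\dots)$ of Lemma \ref{lem-cop33} and \eqref{eq-change-of-measure}; this is the step your plan is missing.

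Two smaller points. For part (iii) you invoke \eqref{martmin} to rule out that a near-extremal subtree minimum is attained after time $K$; that bound does not encode this. The paper instead uses that, for the finitely many ($\leq L$) near relatives, $\P\bigl(\min\{V_r(u):u\in\mathcal{N}_r,\ r>K\}\leq 0\bigr)\to 0$ as $K\to\infty$, a consequence of the a.s.\ linear growth coming from \eqref{eq-HS-maximum}. For part (i), once the $b$-change of measure has extracted $e^{-I(x,a)t}$, a first-moment (Markov) bound on $\L_t^{>\ell}(xt)$ conditioned on the spine $\sigma$-field suffices, combined with a cone event for the spine path ($G^4_\ell(w)$, controlled by a Brownian first-passage estimate); your heavier per-subtree rare-event bounds via Lemma \ref{lem-rough-bnd} and Lemma \ref{Moderate-deviation}(ii) are not needed there and, without the correct change of measure, would still leave you the problem of averaging over the law of $(b_v,X_{b_v}(\hat u))$ conditioned on the rare event.
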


Naturally we define 
\begin{align}
   \L_{t}^{ \leq \ell} (x t)  &:=  \L_{t}  (x t)- \L_{t}^{ > \ell} (x t)=  \sum_{v \in \mathcal{B}^{\leq \ell}(\hat{u})} \L^{(v)}_{t-b_{v}}(xt- X_{b_{v}}(\hat{u})), \\
  W_{\infty}^{ \leq \ell}(\theta) &  :=  W_{\infty}(\theta) -W^{> \ell}_{\infty}(\theta) =    \sum_{v \in \mathcal{B}^{\leq \ell}(\hat{u})}   e^{-V(v)} W^{(v)}_{\infty}(\theta) . 
\end{align}
In the following, we use the standard notation  $A \sdif B:= (A \backslash B) \cup (B \backslash A)$.

\begin{lemma}\label{lem-replace-level-by-mart}
Let $c_{\theta}:= \theta \sqrt{2\pi(1-p)}$. Define  
\begin{equation}
  C_{t}(z)=  \exp \left(\frac{p}{1-p}   \frac{(1-{\theta^2}/{2})^2    }{2\theta^2  } \frac{(\tau_t(z)-s)^2}{s}   \right) .
\end{equation} 
  Fix $y>0$, $\ell>0$ and $R>0$. 
  For any $\epsilon>0$,  
  uniformly in $|z| \leq R$ and $|\lambda_{i}-1| < \epsilon$, $i=1,2$ we have   
  \begin{equation}
   \lim_{\epsilon \downarrow 0} \limsup_{t \to\infty} e^{I(x,a)t}      \P \left(  \left\{ \L_{t}^{\leq \ell}  (x t) \geq \frac{\lambda_{1} y \, e^{at}}{ \sqrt{t}}  \right\} \sdif  \left\{   \frac{W^{\leq \ell}_{\infty}(\theta)}{C_{t}(z)}\geq   \lambda_{2} c_{\theta}  
   y e^{\psi'(\kappa) s}  \right\},  \mathbf{I}  \in  \Gamma[t,z] \right) = 0. 
  \end{equation}
\end{lemma}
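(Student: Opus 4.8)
The plan is to compare, conditionally on $\mathcal{F}_{\tau_t(z)}$, the truncated level set $\L_t^{\leq\ell}(xt)$ with the truncated martingale limit $W_\infty^{\leq\ell}(\theta)$ particle by particle over the finitely many $v\in\mathcal{B}^{\leq\ell}(\hat u)$. Fix such a $v$ and write $r_v:=b_v$ and $\Delta_v:=X_{r_v}(\hat u)$. The sub-BBM $X^{(v)}$ started at $(r_v,\Delta_v)$ is independent of $\mathcal{F}_{\tau_t(z)}$, runs for time $t-r_v$, and must populate the level $xt-\Delta_v$. Since $v\in\mathcal{B}^{\leq\ell}(\hat u)$ we have $r_v\in[\tau_t(z)-\ell,\tau_t(z)]$ and, on the good event $G_{\ell,L}$ from Lemma \ref{lem-good-event}, $\tau_t(z)=\tau(z)\in[s-L\sqrt s,s+L\sqrt s]$ and $V_{r_v}(\hat u)$ is within $O(L)$ of $-\psi'(\kappa)s+z$; hence $xt-\Delta_v=\theta(t-r_v)+O(\sqrt t)$, putting us exactly in the moderate-deviation regime governed by Lemma \ref{Moderate-deviation} and Lemma \ref{lem-enhanced-convergence}. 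The first step is therefore to restrict to $G_{\ell,L}$ at negligible cost via \eqref{lem-tauz>t}, and also to discard, using part (i) of Lemma \ref{lem-truncated-level-set} applied with a small parameter, the contribution of any $v$ whose displacement $\Delta_v$ is so atypical that its sub-BBM's behaviour is not controlled.

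The core computation is the following asymptotic equivalence, valid on $G_{\ell,L}$ up to the error terms that \eqref{eq-exp-condexp-diff} and \eqref{eq-exp-rate-convergence} make summable: for each fixed $v$, conditionally on $\mathcal{F}_{\tau_t(z)}$,
\begin{equation}
  \L^{(v)}_{t-r_v}(xt-\Delta_v)\ =\ \big[1+o(1)\big]\,\frac{e^{at}}{c_\theta\sqrt t}\,e^{-V_{r_v}(\hat u)}\,e^{-\frac{(\Delta_v-bs)^2-2(\Delta_v-bs)(\theta(s-r_v))}{2(t-r_v)}+\cdots}\,W^{(v)}_\infty(\theta),
\end{equation}
where the exponential prefactor comes from $\mathbf E[\L^{(v)}_{t-r_v}(xt-\Delta_v)\mid\mathcal{F}_{\tau_t(z)}]$ computed by the many-to-one formula and the Gaussian tail asymptotic exactly as in \eqref{eq-exp-Z-u-t-s}, and $W^{(v)}_\infty(\theta)$ appears as the almost-sure limit of $\L^{(v)}_{t-r_v}(\hat\beta(t-r_v))/\mathbf E[\cdots]$ via Lemma \ref{lem-enhanced-convergence} with $\hat\beta=\theta+O(\sqrt t/(t-r_v))$. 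Summing over $v\in\mathcal{B}^{\leq\ell}(\hat u)$ and collecting the deterministic Gaussian correction into $C_t(z)$ — one checks that the only term in the exponent that survives uniformly over $|z|\le R$ and does not cancel against $W_\infty^{\leq\ell}(\theta)=\sum_v e^{-V_{r_v}(\hat u)}W^{(v)}_\infty(\theta)$ is precisely $\tfrac{p}{1-p}\cdot\tfrac{(1-\theta^2/2)^2}{2\theta^2}\cdot\tfrac{(\tau_t(z)-s)^2}{s}$, because $r_v=\tau_t(z)+O(\ell)$ and $bs-\Delta_v=V_{r_v}(\hat u)/\theta+O(1)$ — yields
\begin{equation}
  \L_t^{\leq\ell}(xt)\ =\ \big[1+o(1)\big]\,\frac{e^{at}}{c_\theta\sqrt t}\,C_t(z)\,W_\infty^{\leq\ell}(\theta)\qquad\text{on }G_{\ell,L}.
\end{equation}
The symmetric-difference statement then follows: if the multiplicative $o(1)$ has absolute value below $\epsilon$, then $\{\L_t^{\leq\ell}(xt)\ge\lambda_1 y e^{at}/\sqrt t\}$ and $\{W_\infty^{\leq\ell}(\theta)/C_t(z)\ge\lambda_2 c_\theta y e^{\psi'(\kappa)s}\}$ can disagree only when $W_\infty^{\leq\ell}(\theta)/C_t(z)$ lies in a window of multiplicative width $O(\epsilon+|\lambda_1-\lambda_2|)$ around $\lambda_2 c_\theta y e^{\psi'(\kappa)s}$; intersecting with $\{\mathbf I\in\Gamma[t,z]\}$ and using the tail bound \eqref{martail} for $W_\infty(\theta)$ together with $\mathbf P(\mathbf I\in\Gamma[t,z])\le e^{-I(x,a)t+O(1)}$ from \eqref{eq-p-t-z-2} bounds this by $o(1)\cdot e^{-I(x,a)t}$ after letting $\epsilon\downarrow0$.

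The main obstacle is making the particle-by-particle asymptotic equivalence uniform: the error in $\L^{(v)}_{t-r_v}/\mathbf E[\cdots]\to W^{(v)}_\infty(\theta)$ supplied by Lemma \ref{lem-enhanced-convergence} holds almost surely but not with a uniform rate, so one cannot simply take expectations. The remedy is the standard device already used in the proof of Proposition \ref{prop-up-to-constant}: replace $W^{(v)}_\infty(\theta)$ by the conditional expectation $\mathbf E[\L^{(v)}_{t-r_v}(\hat\beta(t-r_v))\mid\mathcal{F}^{(v)}_{r'}]/\mathbf E[\cdots]$ at an intermediate time $r'\to\infty$ slowly with $t$, controlling the difference by \eqref{eq-exp-condexp-diff} whose bound $4e^{-c_\theta r'}$ is deterministic, then replace that conditional expectation by $W^{(v)}_{r'}(\theta)$ and finally by $W^{(v)}_\infty(\theta)$ using the $L^1$ rate \eqref{eq-exp-rate-convergence}; since $|\mathcal{B}^{\leq\ell}(\hat u)|\le L$ on $G_{\ell,L}$, only boundedly many such replacements occur and the total error is $O(L)\cdot o(1)$, which is absorbed. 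A secondary point requiring care is that $\hat\beta=(xt-\Delta_v)/(t-r_v)$ is itself random but, on $G_{\ell,L}$, stays within $\delta_t:=O(\sqrt t/(t-r_v))=O(t^{-1/2})$ of $\theta$, so the $\sup_{|\hat\beta-\theta|\le\delta_t}$ form of Lemma \ref{lem-enhanced-convergence} and Lemma \ref{Moderate-deviation} applies verbatim.
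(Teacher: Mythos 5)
Your overall architecture (restrict to the good event $G_{\ell,L}$, compute $\mathbf{E}[\L^{(v)}_{t-b_v}(xt-X_{b_v}(\hat u))\mid\mathcal{F}_{\tau_t(z)}]$ by many-to-one plus Gaussian asymptotics, identify the correction $C_t(z)$, and reduce the symmetric difference to the event that $W^{\leq\ell}_\infty(\theta)/C_t(z)$ falls in a multiplicative window of width $O(\epsilon)$ around $c_\theta y e^{\psi'(\kappa)s}$) parallels the paper, but your final step contains a genuine gap. You claim that the window event, intersected with $\{\mathbf{I}\in\Gamma[t,z]\}$, is bounded by $o(1)\,e^{-I(x,a)t}$ ``using the tail bound \eqref{martail} for $W_\infty(\theta)$ together with $\mathbf{P}(\mathbf{I}\in\Gamma[t,z])\le e^{-I(x,a)t+O(1)}$ from \eqref{eq-p-t-z-2}.'' Neither bound can produce the required $o(1)$ factor: \eqref{eq-p-t-z-2} alone gives only $O(1)\,e^{-I(x,a)t}$, and \eqref{martail} is a one-sided upper bound with an unspecified constant, useless for showing that a difference of two nearby tail levels is small \emph{on the rare event}. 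Conditionally on $\{\mathbf{I}\in\Gamma[t,z]\}$, the quantity $e^{-\psi'(\kappa)s}W^{\leq\ell}_\infty(\theta)$ is of order one (it converges in law by Lemma \ref{BRWCondMin}), so the conditional probability of the $O(\epsilon)$-window is governed by the mass that the limiting law puts on a shrinking interval. To make this vanish as $\epsilon\downarrow 0$ you must know that this limit law has no atoms. This is exactly the point the paper handles by the spine change of measure (Lemma \ref{lem-cop33} / \eqref{eq-change-of-measure}): under $\mathbf{Q}^{*}_{\tau^w(z)}\otimes\P$ the factor $e^{I(x,a)t}$ is absorbed into $e^{\kappa z}$, the sum $\Sigma_{\eqref{eq-rewite-W}}(\ell)$ has a fixed law not depending on $t,z$, independent of $C^w_t(z)$ (whose limit $\mathcal{C}$ is identified through the inverse-Gaussian hitting time, Lemma \ref{lem-BMhitting}), and the window probability tends to zero because $W_\infty(\theta)$ — hence $\Sigma(\ell)/\mathcal{C}$ — has a density (via \cite{Stigum66}, \cite{Neveu88}, \cite{Kyprianou04}). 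Your proposal cites none of these ingredients and the tools you do cite cannot replace them.

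A secondary weakness, likely repairable but not addressed: your approximation-error control (via \eqref{eq-exp-condexp-diff} and \eqref{eq-exp-rate-convergence}) yields bad events of unconditional probability $o(1)$, whereas you need their intersection with $\{\mathbf{I}\in\Gamma[t,z]\}$ to be $o(e^{-I(x,a)t})$; since the global minimum constrains precisely the sub-BBMs attached to $\mathcal{B}^{\leq\ell}(\hat u)$, these events are correlated with the rare event and you cannot simply multiply. The paper's change of measure (or, equivalently, a careful strong-Markov decomposition at $\tau(z)$ that keeps the $e^{-I(x,a)t}$ cost attached to the hitting event and applies bounded convergence to the conditional errors) is what makes this step legitimate; as written, ``the total error is $O(L)\cdot o(1)$, which is absorbed'' does not establish the required bound.
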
 
Based on Lemma \ref{lem-truncated-level-set} and Lemma  \ref{lem-replace-level-by-mart}, now we are ready to prove Theorem \ref{thm-LDP}.  

\begin{proof}[Proof of Theorem  \ref{thm-LDP}]  
  For $t>0$ and $z \in \mathbb{R}$ we define 
  \begin{equation}\label{eq-normalized-prob-L-I}
    \bar{\mathrm{Pr}}_{\eqref{eq-normalized-prob-L-I}}(t,z) :=  e^{I(x,a)t} \P \left(  \L_{t}  (x t) \geq \frac{y}{\sqrt{t}} e^{at} ,  
    \mathbf{I}  \in  \Gamma[t,z] \right) .
  \end{equation}
  Fix an arbitrary small $\delta>0$.  According to Proposition \ref{prop-up-to-constant},    there exists large constant $R_{\delta}>0$ satisfying that for all $R \geq R_{\delta}$, 
\begin{equation} 
   \limsup_{t \to \infty}  \left| e^{I(x,a)t}\P \left(  \L_{t}  (x t) \geq \frac{y}{\sqrt{t}} e^{at}  \right) - \sum_{|z| \leq R} \bar{\mathrm{Pr}}_{\eqref{eq-normalized-prob-L-I}}(t,z)  \right|  < \delta . \label{eq-apro-0}
\end{equation} 
Similar to $C_{t}(z)$, we define $\mathcal{C}_t:=  \exp \left(\frac{p}{1-p}   \frac{(1-{\theta^2}/{2})^2    }{2\theta^2  } \frac{(\mathfrak{s}-s)^2}{s}   \right)$ and claim that 
for each fixed $R>0$ and $|z|\leq R$,  as $t \to \infty$
\begin{equation}
  \begin{aligned} \label{eq-level-set-min}
  \mathrm{Er}_{\eqref{eq-level-set-min}}(t,z)
  :=e^{I(x,a)t}    \mathbf{P} \left(  \left\{ \L_{t}  (x t) \geq \frac{y\, e^{at}}{\sqrt{t}}  \right\} \sdif  \left\{   \frac{W_{\infty}(\theta) }{\mathcal{C}_{t}}\geq c_{\theta}  y e^{\psi'(\kappa) s}  \right\} ,  \mathbf{I} \in \Gamma[t,z]   \right)  \to 0. 
  \end{aligned}
\end{equation} 
Then it follows from the assertion \eqref{eq-level-set-min} and  the fact that $|\P(A)-\P(B)| \leq \P(A \sdif B)$ that  uniformly in $|z|\leq R$ we have 
\begin{align}
\lim_{t \to \infty}  \bar{\mathrm{Pr}}_{\eqref{eq-normalized-prob-L-I}}(t,z)  
  = \lim_{t \to \infty} e^{I(x,a)t}    \mathbf{P} \left( W_{\infty}(\theta) \geq c_{\theta}y \, \mathcal{C}_{t} e^{\psi'(\kappa)s}, \mathbf{I} \in - \psi'(\kappa)s+ (z-1,z] \right). 
\end{align}     
Writing $\Gamma_{+}[t,z]:= z-\psi'(\kappa)s$ the right endpoint of $\Gamma[t,z]$, by use of Lemma \ref{BRWCondMin}, we get   
\begin{equation}
  \lim_{t \to \infty} e^{I(x,a)t}\P\left( \mathbf{I} \leq \Gamma_{+}[t,z]  \right) = \lim_{t \to \infty} c_{\mathbf{I}} e^{I(x,a)t-\kappa \psi'(\kappa) s   }   e^{\kappa z}  =  C_{\mathbf{I}} \, e^{\kappa z} . 
\end{equation} 
Thus to compute $\lim_{t \to \infty}  \bar{\mathrm{Pr}}_{\eqref{eq-normalized-prob-L-I}}(t,z)  $, it is sufficient to study  
\begin{align}
 & \lim_{t \to \infty} \P \left(   W_{\infty}(\theta) /\mathcal{C}_{t} \geq  c_{\theta}   y \, e^{\psi'(\kappa) s} , \Gamma_{+}[t,z] - \mathbf{I}\in [0,1)\mid \mathbf{I} \leq \Gamma_{+}[t,z] \right) \\
  &=   \lim_{t \to \infty} \P \left(  e^{\mathbf{I}} W_{\infty}(\theta) /\mathcal{C}_{t} \geq  c_{\theta}  y e^{z} e^{\mathbf{I}-\Gamma_+[t,z]} , \Gamma_{+}[t,z] - \mathbf{I}\in [0,1)\mid \mathbf{I} \leq \Gamma_{+}[t,z] \right) \\
  &=    \int_{0}^{1}  \kappa \P \left(  \mathcal{Z} /\mathcal{C}\geq  c_{\theta}   y \, e^{z-h}  \right) e^{-\kappa h}  \dif h . \label{eq-apro-W-limit-C}
\end{align}
Above, in the last equality we have used Lemma \ref{BRWCondMin} again: Conditionally on $ \mathbf{I} \leq \Gamma_{+}[t,z]$, $( e^{\mathbf{I}} W_{\infty}(\theta) , \mathcal{C}_{t}, \Gamma_+[t,z]- \mathbf{I}) $ converges weakly to $(\mathcal{Z}, \mathcal{C},\mathcal{E})$ as $t \to \infty$, where  
 $\mathcal{C}= \exp (  \frac{\mathcal{N}^2}{2} \frac{p}{1-p})$ with $\mathcal{N}$ following a standard Gaussian distribution, $\mathcal{E}$ has exponential distribution with mean $\kappa^{-1}$ and $(\mathcal{Z},\mathcal{N},\mathcal{E})$ are independent. Now we can conclude that 
\begin{align}
  \lim_{t \to \infty}  \bar{\mathrm{Pr}}_{\eqref{eq-normalized-prob-L-I}}(t,z)  
  & =  \kappa c_{\mathbf{I}}   \int_{0}^{1}   \P \left(\mathcal{Z}/{\mathcal{C}} \geq  c_{\theta}    y \, e^{z-h}  \right) e^{\kappa (z-h) }  \dif h \\
  & =  \kappa c_{\mathbf{I}} \int_{e^{z-1}}^{e^{z}}   \P \left(\mathcal{Z}/{\mathcal{C}} \geq  c_{\theta}    y \, \gamma  \right) \gamma^{\kappa-1} \dif \gamma .\label{eq-apro-W-limit}
\end{align}
Notice that  by Lemma \ref{BRWCondMin} we have $C_{\mathbf{I}} \mathbf{E}[\mathcal{Z}^{\kappa}]= C_{W_{\infty}(\theta)}$. We thus obtain that 
\begin{equation}
  \kappa c_{\mathbf{I}}  \sum_{|z| \leq R}  \int_{e^{z-1}}^{e^{z}}   \P \left( {\mathcal{Z}}/{\mathcal{C}} \geq  c_{\theta}   y  \gamma  \right)  \gamma^{\kappa-1} \dif \gamma   
     \overset{R \to \infty}{\longrightarrow}     \frac{ c_{\mathbf{I}}}{(c_{\theta} y )^{\kappa}} \E \left[ \mathcal{C}^{-\kappa} \mathcal{Z}^{\kappa} \right] = \frac{   C_{W_{\infty}(\theta)} }{(c_{\theta} y )^{\kappa}} \E\left[ \mathcal{C}^{-\kappa}\right] . 
\end{equation} 
We compute that $ \E\left[ \mathcal{C}^{-\kappa}\right]=\E[ \exp(-  \frac{\kappa p}{2(1-p)}  \mathcal{N}^2) ]= \sqrt{\frac{1-p}{1+(\kappa-1)p}} $.
So letting $R \to \infty$ in \eqref{eq-apro-0}, by combining all computations, we arrive at:
\begin{align}
  \E \left[ \mathcal{C}^{-\kappa} \right]\frac{   C_{W_{\infty}(\theta)} }{(c_{\theta} y )^{\kappa}}  -   \delta 
  & \leq    \liminf_{t \to \infty}   e^{I(x,a)t}\P \left(  \L_{t}  (x t) \geq \frac{y}{\sqrt{t}} e^{at}  \right) \\
    & \leq 
  \limsup_{t \to \infty}      e^{I(x,a)t}\P \left(  \L_{t}  (x t) \geq \frac{y}{\sqrt{t}} e^{at}  \right)  \leq  \frac{   C_{W_{\infty}(\theta)} }{(c_{\theta} y )^{\kappa}}   \E \left[ \mathcal{C}^{-\kappa} \right]   +   \delta.
\end{align} 
By sending $\delta \downarrow 0$, we get  Theorem \ref{thm-LDP}.  Now it remains to prove the claim \eqref{eq-level-set-min}. 

 Let us fix an arbitrary small $\epsilon \in (0,1)$ and an arbitrary  small $\delta>0$. Let 
 \begin{equation}
  E^{(0)}_{\ell,\epsilon} := \left\{  \L^{>\ell}_{t}  (x t) \leq {\epsilon \, ye^{at}}/{\sqrt{t}}  ,  {W^{>\ell}_{\infty}(\theta)}/{C_{t}(z)} \leq \epsilon c_{\theta}    y \, e^{\psi'(\kappa) s}  \right\} .
 \end{equation}
Since $C_{t}(z) \geq 1$, it follows from Lemma \ref{lem-good-event} and Lemma  \ref{lem-truncated-level-set} (i) that  for sufficiently large $L$ and  $\ell$ (depending on $\epsilon ,\delta, R$) and for sufficiently large $t$, the following holds
\begin{equation}
     e^{I(x,a)t}  \,  \P \left(  \left( G^{1}_{L} \cap E^{(0)}_{\ell,\epsilon}\right)^{c} , 
   \mathbf{I}  \in  \Gamma[t,z] \right) < \delta , \,\forall \,|z| \leq R.
\end{equation} 
Let $E_{\ell, \epsilon}:=  \{|C_{t}(z)/\mathcal{C}_{t}-1|\leq \epsilon\}\cap E^{(0)}_{\ell,\epsilon}$. 
Comparing the definition of $\mathcal{C}_{t}$ with that of $C_{t}(z)$ and applying Lemma \ref{lem-truncated-level-set} (iii) we get that for sufficiently large $t$, 
\begin{equation}
  e^{I(x,a)t}  \,  \P \left(   \left( G^{1}_{L} \cap E_{\ell,\epsilon}\right)^{c}, 
\mathbf{I}  \in  \Gamma[t,z]  \right) < 2 \delta , \,\forall \,|z| \leq R.
\end{equation} 
When $ E_{\ell,\epsilon}$ occurs, if  $\L_{t}   (x t) \geq \frac{y\, e^{at}}{\sqrt{t}} $ but $  W_{\infty}(\theta) \leq c_{\theta} y \mathcal{C}_{t} e^{\psi'(\kappa) s} $, we must have  $ \L^{>\ell}_{t}  (x t) \geq \frac{(1-\epsilon) y}{\sqrt{t}} e^{at} $  and  $  W_{\infty}^{>\ell}(\theta) \leq c_{\theta} y (1+\epsilon) C_{t}(z) e^{\psi'(\kappa) s} $. 
Another case is similar. Therefore we conclude that 
\begin{align}
   & \mathrm{Er}_{\eqref{eq-level-set-min}}(t,z) \leq e^{I(x,a)t}  \,  \P \left(   \left( G^{1}_{L} \cap E_{\ell,\epsilon}\right)^{c}, 
  \mathbf{I}  \in  \Gamma[t,z]  \right) \\
 &   \quad   + e^{I(x,a)t}  \mathbf{P} \left(  \left\{ \L^{\leq \ell}_{t}  (x t) \geq \frac{(1-\epsilon) y}{\sqrt{t}} e^{at} \right\} \cap  \left\{   \frac{W_{\infty}^{\leq \ell} (\theta)}{C_{t}(z)} \leq c_{\theta} (1+\epsilon) y e^{\psi'(\kappa) s}  \right\} ,  \mathbf{I} \in  \Gamma[t,z]   \right)  \\
 & \quad +   e^{I(x,a)t}  \mathbf{P} \left(  \left\{ \L^{\leq \ell}_{t}  (x t) \leq \frac{y}{\sqrt{t}} e^{at} \right\} \cap \left\{       \frac{W_{\infty}^{\leq \ell} (\theta)}{C_{t}(z)} \geq c_{\theta} (1-2\epsilon) y e^{\psi'(\kappa) s}  \right\} ,  \mathbf{I} \in   \Gamma[t,z]   \right)   .  
\end{align}
Now by use of  Lemma \ref{lem-replace-level-by-mart},  taking the limit as $t \to \infty$ followed by  $\epsilon \downarrow 0$, we obtain that 
  $ \limsup_{t \to \infty} \mathrm{Er}_{\eqref{eq-level-set-min}}(t,z) \leq \delta $ uniformly  in $z \in [-R,R]$.  Since $\delta$ is arbitrary, the desired result \eqref{eq-level-set-min} follows. This completes the proof. 
\end{proof}

\subsection{Change of measure} In this section, we apply the change of measure technique  outlined in \S \ref{sec-spine} with the additive martingale with parameter $b=2/\theta$
\begin{equation}
  W_{r}(b) = \sum_{u \in \mathcal{N}_{r }} e^{ b X_{r}(u)- \left( \frac{b^2}{2}+1 \right)r  } = \sum_{u \in\mathcal{N}_{r }} e^{-\kappa V_{r}(u)} , \, r > 0 . 
\end{equation}
We begin by introducing a stopping time for  the BBM with a spine ($\{X_{r}(u): u\in \mathcal{N}_{r}\} , w_{r})_{r \geq 0}$ defined by  
 \begin{equation}
  \tau^{w}(z) := \inf\{ r > 0: V_{r}(w) = -\psi'(\kappa) s + z  \} . 
 \end{equation}
 Denote by $\hat{w}$ the spine particle at the stopping time $\tau^{w}(z)$ when it is finite, i.e., $\hat{w}:=w_{\tau^{w}(z)}$.

\begin{lemma}\label{lem-cop33}
For any $t>1$, $ r \geq 1$, $z \in \mathbb{R}$, and $A \in \mathcal{F}_{\infty}$,   
  \begin{equation}
    e^{ I(x,a)t} \P \left(  A ,  \tau(z)< r
    \right) =  e^{\kappa z}   (\mathbf{Q}^{b,*}_{\tau^{w}(z)}  \otimes \P) \left(    A ,  \tau^{w}(z)=\tau (z) < r  \right). 
  \end{equation}
\end{lemma}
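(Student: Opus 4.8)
The plan is to reduce the spine‑dependent, unbounded stopping time $\tau^{w}(z)$ to the bounded $(\mathcal{F}_{r})$‑stopping time $\tau_{r}(z):=\tau(z)\wedge r$ and then invoke the spinal change of measure of Lemma \ref{changeofp} with $\beta=b=2/\theta$, for which $W_{\rho}(b)=\sum_{u\in\mathcal{N}_{\rho}}e^{-\kappa V_{\rho}(u)}$.

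First I would record the arithmetic. Along any line of descent $\rho\mapsto X_{\rho}(\cdot)$, hence $\rho\mapsto V_{\rho}(\cdot)$, is continuous, so on $\{\tau(z)<r\}$ the first passage is attained exactly: $\min_{u\in\mathcal{N}_{\tau(z)}}V_{\tau(z)}(u)=-\psi'(\kappa)s+z$, and almost surely this minimum is realised at a unique particle $\hat u=\hat u(z)\in\mathcal{N}_{\tau(z)}$ (a fixed level is a.s.\ not hit simultaneously by two conditionally independent Brownian paths, and the branch times form a null set for this event). Using $bX_{\rho}(u)-(\tfrac{b^{2}}{2}+1)\rho=-\kappa V_{\rho}(u)$ together with $\kappa\psi'(\kappa)=\tfrac{2}{\theta^{2}}-1=\tfrac{b^{2}}{2}-1$ and $I(x,a)t=(\tfrac{b^{2}}{2}-1)s$, the additive‑martingale weight of $\hat u$ is
\[
e^{\,bX_{\tau(z)}(\hat u)-(\frac{b^{2}}{2}+1)\tau(z)}=e^{-\kappa V_{\tau(z)}(\hat u)}=e^{\kappa\psi'(\kappa)s-\kappa z}=e^{I(x,a)t-\kappa z}.
\]

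Next I would prove the event identity. Since the spine is one of the particles of the BBM, $\tau^{w}(z)\ge\tau(z)$ always; and on $\{\tau(z)<r\}$ (where $\tau_{r}(z)=\tau(z)$) one has $\tau^{w}(z)=\tau(z)$ precisely when the spine at time $\tau(z)$ coincides with $\hat u$, whence
\[
\{\tau^{w}(z)=\tau(z)<r\}=\{w_{\tau_{r}(z)}=\hat u\}\cap\{\tau(z)<r\},
\]
and both $\hat u$ and $\{\tau(z)<r\}$ are $\mathcal{F}_{\infty}$‑measurable. Now apply \eqref{eq-cop-22} with $\tau=\tau_{r}(z)$: on $\{\tau(z)<r\}$,
\[
(\mathbf{Q}^{b,*}_{\tau_{r}(z)}\otimes\P)\big(w_{\tau_{r}(z)}=\hat u\mid\mathcal{F}_{\infty}\big)=\frac{e^{-\kappa V_{\tau(z)}(\hat u)}}{W_{\tau_{r}(z)}(b)}=\frac{e^{I(x,a)t-\kappa z}}{W_{\tau_{r}(z)}(b)}.
\]
Multiplying by $\ind{A}\ind{\tau(z)<r}$, taking $\mathbf{Q}^{b,*}_{\tau_{r}(z)}\otimes\P$‑expectation, and using \eqref{eq-cop-11}, i.e.\ $\dif(\mathbf{Q}^{b,*}_{\tau_{r}(z)}\otimes\P)|_{\mathcal{F}_{\infty}}=W_{\tau_{r}(z)}(b)\dif\P$, to cancel the factor $W_{\tau_{r}(z)}(b)$, gives
\[
(\mathbf{Q}^{b,*}_{\tau_{r}(z)}\otimes\P)\big(A,\ \tau^{w}(z)=\tau(z)<r\big)=e^{I(x,a)t-\kappa z}\,\P\big(A,\ \tau(z)<r\big).
\]

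Finally I would remove the truncation. On the event $\{\tau^{w}(z)=\tau(z)<r\}$ the stopping times $\tau_{r}(z)$ and $\tau^{w}(z)$ coincide, so by the explicit density \eqref{eq-density-Qtau} — which on that event depends only on $\Xi_{\tau(z)}$ and $|w_{\tau(z)}|$ — the measures $\mathbf{Q}^{b,*}_{\tau_{r}(z)}\otimes\P$ and $\mathbf{Q}^{b,*}_{\tau^{w}(z)}\otimes\P$ restrict to the same measure on this event; hence the previous display holds with $\tau^{w}(z)$ in place of $\tau_{r}(z)$, and multiplying through by $e^{\kappa z}$ yields the claim. I expect the main obstacle to be precisely this bookkeeping — making rigorous that the genuinely spine‑dependent, unbounded $\tau^{w}(z)$ may be swapped for the bounded $(\mathcal{F}_{r})$‑stopping time $\tau_{r}(z)$ before Lemma \ref{changeofp} is applicable — together with the routine a.s.\ uniqueness of the first‑passage particle $\hat u$.
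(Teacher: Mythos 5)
Your proposal is correct and follows essentially the same route as the paper: both work with the bounded $(\mathcal{F}_r)$-stopping time $\tau_r(z)=\tau(z)\wedge r$, use the a.s.\ uniqueness of the first-passage particle together with \eqref{eq-cop-11}--\eqref{eq-cop-22} to turn $\P(A,\tau(z)<r)$ into a $\mathbf{Q}$-probability of $\{w$ hits the line first$\}$, and then invoke the agreement of the densities \eqref{eq-density-Qtau} for $\tau_r(z)$ and $\tau^w(z)$ on the event where they coincide. The only difference is cosmetic bookkeeping (you condition on $\mathcal{F}_\infty$ for the single particle $\hat u$, the paper sums \eqref{eq-cop-22} over $u\in\mathcal{N}_{\tau(z)}$), and your explicit treatment of the unboundedness of $\tau^w(z)$ matches the paper's opening identity.
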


\begin{proof} 
  Consider the measure  $ \mathbf{Q}^{b,*}_{\tau_{r}(z)} \otimes \P $ defined via the stopping time $\tau_{r}(z)=\tau(z) \wedge r$.  From the definition of the density \eqref{eq-density-Qtau}, we have
\begin{equation}
  (\mathbf{Q}^{b,*}_{\tau_{r}(z)} \otimes \P) ( \cdot \cap\{ \tau_{r}(z)=\tau^w(z)  \}) =  (\mathbf{Q}^{b,*}_{\tau^{w}(z)}\otimes \P) ( \cdot \cap \{ \tau_{r}(z)=\tau^w(z) \}) .
\end{equation}
Note that, almost surely, if $\tau(z)<\infty$, exactly one particle of the BBM will reach the line $-\psi'(\kappa)s + z$ at time $\tau(z)$. In other words, we have
 \begin{equation}
  \ind{\tau(z)< r} = \sum_{u \in \mathcal{N}_{\tau(z)}} \ind{\tau^u(z)=\tau(z)< r} \ \text{ a.s.}
 \end{equation}
 where $\tau^u(z):= \inf\{0 < t < d_{u}: V_{t}(u) =  -\psi'(s)+z\}$ (recall that $d_u$ denotes the death time of $u$). 
By using \eqref{eq-cop-11} from Lemma \ref{changeofp}, we deduce that 
\begin{equation}
  \P \left(  A ,  \tau(z)< r
  \right) =   \E_{ \mathbf{Q}^{b,*}_{\tau_{r}(z)} \otimes \P} \left[   \sum_{u \in \tau_{r}(z) }  \frac{e^{-\kappa V_{\tau_{r}(z) }(u)}}{W_{\tau_{r}(z)}(b)} e^{\kappa V_{\tau_{r}(z) }(u)} \ind{\tau^u(z)=\tau(z)< r } \ind{A}   \right]. 
\end{equation} 
Applying \eqref{eq-cop-22} of Lemma \ref{changeofp} to the inequality above,  we obtain that 
\begin{align}
\P \left(  A ,  \tau(z)< r
  \right) 
  & = \E_{ \mathbf{Q}^{b,*}_{\tau_{r}(z)} \otimes \P } \left[  e^{\kappa V_{\tau^{w}(z)}(w)}    \ind{ \tau^{w}(z)=\tau (z) < r}    \ind{A} \right] \\
  & = e^{-I(x,a)t + \kappa z}   (\mathbf{Q}^{b,*}_{\tau_{r}(z)} \otimes \P)   \left(    \tau^{w}(z)=\tau(z) < r, A  \right).
\end{align} 
Above, we have used the fact that $ V_{\tau^{w}(z)}(w)= - \psi'(\kappa)s + z$  and  $\kappa \psi'(\kappa) s=I(x,a)t$. This completes the proof.
\end{proof}

In the following, we will omit the parameter  $b$  and write \( \mathbf{Q}^{b,*}_{\tau^{w}(z)} \otimes \P \) as \( \mathbf{Q}^{*}_{\tau^{w}(z)} \otimes \P \), since only this parameter is used.
Note that on the event $\{\tau^{w}(z)=\tau(z)<t\}$ we have $\hat{u}= \hat{w}$. This observation leads us to  an immediate and useful corollary of Lemma \ref{lem-cop33}.

\begin{corollary}For $L \geq 1$,  define 
  \begin{equation} 
   G_{L}^{1}:= \{   |\tau(z) -s|\leq  L\sqrt{s} \} ,\ L \geq 1.
  \end{equation}
Then for any $t>0$, $z \in \mathbb{R}$, and $A \in \mathcal{F}_{\infty}$, we have   
\begin{equation}\label{eq-change-of-measure}
  e^{ I(x,a)t} \P \left( A, G^{1}_{L} , \mathbf{I} \in \Gamma[t,z]  
  \right) =  e^{\kappa z}   (\mathbf{Q}^{*}_{\tau^{w}(z)} \otimes \P)    \left(   A ,    G^{1}_{L}(w) , I_0^{w}   \right). 
\end{equation} 
where  $G^{1}_{L}(w):= \{ |\tau^w(z)-s| \leq L \sqrt{s} \}$ and 
\begin{align}
I_0^{w} &:= \{   \mathbf{I} + \psi'(\kappa) s \in  [z-1,z] \}\cap \{ \tau^{w}(z)=\tau(z) \} \\
&=  \{ \tau^{w}(z)=\tau(z) \} \cap \bigcap_{v \in \mathcal{B}(\hat{w})}\{ \mathbf{I}^{(v)} > V_{\tau^{w}(z)}(w)-1-V_{b_{v}}(w) \}  .
\end{align} 
\end{corollary}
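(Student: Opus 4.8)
The plan is to reduce the corollary to Lemma~\ref{lem-cop33} by inserting a deterministic cutoff time $r$ large enough to swallow the event $G^{1}_{L}$, and then to read off the identity once the change of measure has been applied. Fix $L\geq 1$ and $z\in\R$, and set $r=r(t,L):=s+L\sqrt{s}+1$. On $G^{1}_{L}$ one has $\tau(z)<\infty$ and $\tau(z)\leq s+L\sqrt{s}<r$, so $A\cap G^{1}_{L}\cap\{\mathbf{I}\in\Gamma[t,z]\}\subseteq\{\tau(z)<r\}$; moreover $\mathbf{I}$, $\tau(z)$ and $G^{1}_{L}$ are all $\mathcal{F}_{\infty}$-measurable. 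Hence Lemma~\ref{lem-cop33}, applied with the $\mathcal{F}_{\infty}$-event $A\cap G^{1}_{L}\cap\{\mathbf{I}\in\Gamma[t,z]\}$ in place of $A$ and with this $r$, gives
\[
 e^{I(x,a)t}\,\P\!\left(A,\,G^{1}_{L},\,\mathbf{I}\in\Gamma[t,z]\right)
 = e^{\kappa z}\,(\mathbf{Q}^{b,*}_{\tau^{w}(z)}\otimes\P)\!\left(A,\,G^{1}_{L},\,\mathbf{I}\in\Gamma[t,z],\,\tau^{w}(z)=\tau(z)<r\right),
\]
where the right-hand measure is meaningful because every event under consideration lies in $\{\tau^{w}(z)<\infty\}$, on which $\mathbf{Q}^{b,*}_{\tau^{w}(z)}\otimes\P$ coincides with its truncation at time $r$, exactly as in the proof of Lemma~\ref{lem-cop33}.

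Next I would simplify the right-hand side on the event $\{\tau^{w}(z)=\tau(z)\}$. There $G^{1}_{L}$ and $G^{1}_{L}(w)=\{|\tau^{w}(z)-s|\leq L\sqrt{s}\}$ are the same event, and on $G^{1}_{L}(w)$ the bound $\tau^{w}(z)=\tau(z)\leq s+L\sqrt{s}<r$ holds automatically, so the constraint ``$<r$'' may be removed. It then remains to identify $\{\mathbf{I}\in\Gamma[t,z]\}\cap\{\tau^{w}(z)=\tau(z)\}$ with $I_{0}^{w}$. On $\{\tau^{w}(z)=\tau(z)\}$ the spine particle $\hat{w}=w_{\tau^{w}(z)}$ is the first particle of the whole BBM to reach the line $-\psi'(\kappa)s+z$; by continuity of the trajectories this forces $V_{\tau^{w}(z)}(w)=-\psi'(\kappa)s+z$ and that no particle lies below that line before time $\tau(z)$, whence the minimum of $V$ along the spine up to $\tau^{w}(z)$ equals $V_{\tau^{w}(z)}(w)$. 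Decomposing the BBM along the spine at time $\tau^{w}(z)$ into the subtrees rooted at the elements of $\mathcal{B}(\hat{w})$ — with $b_{\hat{w}}:=\tau^{w}(z)$, each $v\in\mathcal{B}(\hat{w})$ rooted at $V$-value $V_{b_{v}}(w)$ and carrying an independent copy $\mathbf{I}^{(v)}$ of the transformed global minimum of a sub-BBM — and using $\mathbf{I}^{(\hat{w})}\leq 0$ to absorb the contribution of the spine path, one obtains $\mathbf{I}=\min_{v\in\mathcal{B}(\hat{w})}\{V_{b_{v}}(w)+\mathbf{I}^{(v)}\}$. Consequently
\[
 \{\mathbf{I}\in\Gamma[t,z]\}\cap\{\tau^{w}(z)=\tau(z)\}
 = \{\tau^{w}(z)=\tau(z)\}\cap\bigcap_{v\in\mathcal{B}(\hat{w})}\bigl\{\mathbf{I}^{(v)}>V_{\tau^{w}(z)}(w)-1-V_{b_{v}}(w)\bigr\}=I_{0}^{w},
\]
after the translation $V_{\tau^{w}(z)}(w)=-\psi'(\kappa)s+z$. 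Inserting this into the display above yields \eqref{eq-change-of-measure}, upon abbreviating $\mathbf{Q}^{b,*}$ by $\mathbf{Q}^{*}$.

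The bookkeeping with the cutoff $r$ and the measurability remarks are routine; the step needing care is the branching decomposition identifying $\{\mathbf{I}\in\Gamma[t,z]\}\cap\{\tau^{w}(z)=\tau(z)\}$ with the $\mathcal{B}(\hat{w})$-form of $I_{0}^{w}$. Here one must simultaneously use the genealogical splitting of the BBM at the first-passage particle $\hat{w}$ and the path-continuity fact that the first passage is to the exact level $-\psi'(\kappa)s+z$, so that the running minimum of $V$ up to $\tau(z)$ is exactly this level and has not already overshot it — this is precisely what makes the family of constraints indexed by $\mathcal{B}(\hat{w})$ capture the event $\{\mathbf{I}\in\Gamma[t,z]\}$ and nothing more.
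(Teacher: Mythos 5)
Your reduction is the route the paper itself intends: apply Lemma \ref{lem-cop33} to the $\mathcal{F}_{\infty}$-event $A\cap G^{1}_{L}\cap\{\mathbf{I}\in\Gamma[t,z]\}$ with a cutoff $r>s+L\sqrt{s}$, observe that $G^{1}_{L}$ already forces $\tau(z)<r$, and note that on $\{\tau^{w}(z)=\tau(z)\}$ the events $G^{1}_{L}$ and $G^{1}_{L}(w)$ coincide and the constraint ``$<r$'' is redundant. Those steps, and the measurability/bookkeeping remarks, are fine, as is the spine decomposition $\mathbf{I}=\min_{v\in\mathcal{B}(\hat{w})}\{V_{b_{v}}(w)+\mathbf{I}^{(v)}\}$ valid on $\{\tau^{w}(z)=\tau(z)\}$.

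The gap is exactly at the step you flag as delicate: the asserted identity $\{\mathbf{I}\in\Gamma[t,z]\}\cap\{\tau^{w}(z)=\tau(z)\}=I_{0}^{w}$ is false as written, and your own decomposition shows it. On $\{\tau^{w}(z)=\tau(z)<\infty\}$ the particle realizing $\tau(z)$ attains the level, so automatically $\mathbf{I}\le-\psi'(\kappa)s+z$; since $\Gamma[t,z]=-\psi'(\kappa)s+[z,z+1)$ is the unit interval sitting \emph{above} that level, the left-hand event degenerates to $\{\mathbf{I}=-\psi'(\kappa)s+z\}\cap\{\tau^{w}(z)=\tau(z)\}$. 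By contrast, using $V_{\tau^{w}(z)}(w)=-\psi'(\kappa)s+z$ and $\mathbf{I}=\min_{v\in\mathcal{B}(\hat{w})}\{V_{b_{v}}(w)+\mathbf{I}^{(v)}\}$, the intersection form of $I_{0}^{w}$ is $\{\mathbf{I}>-\psi'(\kappa)s+z-1\}\cap\{\tau^{w}(z)=\tau(z)\}$, i.e.\ the unit slab \emph{at and below} the level, $\mathbf{I}+\psi'(\kappa)s\in(z-1,z]$. So your ``consequently'' is a non sequitur: the two sets differ by a unit shift. To be fair, this off-by-one is already present in the statement itself (the two displayed forms of $I_{0}^{w}$ are not equal under the stated convention for $\Gamma[t,z]$, and the later assertion that $\mathbf{I}\in\Gamma[t,z]$ implies $\tau(z)<\infty$ likewise needs the slab to lie below the level), and the paper gives no proof of the corollary that would settle the convention. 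A correct write-up must resolve it rather than assert the equality: either take $I_{0}^{w}$ in its intersection form and replace the left-hand event by $\{\mathbf{I}+\psi'(\kappa)s\in(z-1,z]\}$, or keep $\Gamma[t,z]$ and apply Lemma \ref{lem-cop33} at the level $z+1$ (first passage of the upper edge of the slab), which changes the prefactor to $e^{\kappa(z+1)}$. With either choice, your first two steps plus the $\mathcal{B}(\hat{w})$-decomposition of $\mathbf{I}$ close the argument immediately; as written, the final set identity does not hold.
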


\subsection{Proofs of Lemmas in Section \ref{sharp-estimate-Sec1}}

To prove Lemma \ref{lem-good-event}, we need to use a well-known fact about the first passage times of Brownian motion:

\begin{lemma}\label{lem-BMhitting} 
Let   $(\beta_t= \nu t - \sigma B_{t}: t \geq 0)$ where $B_{t}$ is a standard Brownian motion and $\nu, \sigma>0$. Then the first passage time $ T(\alpha) :=\inf \left\{t>0 : \beta_t=\alpha\right\}$ for a fixed level $\alpha>0$ by $\beta_t$ is distributed according to an inverse Gaussian $(\frac{\alpha}{\nu}, \frac{\alpha^2}{\sigma^2})$. That is,  $T(\alpha)$ has  p.d.f. 
\begin{equation}
  f_{\frac{\alpha}{\nu},\frac{\alpha^2}{\sigma^2}}(T):=   \frac{\alpha}{  \sigma\sqrt{2 \pi T^3}} \exp \left(-\frac{(\alpha-\nu T)^2}{2 \sigma^2  T}\right) , \  T>0.
\end{equation}   
  \end{lemma}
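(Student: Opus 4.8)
The statement is a classical fact about drifted Brownian motion; the plan is to reduce it to the first passage of a driftless Brownian motion by scaling, and then recover the density through a Cameron--Martin change of measure combined with the reflection principle.

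First I would remove the noise scaling. Since $(-\sigma B_t)_{t\ge0}$ has the same law as $(\sigma B_t)_{t\ge0}$, the process $(\beta_t)_{t\ge 0}$ has the same law as $(\sigma W_t)_{t\ge 0}$, where $W_t:=B_t+\mu t$ and $\mu:=\nu/\sigma>0$. Consequently $T(\alpha)$ has the same law as $T_a:=\inf\{t>0:W_t=a\}$ with $a:=\alpha/\sigma>0$, so it suffices to prove that $T_a$ has density $t\mapsto \tfrac{a}{\sqrt{2\pi t^3}}\exp\big(-\tfrac{(a-\mu t)^2}{2t}\big)$ on $(0,\infty)$; substituting back $a=\alpha/\sigma$ and $\mu=\nu/\sigma$ turns $\tfrac{(a-\mu t)^2}{2t}$ into $\tfrac{(\alpha-\nu t)^2}{2\sigma^2 t}$ and $\tfrac{a}{\sqrt{2\pi t^3}}$ into $\tfrac{\alpha}{\sigma\sqrt{2\pi t^3}}$, which is exactly $f_{\alpha/\nu,\alpha^2/\sigma^2}$.

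For the driftless reduction, let $\mathbf{Q}$ be the probability with $\tfrac{\dif \mathbf{Q}}{\dif \mathbf{P}}\big|_{\mathcal{F}_t}=\exp(-\mu B_t-\tfrac12\mu^2 t)$. By Girsanov's theorem, $W=B+\mu\,\cdot$ is a standard Brownian motion under $\mathbf{Q}$, so by the reflection principle $T_a<\infty$ $\mathbf{Q}$-a.s.\ with $\mathbf{Q}(T_a\le t)=2\mathbf{Q}(W_t\ge a)$, i.e.\ $T_a$ has $\mathbf{Q}$-density $g(t)=\tfrac{a}{\sqrt{2\pi t^3}}e^{-a^2/(2t)}$. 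Rewriting the likelihood ratio in terms of $W$ gives $\tfrac{\dif \mathbf{P}}{\dif \mathbf{Q}}\big|_{\mathcal{F}_t}=\exp(\mu W_t-\tfrac12\mu^2 t)=:N_t$, a $\mathbf{Q}$-martingale whose stopping at $T_a$ is bounded by $e^{\mu a}$ (since $W_{\cdot\wedge T_a}\le a$), hence a uniformly integrable $\mathbf{Q}$-martingale with terminal value $N_{T_a}=\exp(\mu a-\tfrac12\mu^2 T_a)$ on $\{T_a<\infty\}$; in particular $\mathbf{P}(T_a<\infty)=\mathbf{E}_{\mathbf{Q}}[N_{T_a}]=1$. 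Then, for each $t>0$, using $\ind{T_a\le t}\in\mathcal{F}_{T_a\wedge t}$ and optional stopping,
\[
\mathbf{P}(T_a\le t)=\mathbf{E}_{\mathbf{Q}}\big[N_t\,\ind{T_a\le t}\big]=\mathbf{E}_{\mathbf{Q}}\big[N_{T_a}\,\ind{T_a\le t}\big]=\int_0^t e^{\mu a-\frac12\mu^2 s}\,g(s)\,\dif s,
\]
and completing the square via $\mu a-\tfrac12\mu^2 s-\tfrac{a^2}{2s}=-\tfrac{(a-\mu s)^2}{2s}$ shows that the density of $T_a$ equals $e^{\mu a-\frac12\mu^2 s}g(s)=\tfrac{a}{\sqrt{2\pi s^3}}\exp\big(-\tfrac{(a-\mu s)^2}{2s}\big)$, which is the claimed density for $T_a$; combined with the scaling reduction this proves the lemma.

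I do not expect a genuine obstacle: the only delicate points are the uniform integrability of $(N_{t\wedge T_a})_{t\ge0}$, needed so that the change of measure restricted to $\mathcal{F}_{T_a}$ is correctly evaluated as $N_{T_a}$, and the a.s.\ finiteness of $T_a$, which already holds for the driftless motion under $\mathbf{Q}$ by recurrence (and a fortiori under $\mathbf{P}$ since $\nu>0$). An equally short alternative avoids Girsanov: optional stopping applied to $\exp(\theta\beta_t-(\theta\nu+\tfrac12\theta^2\sigma^2)t)$ at $T(\alpha)$, with $\theta=\sigma^{-2}(\sqrt{\nu^2+2\sigma^2 q}-\nu)\ge0$ the nonnegative root of $\theta\nu+\tfrac12\theta^2\sigma^2=q$, gives $\mathbf{E}[e^{-qT(\alpha)}]=\exp(-\tfrac{\alpha}{\sigma^2}(\sqrt{\nu^2+2\sigma^2 q}-\nu))$, which one recognizes as the Laplace transform of the $\mathrm{IG}(\alpha/\nu,\alpha^2/\sigma^2)$ distribution; and since the statement is entirely classical, citing a standard reference for the first passage density of drifted Brownian motion would also be acceptable.
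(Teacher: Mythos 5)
Your proof is correct. Note that the paper itself offers no proof of this lemma at all: it is invoked as a classical fact about first passage times of drifted Brownian motion (the inverse Gaussian law), so there is nothing in the paper to compare against step by step. Your argument — rescaling to reduce to unit diffusion coefficient, obtaining the driftless first-passage density from the reflection principle under the Girsanov-tilted measure, and transferring back via the stopped likelihood-ratio martingale $N_{t\wedge T_a}\le e^{\mu a}$, whose boundedness justifies both the optional-stopping identity $\mathbf{P}(T_a\le t)=\mathbf{E}_{\mathbf{Q}}[N_{T_a}\ind{T_a\le t}]$ and $\mathbf{P}(T_a<\infty)=1$ — is a standard and complete derivation, and the completion of the square correctly yields the stated density $\frac{\alpha}{\sigma\sqrt{2\pi T^3}}\exp\bigl(-\frac{(\alpha-\nu T)^2}{2\sigma^2 T}\bigr)$. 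The Laplace-transform alternative you sketch is equally valid; either route (or a citation to a standard reference) would serve in place of the paper's bare statement.
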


\begin{proof}[Proof of Lemma \ref{lem-good-event}]
  We begin by asserting that
  \begin{equation}\label{eq-G-estimate}
  \mathrm{Pr}_{\eqref{eq-G-estimate}}(L) := \limsup_{t \to \infty}    \sup_{ |z|\leq R}    e^{I(x,a)t} \, \P \left(  \mathbf{I} \in \Gamma[t,z],  ( G^{1}_{L} )^{c} \right) \lesssim_{R,x,a}  e^{- \frac{\psi'(\kappa)^2 }{2 \psi''(\kappa) } L^2}. 
  \end{equation}  
  In fact, since  the event $   \mathbf{I} \in \Gamma[t,z]$ implies that $\tau(z)<\infty$, it follows from  Lemma \ref{lem-cop33}  that 
\begin{equation}
  e^{I(x,a)t} \, \P \left(  \mathbf{I} \in \Gamma[t,z],   |\tau(z) - s| > L\sqrt{s} \right)     \lesssim_{R}  (\mathbf{Q}^{*}_{\tau^{w}(z)} \otimes \P)  \left(   |\tau^w(z) -  s|> L \sqrt{s} , \tau^w(z) <\infty  \right) .
\end{equation}
Under  $\mathbf{Q}^{*}_{\tau^{w}(z)} \otimes \P$, $(V_{r}(w): r \leq \tau^w(z))$ is a Brownian motion with drift $-\psi'(\kappa)$ and diffusion coefficient $\psi''(\kappa)$  stopped upon hitting the line $-\psi'(\kappa) s +z$. By applying Lemma \ref{lem-BMhitting} we obtain   
\begin{align}
  & e^{I(x,a)t} \, \P \left(  \mathbf{I} \in \Gamma[t,z],   |\tau(z) - s| > L\sqrt{s} \right) \\
  & \lesssim_{R}   \int_{\mathbb{R}_+ \backslash [s-L\sqrt{s},s+L\sqrt{s}] }   \frac{\psi'(\kappa) s-z}{  \sqrt{2 \pi \psi''(\kappa) T^3}} \exp \left(-\frac{(\psi'(\kappa)s-z-\psi'(\kappa) T)^2}{2 \psi''(\kappa) T}\right) \dif T \\
   &\lesssim_{x,a}  \int_{(-\sqrt{s}, \infty) \backslash [-L,L] } \sqrt{s}\, \frac{\psi'(\kappa) s-z } {  \sqrt{ (s+\lambda \sqrt{s}) ^3}} \exp \left(- \frac{1}{2 \psi''(\kappa) } \frac{[\psi'(\kappa)\lambda   +z/\sqrt{s} ]^2}{1+ \lambda/ \sqrt{s}} \right)  \dif \lambda .
\end{align}  
Above, we have made a change of variable $T=s+\lambda \sqrt{s}$. 
By employing the dominated convergence theorem, we conclude that
\begin{align}
  \mathrm{Pr}_{\eqref{eq-G-estimate}}(L) \lesssim_{R, x,a}   \int_{L}^{\infty}  e^{ - \frac{\psi'(\kappa)^2 }{2 \psi''(\kappa) }\lambda^2  } \dif \lambda 
  \lesssim_{x,a} e^{- \frac{\psi'(\kappa)^2 }{2 \psi''(\kappa) }L^2}. 
\end{align} 
Next, by applying \eqref{eq-change-of-measure} and  observing that $\hat{u}=\hat{w}$ on $G^{1}_{L}(w) \cap I_{0}^{w}$, we have 
\begin{equation}
  e^{I(x,a)t} \, \P \left(    \mathbf{I} \in \Gamma[t,z],   G^{1}_{L} \backslash G^{2}_{\ell,L} \right) \leq  e^{\kappa R} ( \mathbf{Q}^{*}_{\tau^{w}(z)} \otimes \P ) \left(    G^{1}_{L}(w) \backslash G^{2}_{\ell, L}(w)   \right), \label{eq-G-estimate-2}
\end{equation}
where $G_{\ell, L}^{2}(w)  :=  \left\{  | \mathcal{B}^{\leq \ell}(\hat{w}) |   \leq L  \right\}  \cap  \left\{ \forall \, r \in [\tau^{w}(z) -\ell, \tau^{w}(z)] ,  | V_{r}(w) - V_{\tau^{w}(z)}(w)| \leq L \right\} $. First, observe that  for a fixed $\ell$, the law of the random variable $\max\{ | V_{r}(w) - V_{\tau^{w}(z)}(w)| : r \in [\tau^{w}(z) -\ell, \tau^{w}(z)] \} $ does not depend on $t,z$ and it is tight. Second, conditioned on $\tau^{w}(z)$ and $\{V_{r}(w): r \leq \tau^{w}(z)\}$, times of branching events along the spine   
$(b_{v}: v \in \mathcal{B}(\hat{w})\backslash\{\hat{w}\})$ form a Poisson process with rate Poisson process with rate $2$  on $[0,\tau^{w}(z)]$. Thus the   number of branching points in the time interval  $[\tau^{w}(z)-\ell,\tau^{w}(z)]$ does not depend on $t$ or $z$ and is also tight. Hence, for every $\ell \geq 1$, uniformly in $z$, we have
\begin{equation}
   \lim_{L \to \infty}\limsup_{t \to \infty}     \sup_{ |z| \leq R}   (\mathbf{Q}^{*}_{\tau^{w}(z)} \otimes \P)  \left(    G^{1}_{L}(w) \backslash G^{2}_{\ell, L}(w)   \right) =0. 
\end{equation}   
This, together with \eqref{eq-G-estimate} and \eqref{eq-G-estimate-2}, implies the desired result.  
\end{proof}

\begin{proof}[Proof of Lemma \ref{lem-truncated-level-set}] 
  We  provide the proof of \eqref{eq-distant-relative-level},   \eqref{eq-sstar-tauz-1} and \eqref{eq-distant-relative-max} here. A similar argument can be applied to prove \eqref{eq-distant-relative-mart} and \eqref{eq-distant-relative-min} (alternatively, see \cite[Lemmas 3.7 and 3.3]{CDM24} for a proof in the context of branching random walks). 

  \underline{Proof of \eqref{eq-distant-relative-level}.} Thanks to Lemma \ref{lem-good-event}, it is sufficient  to show that for each fixed $L \geq 1 $, uniformly in $|z| \leq R$,  we have 
 \begin{equation}
 \lim_{\ell \to \infty} \limsup_{t \to \infty} e^{I(x,a)t} \, \P \left(   \L_{t}^{ > \ell} (x t) \geq \frac{\epsilon}{\sqrt{t}} e^{at}  , \mathbf{I}  \in  \Gamma[t,z] , G_{L}^{1} \right)   =0. 
 \end{equation} 
 Applying \eqref{eq-change-of-measure} with $A=\{\L_{t}^{ > \ell} (x t) \geq \frac{\epsilon}{\sqrt{t}} e^{at} \}$, we only need to prove that 
 \begin{align}
   \lim_{\ell \to \infty} \limsup_{t \to \infty}     ( \mathbf{Q}^{*}_{\tau^w(z)} \otimes\P)  \left(     \L_{t}^{> \ell}  (x t) \geq \frac{\epsilon}{\sqrt{t}} e^{at} , I_{0}^{w},  G^{1}_{L}(w) \right)   = 0 . 
 \end{align}
On the event $ I_{0}^{w} \cap G^{1}_{L}(w) $, since $\hat{u}=\hat{w}$,  we can rewrite  $\L_{t}^{> \ell}  (x t) $ as $ \sum_{v  \in \mathcal{B}^{>\ell}(\hat{w})} \L^{(v)}_{t-b_{v}}(xt- X_{b_{v}}(\hat{w}))  $. So it suffices to show that for any fixed   $L\geq 1$,   uniformly in $|z| \leq R$,
 \begin{equation}\label{eq-distant-relative-2}
   \lim_{\ell \to \infty}\limsup_{t \to \infty}  \underbrace{  ( \mathbf{Q}^{*}_{\tau^w(z)} \otimes\P) 
    \left( \sum_{v  \in \mathcal{B}^{>\ell}(\hat{w})} \L^{(v)}_{t-b_{v}}(xt- X_{b_{v}}(w))   \geq \frac{\epsilon}{\sqrt{t}} e^{at} , I_{0}^{w},  G^{1}_{L}(w) \right) }_{ Q_{\eqref{eq-distant-relative-2}}(t,\ell) }  = 0 .
 \end{equation}

 We claim that on $ I_{0}^{w} \cap G^{1}_{L}(w) $,   $ xt-X_{r}(w) > 0$   for all $r \leq \tau^w(z)$.  In fact, by definition  for every $r \leq \tau^w(z)$, we have $V_{r}(w) \geq - \psi'(\kappa)s+z$, which implies    $\theta X_{r}(w) \leq (\frac{\theta^2}{2}+1)r+\psi'(\kappa)s-z  \leq 2s+O(\sqrt{s})+R$. 
   Thus, we get  $X_{r}(w) \leq bs+O(\sqrt{s})+O(1)< xt$.
 Next, let  $\mathcal{G}^{*}_{\tau^w(z)}:= \sigma( V_{r}(w): r \leq  \tau^w(z),  b_{v} ,  v \in \mathcal{B}(\hat{w}))$ denote the $\sigma$-field that contains the information about the movement and branching times of the spine particles up to time  $\tau^w(z)$.  
 Conditionally on  $\mathcal{G}^{*}_{\tau^w(z)}$, by using the branching property, the many-to-one formula, and the Gaussian tail inequality that $\P(N(0,1) >y) \leq \frac{1}{y\sqrt{2\pi} }e^{-y^2/2}, \forall\,y >0$,  we obtain,
  on the event  $G^{1}_{L}(w) \in \mathcal{G}^{*}_{\tau^w(z)}$, 
 \begin{align}
  & \E_{\Q^{*}_{\tau^w(z)} \otimes \P } \left(      \sum_{v  \in \mathcal{B}^{>\ell}(\hat{w})} \L^{(v)}_{t-b_{v}}(xt- X_{b_{v}}(w))   \mid  \mathcal{G}^{*}_{\tau^w(z)}  \right) \\
  &\lesssim   \sum_{ b_{v} \leq \tau^w(z) - \ell} \,   \frac{\sqrt{t-b_{v}}}{xt-X_{b_{v}}(w) }  \exp \left\{ (t-b_{v})-\frac{[xt-\mathsf{L}_{t}(b_{v})+\mathsf{L}_{t}(b_{v})- X_{b_{v}}(w)]^2}{2(t-b_{v})} \right\} \\
  & \lesssim_{x,a} \frac{1}{\sqrt{t}} e^{at}  \sum_{ b_{v} \leq \tau^w(z) - \ell} \, \exp \left\{ -\frac{xt-\mathsf{L}_{t}(b_{v})}{t-b_{v}}[\mathsf{L}_{t}(b_{v})-X_{b_{v}}(w)] \right\} \label{eq-distant-relative-232} .
 \end{align}
 Above in the last inequality, we have   used the fact that   (see  \eqref{eq-AHScurve-2} for the property of $\mathsf{F}_{t}$) 
 \begin{equation}
  t-b_{v} - \frac{[xt-\mathsf{L}_{t}(b_{v})]^2}{2(t-b_{v})} \leq  (t-b_{v}) - \frac{[xt-\mathsf{F}_{t}(b_{v})]^2}{2(t-b_{v})} = at, 
 \end{equation}
as $\mathsf{L}_{t}(r) \leq \mathsf{F}_{t}(r) \ll x t $ for all $r \leq \tau^w(z) \leq s + L \sqrt{s}$. 

We denote the probability in \eqref{eq-distant-relative-2} by $Q_{\eqref{eq-distant-relative-2}}(t,\ell)$.
 By \eqref{eq-distant-relative-232} and  applying Markov's inequality,  we obtain, 
 \begin{align}
   Q_{\eqref{eq-distant-relative-2}}(t,\ell)    \lesssim_{x,a} \frac{1}{\epsilon} \, \E_{\mathbf{Q}^{*}_{\tau^w(z)}\otimes\P } \left[  \min\left\{ \epsilon , \sum_{ b_{v} \leq \tau^w(z) - \ell} \, e^{ -\frac{xt-\mathsf{L}_{t}(b_{v})}{t-b_{v}}[\mathsf{L}_{t}(b_{v})-X_{b_{v}}(w)] } \right\}  \ind{  G^{1}_{L}(w) } \right].
 \end{align}

 We choose an arbitrary constant $c_0=c_0(x,a)>0$ satisfying  $0<c_0< \psi'(\kappa)$. Define  
 \begin{equation}
   G^{4}_{\ell}(w):= \{ \forall r \in (\ell, \tau^w(z)], V_{\tau^w(z)-r}(w) -V_{\tau^w(z)}(w) \geq c_0 r   \} . 
 \end{equation}
 See also Figure \ref{fig-BMhit}.   
 We claim that there is   deterministic sequence  $\{\delta(\ell): \ell \geq 1\}$ with $\delta(\ell) \to 0$ as $\ell \to \infty$ such that  
 \begin{equation}\label{eq-path-of-spine-G-4}
  (\mathbf{Q}^{*}_{\tau^w(z)} \otimes \P) \left(  G_{L}^{1}(w) \backslash  G_{\ell}^{4} (w) \right)  \leq \delta(\ell)  . 
 \end{equation}  
Then   on $G_{L}^{4}(w)$ for any $r \in [0,\tau^w(z)-\ell)$ we have $V_{r}(w) +\psi'(\kappa)s   \geq  c_0 [\tau^w(z) - r]+z$. Noting that $ \theta[\mathsf{L}_{t}(r) - X_{r}(w)]= V_{r}(w)+\psi'(\kappa)s$, this yields $  \mathsf{L}_{t}(r) - X_{r}(w) \geq [c_0 (\tau^w(z) - r)+z]/\theta$. Besides, for large $t$, we have  
 $\frac{xt-\mathsf{L}_{t}(r)}{t-r} \geq \frac{\theta}{2}$ for every $r \leq s+L \sqrt{s}$. See Figure  \ref{fig-FL}. 
 Thus  we conclude  
 \begin{align} 
  & Q_{\eqref{eq-distant-relative-2}} (t,\ell) 
      \lesssim_{x,a} (\mathbf{Q}^{*}_{\tau^w(z)} \otimes \P) (G_{L}^{1}(w) \backslash  G_{\ell}^4 (w)  )  \\
    & \qquad \qquad  \qquad   + \frac{1}{\epsilon}\E_{  \mathbf{Q}^{*}_{\tau^w(z)} \otimes \P } \left[   \sum_{ b_{v} \leq \tau^w(z) - \ell} \, e^{ - \frac{xt-\mathsf{L}_{t}(b_{v})}{\theta (t-b_{v})} [c_0(\tau^w(z)-b_{v})+z] }    \ind{ G_{L}^{1}(w) \cap  G^{4}_{\ell}(w) } \right] \\
 &\leq   \delta(\ell) + \frac{e^{-z/2}}{\epsilon} \E_{ \mathbf{Q}^{*}_{\tau^w(z)} \otimes \P } \left[   \sum_{ b_{v} \leq \tau^w(z) - \ell} \, e^{ -c_0\, (\tau^w(z)-b_{v})/2 } \right] \lesssim_{\epsilon}  \delta(\ell) + e^{R/2} \int_{\ell}^{\infty} e^{-c_0   y/2} \dif y .
 \end{align}
 Above,   we have used  that  conditioned on $\tau^w(z)$,  $(b_{v}: v \in \mathcal{B}(\hat{w})\backslash \{\hat{w}\})$ forms a Poisson process with rate $2$ on the interval $[0,\tau^w(z)]$. 
 Now we can deduce \eqref{eq-distant-relative-2}  and hence   \eqref{eq-distant-relative-level}.

 \begin{figure}[t]
  \includegraphics*[width=\textwidth]{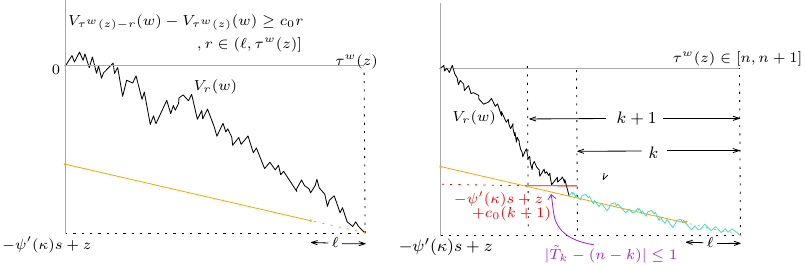}
  \caption{
    Left: Event $ G^{4}_{\ell}(w)$. Right: Event $ G^{1}_{L}(w) \backslash G^{4}_{\ell}(w)$. }\label{fig-BMhit}
\end{figure} 

It remains to show the claim \eqref{eq-path-of-spine-G-4}.  In fact, if $G_{L}^{1}(w) \backslash  G_{\ell}^{4} (w)$ happens, then there must exist  some  $k\geq \ell$ such that the process $\{V_{r}(w)\}_{r \geq 0}$ hits the step function $r \mapsto V_{\tau^w(z)}(w) + c_0 \lceil \tau^w(z)- r \rceil$ during the time interval $ [\tau^w(z)-k-1,\tau^w(z)-k]$. See Figure \ref{fig-BMhit}. Thus the first passaging time at level $V_{\tau^w(z)}(w) + c_0(k+1)$ by the process $\{V_{r}(w)\}$, say $T^{*}_{k} =  T^{V}(-\psi'(\kappa)s+z+c_0(k+1))$,
  belongs  to $[\tau^w(z)-k-1,\tau^w(z)-k]$.   
 So if addition $\tau^w(z) \in [n,n+1]$, we have  $|T^{*}_{k} - (n-k)|\leq 1$ and $V_{n}- V_{T^{*}_{k}} \geq - c_0(k+1)$. 
In conclusion,  we can  bound $ (\mathbf{Q}^{*}_{\tau^w(z)} \otimes \P) (G_{L}^{1}(w) \backslash  G_{\ell}^4 (w)  )$ above by 
\begin{equation}
  \sum_{\substack{|n-s|\leq L \sqrt{s} \\  \ell \leq k <n}} (\mathbf{Q}^{*}_{\tau^w(z)} \otimes \P) \left(  |T^{*}_{k} - (n-k)|\leq 1, V_{n}(w)- V_{T^{*}_{k}}(w) \geq - c_0(k+1) ,\tau^w(z) \in [n,n+1] \right) .
\end{equation}
Let  $(W,\P)$ denote a Brownian motion with drift $\psi'(\kappa)$ and  diffusion coefficient $\psi''(\kappa)$, and 
let $T^{W}_{k}$ be the first passaging time at level $ -\psi'(\kappa)s + z + c_0(k+1)$.  
Since under $\mathbf{Q}^{*}_{\tau^w(z)} \otimes \P$,  $(V_{r}(w): r \leq \tau^w(z))$ has the same law as $W $  stopped upon hitting the line  $-\psi'(\kappa)s+z$, by using  the strong Markov property, we get  
  \begin{align}
   &  (\mathbf{Q}^{*}_{\tau^w(z)} \otimes \P) (G_{L}^{1}(w) \backslash  G_{\ell}^4 (w)  ) \\ 
  & \leq   \sum_{\ell \leq k  }  \sum_{\substack{k+1 \leq n  \\|n-s|\leq L \sqrt{s}}}  \mathbf{P}\left( |T^{W}_{k} -(n-k)| \leq 1 \right) \max_{|r| \leq 1} \P\left(  W_{k+r} \geq - c_0(k+1)  \right) .
  \end{align} 
By applying the Gaussian tail inequality, we deduce that there exists a constant $c_{1}=c_{1}(x,a)>0$ such that $\max_{|r| \leq 1} \P\left(  W_{k+r} \geq - c_0(k+1)  \right) \lesssim e^{- c_{1} k} $.  On the other hand, notice that 
\begin{equation} 
  \sum_{ k+1 \leq n   }  \mathbf{P}\left( |T^{W}_{k} -(n-k)| \leq 1 \right) \leq   2\sum_{ j \geq 1  }  \mathbf{P}\left(  T^{W}_{k} \in [j-1,j] \right) \leq 2.
\end{equation}
Finally we obtain 
\begin{equation}
 (\mathbf{Q}^{*}_{\tau^w(z)} \otimes \P)(G_{L}^{1}(w) \backslash  G_{\ell}^4 (w)  )    \lesssim_{x,a} \sum_{\ell \leq k } e^{-c_{1} k } . 
\end{equation}
as desired. We now completes the proof of \eqref{eq-distant-relative-level}.
  


\underline{Proof of \eqref{eq-sstar-tauz-1}.} By the definition of $\s$ in \eqref{eq-def-ss}, on the event $\{\mathbf{I}  \in  \Gamma[t,z]\}$ we have  $\tau(z) \leq \s$.  So it suffices to show that  
\begin{equation}
  \limsup_{t \to \infty} e^{I(x,a)t} \P \left(   \s - \tau(z) > K ,   \mathbf{I}  \in  \Gamma[t,z]   \right)   \overset{K \to \infty}{\longrightarrow} 0. 
 \end{equation}  
 For  $\ell \geq 1$, define  $E^{1}_{\ell}:= \{ \exists v \in \mathcal{B}^{>\ell}(\hat{u}), V_{b_{v}}(v) + \mathbf{I}^{(v)} \geq -\psi'(s)+z  \}$. 
According to Lemma \ref{lem-good-event} and \eqref{eq-distant-relative-min}, for each $\delta>0$ there exists $\ell=\ell(\delta,R), L = L(\delta,R)$
such that  
\begin{equation}\label{eq-sstar-tauz-2}
  \limsup_{t \to \infty} e^{I(x,a)t} \P \left(  ( E^{1}_{\ell} \cap G_{\ell,L} )   ^{c} , \mathbf{I}  \in  \Gamma[t,z]  \right) <\delta.  
\end{equation}
Define   $E^{1}_{\ell}(w):= \{ \exists v \in \mathcal{B}^{>\ell}(\hat{w}), V_{b_{v}}(w) + \mathbf{I}^{(v)} \geq -\psi'(s)+z  \}$. By making change of measure and applying \eqref{eq-change-of-measure},  we have 
\begin{align}
  & e^{I(x,a)t} \P \left(   \s - \tau(z) > K , E^{1}_{\ell},  G_{\ell,L} , \mathbf{I}  \in  \Gamma[t,z] \right) \\
  & =   e^{\kappa z} ( \mathbf{Q}^{*}_{\tau^w(z)} \otimes \P )  \left(  \s - \tau^{w}(z) > K ,  E^{1}_{\ell}(w) , I_0^{w}, G^{w}_{\ell,L}  \right). 
\end{align}
 On the event $ E^{1}_{\ell}(w)$, whenever $\s- \tau^{w}(z)>K$, there must exist  $v \in \mathcal{B}^{\leq \ell}(\hat{w})$ such that $V_{b_{v}}(w)+\min\{V^{(v)}_{r}(u): u \in \mathcal{N}^{(v)}_{r}, r>K\}  <  V_{\tau^w(z)}(w)$. If in addition $G^{w}_{\ell,L}$ occurs, we have  $|\mathcal{B}^{\leq \ell}(\hat{w})|\leq L$. Noting that $V_{b_{v}}(w)- V_{\tau^w(z)}(w) \geq 0 $ and  using the branching property,   we conclude that 
\begin{align}
& ( \mathbf{Q}^{*}_{\tau^w(z)} \otimes \P ) \left(  \s - \tau^{w}(z) > K ,  E^{1}_{\ell}(w), G^{w}_{\ell,L}  \mid \mathcal{G}^{*}_{\tau^w(z)}   \right) \\
 & \quad \leq L\,  \P(  \min\{V_{r}(u): u \in \mathcal{N}_{r}, r>K\} \leq 0) \overset{K \to \infty}{\longrightarrow} 0 .
\end{align}
Above we have used \eqref{eq-HS-maximum} and the definition $V_{r}(u):= (\frac{\theta^2}{2}+1)r - X_{r}(u)$. Then the  bounded convergence theorem yields that uniformly in $|z| \leq R$
\begin{equation}
 \limsup_{K \to \infty} \limsup_{t \to \infty} e^{I(x,a)t} \P \left(   \s - \tau(z) > K , E^{1}_{\ell}, G_{\ell,L}  , \mathbf{I}  \in  \Gamma[t,z] \right) =0 . 
\end{equation} 
Combining this with \eqref{eq-sstar-tauz-2} we get  $ \limsup\limits_{K \to \infty} \limsup\limits_{t \to \infty} e^{I(x,a)t} \P \left(   \s - \tau(z) > K ,     \mathbf{I}  \in  \Gamma[t,z] \right) < \delta$. Since $\delta$ is arbitrary, the desired result \eqref{eq-sstar-tauz-1} follows.


 \underline{Proof of \eqref{eq-distant-relative-max}.}  Let us define \( Q_{\eqref{eq-relative-max-3}}(t, \ell) \) as the following probability:
 \begin{equation}\label{eq-relative-max-3}
    (\mathbf{Q}^{*}_{\tau^w(z)} \otimes \P)  \left(  \max_{v \in \mathcal{B}^{>\ell}(\hat{w})}   \{ X_{b_{v}}(w)+ M^{(v)}_{t-b_{v}}\} \geq X_{\tau^w(z)}(w)+ \sqrt{2}[t-\tau^{w}(z)], I_{0}^{w},  G^{1}_{L}(w) \right).
 \end{equation} 
 As in the proof of \eqref{eq-distant-relative-level},
 it suffices to show that for any fixed $R>0$, $L\geq 1$,  $ \lim\limits_{\ell \to \infty}\limsup\limits_{t \to \infty} Q_{\eqref{eq-relative-max-3}}(t,\ell) = 0  $  uniformly in $|z| \leq R$. We recall a well-known inequality bout the maximal position of the BBM (see \cite{Bramson83} or \cite{Aidekon13}):
 $\mathbf{P}(M_{t} \geq \sqrt{2} t + y ) \lesssim  e^{-\sqrt{2} y}$ for $y \geq 1$.   
 By applying the branching property, conditioned on $ \mathcal{G}^{*}_{\tau^w(z)} $,
  for each $v \in \mathcal{B}^{>\ell}(\hat{w})$,  we have 
  \begin{align}
   &  (\mathbf{Q}^{*}_{\tau^w(z)} \otimes \P) \left( X_{b_{v}}(w)+ M^{(v)}_{t-b_{v}} \geq X_{\tau^w(z)}(w)+ \sqrt{2}[t-\tau^{w}(z)]   \mid  \mathcal{G}^{*}_{\tau^w(z)}   \right)  \ind{G^{1}_{L}(w)} \\
    & \qquad \lesssim_{R} \exp \left\{ - \sqrt{2} [\mathsf{L}_{t}(b_{v}) -X_{b_{v}}(w)] \right\} \ind{G^{1}_{L}(w)}, 
   \end{align}  
   where we have used the facts  that  $ X_{\tau^w(z)}(w)= \mathsf{L}_{t}(\tau^w(z))- z/\theta$, and hence 
   \begin{align}
    & X_{\tau^w(z)}(w)+ \sqrt{2}[t-\tau^{w}(z)] - \mathsf{L}_{t}(b_{v}) + \mathsf{L}_{t}(b_{v}) -X_{b_{v}}(w) \\
     & = \sqrt{2}(t-b_{v})+  \frac{(\theta-\sqrt{2})^2}{2 \theta} \left( \tau^w(z)-b_{v} \right)  + \mathsf{L}_{t}(b_{v}) -X_{b_{v}}(w)- \frac{z}{\theta} . 
   \end{align}
 By using the the Markov inequality and the fact that   $\theta[\mathsf{L}_{t}(b_{v}) -X_{b_{v}}(w)] \geq c_0(\tau^w(z)-b_{v}) +z$ on $G^{4}_{L}(w)$ (see the argument below \eqref{eq-path-of-spine-G-4}), we finally get  
   \begin{align} 
     Q_{\eqref{eq-relative-max-3}}(t,\ell) &\lesssim_{x,a,R}  (\mathbf{Q}^{*}_{\tau^w(z)} \otimes \P)(G_{L}^{1}(w) \backslash  G_{\ell}^4 (w)  )     \\
    & \qquad  + \E_{\mathbf{Q}^{*}_{\tau^w(z)}\otimes \P } \bigg[   \sum_{ b_{v} \leq \tau^w(z) - \ell} \, e^{ - \sqrt{2}c_0(\tau^w(z)-b_{v}) /\theta }    \ind{ G_{L}^{1}(w) \cap  G^{4}_{\ell}(w) } \bigg] \\
    &    \leq  \delta(\ell) + 2 e^{R/2} \int_{\ell}^{\infty} e^{- \sqrt{2} c_0 y/{\theta} }\dif y
    \end{align}
   Again  we have used  that  conditioned on  $\tau^w(z)$,  $(b_{v}: v \in \mathcal{B}(\hat{w})\backslash \{\hat{w}\})$ forms a Poisson process with rate $2$ on the interval $[0,\tau^w(z)]$.  Then we get that $   \lim\limits_{\ell \to \infty}\limsup\limits_{t \to \infty} Q_{\eqref{eq-relative-max-3}}(t,\ell) = 0  $ and this
    completes the proof of    \eqref{eq-distant-relative-max}.
     \end{proof}

 \begin{proof}[Proof of Lemma \ref{lem-replace-level-by-mart}]
  Thanks to Lemma \ref{lem-good-event},    it suffices to show that for each fixed $L\geq 1$, uniformly in $|z| \leq R$ and $|\lambda_{i}-1| \leq \epsilon$, $i=1,2$,  
  \begin{align}\label{eq-reduce-replacing-0}
    \quad \lim_{\epsilon \downarrow 0}  \limsup_{t \to\infty}  e^{I(x,a)t}    \P \left(  \left\{  \L_{t}^{\leq \ell}  (x t) \geq \frac{\lambda_{1} e^{at} }{c_{\theta}\sqrt{t}}  \right\}  \sdif  \left\{  \frac{W^{\leq \ell}_{\infty}(\theta)}{C_{t}(z)} \geq  \lambda_{2} y e^{\psi'(\kappa) s} \right\},  
     \mathbf{I} \in  \Gamma[t,z],  G_{\ell, L}   \right)  = 0. \quad 
    \end{align} 
  To establish \eqref{eq-reduce-replacing-0}, by applying \eqref{eq-change-of-measure}  with  $A=\{\L_{t}^{\leq \ell}  (x t) \geq \frac{\lambda_{1} y}{c_{\theta} \sqrt{t}} e^{at}\} \sdif \{W^{\leq \ell}_{\infty}(\theta) /C_{t}(z)\geq  \lambda_{2} y e^{\psi'(\kappa) s}\} \cap G_{\ell, L}^{2}$,  
 it suffices to show that uniformly in $|z| \leq R$ and $|\lambda_{i}-1| \leq \epsilon$, $i=1,2$,  
\begin{equation}\label{eq-reduce-replacing}
  \lim_{\epsilon \downarrow 0} \limsup_{t \to\infty}      (\mathbf{Q}^{*}_{\tau^{w}(z)} \otimes \P) \left(   \left\{  \L_{t}^{\leq \ell}  (x t) \geq \frac{\lambda_{1} e^{at} }{c_{\theta}\sqrt{t}}  \right\} \sdif \left\{   \frac{W^{\leq \ell}_{\infty}(\theta)}{C^{w}_{t}(z)} \geq  \lambda_{2} y e^{\psi'(\kappa) s} \right\},   G^{w}_{\ell,L} , I_0^{w} \right)   = 0
\end{equation} 
Above we have used the fact  that  on the event $\{\tau(z)=\tau^w(z) <t\} $, we have $\hat{u}=\hat{w}$ and hence $C_{t}(z)=C_t^w(z)  $ and  $I_{0}^{w} \cap G_{\ell,L} = I_{0}^{w} \cap G_{\ell,L}^{w}$ where 
\begin{align}
  & C_t^w(z)  :=    \exp \left(\frac{p}{1-p}   \frac{(1-{\theta^2}/{2})^2    }{2\theta^2  } \frac{(\tau^{w}(z)-s)^2}{s}   \right) \ , \text{ and } \\ 
  & G^{w}_{\ell,L} := G^{1}_{L}(w) \cap  \left\{ | \mathcal{B}^{\leq \ell}(\hat{w}) |   \leq L  \right\}  \cap \left\{  \forall \, r \in [\tau^{w}(z) -\ell, \tau^{w}(z)] ,  | V_{r}(w) - V_{\tau^{w}(z)}(w)| \leq L \right\} . 
\end{align} 
Again, on the event $I_{0}^{w} \cap G_{\ell,L}^{w}$, since $\hat{u}=\hat{w}$, we have 
\begin{equation}
  \L_{t}^{\leq \ell}  (x t) =   
  \sum_{v \in \mathcal{B}^{\leq \ell}(\hat{w})}  \L_{t-b_{v}}^{(v)} (x t-X_{b_{v}}(\hat{w})   ) . 
\end{equation}

Let  $\mathcal{G}^{*}_{\tau^w(z)}:= \sigma( V_{r}(w): r \leq  \tau^w(z),  b_{v} ,  v \in \mathcal{B}(w))$ denote the $\sigma$ field that contains the information about the movement and branching times of the spine particles up to time  $\tau^w(z)$. 
We assert that on the event $G^{w}_{\ell, L} \in \mathcal{G}^{*}_{\tau^w(z)}$,  
\begin{equation}
  \E_{ \mathbf{Q}^{*}_{\tau^w(z)} \otimes\P } \left[  \L_{t-b_{v}}^{(v)} (x t-X_{b_{v}}(\hat{w})  ) \mid \mathcal{G}^{*}_{\tau^w(z)}  \right]  =[ 1+ h_{b_{v}}(w)  ] \frac{1}{c_{\theta}C^{w}_{t}(z)  \sqrt{t}} e^{at} e^{V_{\tau^w(z)}(w) - V_{b_{v}}(w)}  e^{-z}  
\end{equation} 
where $\sup_{v \in \mathcal{B}^{\leq \ell}(\hat{w})}|h_{b_{v}}(w)|$ is   bounded above by a   deterministic $o(1)$ term. To show this, write  $Y_{b_{v}}(w)=x t-X_{b_{v}}(w) -\theta(t-b_{v})  $. 
We  compute that 
for each $v \in \mathcal{B}^{\leq \ell}(\hat{w})$,
\begin{align}
 & \E_{ \mathbf{Q}^{*}_{\tau^w(z)} \otimes\P} \left[  \L_{t-b_{v}}^{(v)} (x t-X_{b_{v}}(w)   ) \mid \mathcal{G}^{*}_{\tau^w(z)}  \right] \\
 &= \frac{[1+o(1)]}{\sqrt{2 \pi}} \frac{\sqrt{t-b_{v}}}{x t-X_{b_{v}}(w)} \exp \left\{  (t-b_{v}) - \frac{[xt- X_{b_{v}}(w)]^2}{2(t-b_{v})} \right\}  \\
 &= \frac{[1+o(1)]}{\sqrt{2 \pi}} \frac{\sqrt{t-b_{v}}}{x t-X_{b_{v}}(w)} \exp \left\{  (t-b_{v}) - \frac{\theta^2}{2}(t-b_{v}) - \theta Y_{b_{v}}(w) - \frac{Y^{2}_{b_{v}}(w)}{2(t-b_{v})} \right\}  .
\end{align}
From the relations  $at= (1- \frac{\theta^2}{2})(t-s)$, $xt-bs=\theta(t-s)$, $\theta b = 2$ and $V_{b_{v}}(w)= ( \frac{\theta^2}{2} + 1 ) b_{v}-\theta X_{b_{v}}(w) $, we can infer that  
\begin{align}
 (t-b_{v}) - \frac{\theta^2}{2}(t-b_{v}) - \theta Y_{b_{v}}(w)
& = at- (1-\frac{\theta^2}{2}) s- V_{b_{v}}(w)   \\
 & = at + V_{\tau^w(z)}(w) - V_{b_{v}}(w) - z . \label{eq-Y-bv-w-22}
\end{align}
Moreover, on the event $G^{w}_{\ell, L} $, from \eqref{eq-Y-bv-w-22}  we deduce that  
\begin{equation}\label{eq-Y-b-w} 
  \left|\theta Y_{b_{v}}(w) - (1-\frac{\theta^2}{2})(s-b_{v})\right| \leq  |V_{\tau^w(z)}(w) - V_{b_{v}}(w) |+|z|  \leq  \ell + R +L.
\end{equation}  
Since for each time point $b_{v} \in [\tau^w (z) -\ell,  \tau^w(z) ]$, we have 
$\left| |s-b_{v}| -  |\tau^{w}(z)-s| \right| \leq  \ell$. This implies that  $\frac{x t-X_{b_{v}}(w)}{(t-b_{v})} = \theta + \frac{Y_{b_{v}}(w)}{t-b_{v}}=\theta+o(1)$, and  
\begin{equation}
  \frac{Y_{b_{v}}^{2}(w)}{2(t-b_{v})} = \frac{(1-\frac{\theta^2}{2})^2 (\tau^w(z)-s)^2}{2 \theta^2(t-b_{v} )} + o(1) =  \frac{(1-\frac{\theta^2}{2})^2 p (\tau^w(z)-s)^2}{2 \theta^2 (1-p) s} + o(1)
\end{equation}
The assertion then follows from the computations above.

For simplicity, we define  $ \bar{\L}_{t-b_{v}}^{(v)} ( \cdot ) :=  { \L_{t-b_{v}}^{(v)} (\cdot )}/{  \mathbf{E}_{ \mathbf{Q}^{*}_{\tau^{w}(z)} \otimes \P} [\L_{t-b_{v}}^{(v)} (\cdot ) \mid \mathcal{G}^{*}_{\tau^w(z)} ]  }$. Then on the event $I_{0}^{w} \cap G_{\ell,L}^{w}$,  we can rewrite 
  \begin{equation}
  \left\{  \L_{t}^{\leq \ell}  (x t) \geq \frac{\lambda_{1} y}{c_{\theta}\sqrt{t}} e^{at}   \right\} = \bigg\{  \Sigma_{\eqref{eq-rewite-L}}(\ell) \geq  \lambda_{1} C^{w}_{t}(z) y e^{z}\bigg\} . 
\end{equation}
where 
\begin{align}
  \Sigma_{\eqref{eq-rewite-L}}(\ell,t,z):=   \sum_{v \in \mathcal{B}^{\leq \ell}(w)}  e^{V_{\tau^w(z)}(w) - V_{b_{v}}(w)}  [1+ h_{b_{v}}(w)] \bar{\L}_{t-b_{v}}^{(v)} (x t-X_{b_{v}}(w)   ) \label{eq-rewite-L}  
\end{align} 
By using the branching property and Lemma \ref{lem-enhanced-convergence}, for any fixed $\epsilon>0$, it holds that 
for every $v \in \mathcal{B}^{\leq \ell}(w)$, 
\begin{align}
&       (\mathbf{Q}^{*}_{\tau^{w}(z)} \otimes \P) \left(  [1+ h_{b_{v}}(w)] \bar{\L}_{t-b_{v}}^{(v)} (x t-X_{b_{v}}(w)) \notin [1-\epsilon,1+\epsilon]   W^{(v)}_{\infty}(\theta)    \mid  \mathcal{G}^{*}_{\tau^{w}(z)} \right)  \ind{G^{w}_{\ell,L}}   \\
  &\leq  \mathbf{P}\left(  \L_{t'} (\theta t'+ y')  / W_{\infty}(\theta) \notin [1-2\epsilon,1+2\epsilon]  \right) \bigg| _{ t'= t-b_{v}, y'=Y_{b_{v}}(w) } \ind{G^{w}_{\ell,L}} \overset{t \to \infty}{\longrightarrow} 0 .  \text{ a.s. }
\end{align}
Thus by employing the bounded convergence theorem, we obtain that  for any fixed  $\epsilon>0$, the probability  
\begin{equation}
 Q_{t}(\epsilon):=       (\mathbf{Q}^{*}_{\tau^{w}(z)} \otimes \P)  \left(  \exists \, v \in \mathcal{B}^{\leq \ell}(w), \text{ s.t. }  \bigg| [1+ h_{t_{i}}(w)] \frac{\bar{\L}_{t-t_i}^{(v)} (x t-X(v))}{W^{(v)}_{\infty}(\theta)}  -1 \bigg|>\epsilon  ; G^{w}_{\ell,L}\right) 
\end{equation}
converges to $0$ as $t \to \infty$. Again, on the event $I_{0}^{w} \cap G_{\ell,L}^{w}$,   since $\hat{u}=\hat{w}$ we can rewrite
  \begin{equation} 
   \left\{   \frac{ W^{\leq \ell}_{\infty}(\theta)}{C^{w}_{t}(z)} \geq  \lambda_{2} y  e^{\psi'(\kappa) s}   \right\}    =  \bigg\{    \Sigma_{\eqref{eq-rewite-W}}(\ell)  \geq  \lambda_{2} C^{w}_{t}(z)  y   e^{z}    \bigg\}  ,
\end{equation} 
where
\begin{equation}
  \Sigma_{\eqref{eq-rewite-W}}(\ell) :=   \sum_{v \in \mathcal{B}^{\leq \ell}(w)}   e^{V_{\tau^w(z)}(w) - V_{b_{v}}(w)}   W^{(v)}_{\infty}(\theta  )    \label{eq-rewite-W} 
\end{equation}
Note that when $  [1+ h_{t_{i}}(w)] \frac{\bar{\L}_{t-t_i}^{(v)} (x t-X(v))}{W^{(v)}_{\infty}(\theta)}  \in [1-\epsilon, 1+ \epsilon]$ holds for every $ v \in \mathcal{B}^{\leq \ell}(w)$,  we must have $
 \frac{   \Sigma_{\eqref{eq-rewite-L}}(\ell,t,z) }{   \Sigma_{\eqref{eq-rewite-W}}(\ell) } \in [1-\epsilon, 1+ \epsilon ]$.
Thus, if $ \Sigma_{\eqref{eq-rewite-W}}(\ell)/C_{t}^{w}(z) \geq \lambda_{2} ye^{z} $ but $    \Sigma_{\eqref{eq-rewite-L}}(\ell,t,z)/C_{t}^{w}(z)  \leq \lambda_{1} y e^{z}$, we have $ \lambda_{2} \leq   \frac{\lambda_{1}}{1-\epsilon}  $. If $ \Sigma_{\eqref{eq-rewite-W}}(\ell) /C_{t}^{w}(z)\leq \lambda_{2} ye^{z} $ but  $ \Sigma_{\eqref{eq-rewite-L}}(\ell,t,z) /C_{t}^{w}(z) \geq \lambda_{1} y e^{z}$ we have $   \frac{\lambda_{1}}{1+\epsilon}\leq \lambda_{2}   $. Finally  we conclude that for any $|\lambda_{i}-1|< \epsilon$, $i=1,2$,
\begin{align}
  &       (\mathbf{Q}^{*}_{\tau^{w}(z)} \otimes \P)   \left(    \left\{  \L_{t}^{\leq \ell}  (x t) \geq \frac{\lambda_{1} e^{at} }{c_{\theta}\sqrt{t}}  \right\} \sdif \left\{  \frac{W^{\leq \ell}_{\infty}(\theta)}{C^{w}_{t}(z)} \geq  \lambda_{2} y e^{\psi'(\kappa) s} \right\} ,   I_{0}^{w} , G^{w}_{\ell,L}   \right)  \\
  & \leq         (\mathbf{Q}^{*}_{\tau^{w}(z)} \otimes \P)   \left(  \frac{\Sigma_{\eqref{eq-rewite-W}}(\ell)}{ C_{t}^{w}(z)} \in    y e^{z}  \left[ \frac{1-\epsilon}{1+\epsilon}, \frac{1+\epsilon}{1-\epsilon} \right]  \right) + Q_{t}(\epsilon ). 
\end{align}  
Observe that, the distribution of $  \Sigma_{\eqref{eq-rewite-W}}(\ell):= \sum_{v \in \mathcal{B}^{\leq \ell}(w)}   e^{V_{\tau^w(z)}(w) - V_{b_{v}}(w)}  W^{(v)}_{\infty}$  does not depend on $\tau^w(z)$, $t$ or $z$ at all  
 (so it is independent of $C^{w}_t(z)$)
  and it  has a probabilistic density function since the martingale limit $W_{\infty}(\theta)$ does. (Indeed  \cite{Stigum66} demonstrated that for  the  martingale  of a nonextinct GW process
limit has a density; \cite[Proposition 3]{Neveu88} and \cite[Theorem 8]{Kyprianou04} further  showed that in fact $W_{\infty}(\theta)$ is the martingale limit for certain GW processes). 
Additionally, notice that $C^{w}_{t}(z)$ converges in distribution to the random variable $\mathcal{C}$ introduced in \eqref{eq-apro-W-limit-C} as $t \to \infty$,
because $\{ (\tau^w(z)-s)/\sqrt{s} ,\mathbf{Q}^{*}_{\tau^{w}(z)} \otimes \P \}$ converges in distribution to a centered Gaussian random variable with variance $\frac{\psi''(\kappa)}{\psi'(\kappa)^2}$  by Lemma \ref{lem-BMhitting}. (Indeed one can write down the density function of $(\tau^w(z)-s)/\sqrt{s}$ and see it converges to the density of that Gaussian distribution). Therefore as $t \to \infty$, we have 
 \begin{equation}
  (\mathbf{Q}^{*}_{\tau^{w}(z)} \otimes \P)   \left(  \frac{\Sigma_{\eqref{eq-rewite-W}}(\ell)}{ C_{t}^{w}(z)} \in    y e^{z}  \left[ \frac{1-\epsilon}{1+\epsilon}, \frac{1+\epsilon}{1-\epsilon} \right]  \right) \to \mathsf{P}  \left(  \frac { \Sigma(\ell)}{\mathcal{C}} \in    y e^{z}  \left[ \frac{1-\epsilon}{1+\epsilon}, \frac{1+\epsilon}{1-\epsilon} \right]  \right) .
 \end{equation}
where under $\mathsf{P}$, $ \Sigma(\ell)$ as the same distribution as $  \{   \Sigma_{\eqref{eq-rewite-W}}(\ell), \mathbf{Q}^{*}_{\tau^{w}(z)} \otimes \P  \} $ and $\mathcal{C}$
is a positive random variable independent to $\Sigma(\ell)$.
Now letting $\epsilon \downarrow 0$, since $\Sigma(\ell)/\mathcal{C}$ has a probability density function,
we obtain the desired convergence in \eqref{eq-reduce-replacing}. This completes the proof. 
\end{proof}

\subsection{Proofs of Theorem \ref{thm-CondOverlap} and \ref{thm-CondMaxs}}
 
\begin{proof}[Proof of Theorem \ref{thm-CondOverlap}] 
  
  \underline{Step 1.} 
  Fix $R>0$. We aim to show that uniformly in $|z| \leq R$, the following holds:
  \begin{equation}\label{eq-genealogy-0}
    \limsup_{t \to \infty}\P \left( \left| \mathscr{R}(u^{1}_{t},u^{2}_{t}) - \tau(z)  \right|> \ell
    \mid \L_{t}(xt) \geq \frac{1}{\sqrt{t}} e^{at}, \mathbf{I}  \in  \Gamma[t,z]   \right)    \overset{\ell \to \infty}{\longrightarrow} 0. 
  \end{equation} 
Note that  if $\mathscr{R}(u^{1}_{t},u^{2}_{t}) \leq  \tau(z)- \ell$, then there must exist $v \in \mathcal{B}^{>\ell}(\hat{u}) $ satisfying  $ v \prec u^{i}_{t}$ for at least one $i \in \{1,2\}$. So we first demonstrate that, uniformly in $|z| \leq R$, 
 \begin{align}
  & \limsup_{t \to \infty} \P\left( \exists \,  v \in \mathcal{B}^{>\ell}(\hat{u}) \text{ s.t. } v \prec u^{1}_{t}
   \mid \L_{t}(xt) \geq \frac{1}{\sqrt{t}} e^{at}, \mathbf{I}  \in  \Gamma[t,z]   \right)  \\ 
   & =  \limsup_{t \to \infty} \E\left(  \frac{ \L_{t}^{> \ell}(xt) }{\L_{t}(xt) }
   \mid \L_{t}(xt) \geq \frac{1}{\sqrt{t}} e^{at}, \mathbf{I}  \in  \Gamma[t,z]   \right)   \overset{\ell \to \infty}{\longrightarrow} 0 . \label{eq-genealogy-1}
 \end{align}
 By applying  \eqref{eq-apro-W-limit} and Lemma \ref{lem-truncated-level-set} (i),  we have, for any fixed $\epsilon > 0$ 
\begin{align}
  & \limsup_{t \to \infty} \P\left(  \frac{ \L_{t}^{> \ell}(xt) }{\L_{t}(xt) } > \epsilon 
  \mid \L_{t}(xt) \geq \frac{1}{\sqrt{t}} e^{at}, \mathbf{I}  \in  \Gamma[t,z]   \right)  \\
 & \leq \limsup_{t \to \infty} \frac{ \P\left(   \L_{t}^{> \ell}(xt)  > \frac{\epsilon}{\sqrt{t}} e^{at} ,   \mathbf{I}  \in  \Gamma[t,z]   \right)}{\P\left(   \L_{t}(xt) \geq \frac{1}{\sqrt{t}} e^{at}, \mathbf{I}  \in  \Gamma[t,z]   \right)} \\
  & \lesssim_{x,a}  \limsup_{t \to \infty} e^{I(x,a)t}\P\left(   \L_{t}^{> \ell}(xt)  > \frac{\epsilon}{\sqrt{t}} e^{at} ,   \mathbf{I}  \in  \Gamma[t,z]   \right)  \overset{\ell \to \infty}{\longrightarrow} 0 . 
\end{align}  
Then  the claim \eqref{eq-genealogy-1} follows from the inequality $\E( Y ) \leq \P(Y > \epsilon) + \epsilon$ for any random variable $Y$ taking values in $[0,1]$.  

Next, we show that fixed $\ell>0$  and $R>0$, uniformly in   $|z| \leq R$, the following holds:
\begin{equation}\label{eq-genealogy-2}
  \limsup_{t \to \infty} \P\left( \mathscr{R}(u^{1}_{t},u^{2}_{t}) \geq \tau(z) -\ell + 2 r  
  \mid \L_{t}(xt) \geq \frac{1}{\sqrt{t}} e^{at}, \mathbf{I}  \in  \Gamma[t,z]   \right)  \overset{ r \to \infty}{\longrightarrow} 0 . 
\end{equation}
Notice that  on the event $\{ \exists v^{i} \in \mathcal{B}^{\leq \ell}(\hat{u}) \text{ s.t. } v^{i} \prec u^{i}_{t} ,i=1,2  \}$, the event $ \{\mathscr{R}(u^{1}_{t},u^{2}_{t}) \geq \tau(z) -\ell + 2 r \}$ occurs if and only if there is $v \in \mathcal{B}^{\leq \ell}(\hat{u})$ and $v' \in \mathcal{N}^{(v)}_{r}$ such that both $u^{1}_{t} $ and $ u^{2}_{t} $ are descendants of $v'$, provided that $r>\ell$. 
 By use of  \eqref{eq-apro-W-limit},  we obtain  
\begin{align}
  & \P\left( 
    \begin{array}{l} 
      \exists v^{i} \in \mathcal{B}^{\leq \ell}(\hat{u}) \text{ s.t. } v^{i} \prec u^{i}_{t}, \  \forall i \\
       \mathscr{R}(u^{1}_{t},u^{2}_{t})  \geq \tau(z) -\ell +2 r  
    \end{array} 
  \mid \L_{t}(xt) \geq \frac{1}{\sqrt{t}} e^{at}, \mathbf{I}  \in  \Gamma[t,z]   \right) \\
  & \lesssim_{x,a}  e^{I(x,a)t} \E\left( \sum_{v \in \mathcal{B}^{\leq \ell}(\hat{u})}  \sum_{v' \in \mathcal{N}^{(v)}_{r} }   \frac{\L^{(v')}_{t-b_{v}-r}(xt- X_{b_{v}+r}(v'))^2 }{\L_{t} (xt)^2 }   \ind{\L_{t}(xt) \geq \frac{1}{\sqrt{t}} e^{at}, \mathbf{I}  \in  \Gamma[t,z]  } \right) \\
  & \leq  e^{\kappa z} \E_{\mathbf{Q}^{*}_{\tau^{w}(z)}\otimes \P} \left( \sum_{v \in \mathcal{B}^{\leq \ell}(w)}  \sum_{v' \in \mathcal{N}^{(v)}_{r} }   \frac{\L^{(v')}_{t-b_{v}-r}(xt- X_{b_{v}+r}(v'))^2 }{\L_{t} (xt)^2 }   \ind{G_{\ell,L}^{w}}    \right) +    e^{I(x,a)t} \mathbf{P} (G_{\ell,L}^{c} ). 
\end{align}  where in the last step we have applied  \eqref{eq-change-of-measure}. By use of the branching property, we have  
\begin{equation}
   \E_{\mathbf{Q}^{*}_{\tau^{w}(z)} \otimes \P} \left(   \sum_{v' \in \mathcal{N}^{(v)}_{r} }   \frac{\L^{(v')}_{t-b_{v}-r}(xt- X_{b_{v}+r}(v'))^2 }{\L_{t} (xt)^2 } \mid   \mathcal{G}^{*}_{\tau^w(z)} \right)  \leq   \mathtt{OL}_{t-b_{v}} \left( r, \frac{xt- X_{b_{v}}(w)}{t-b_{v}} \right). 
\end{equation}
 We have showed in \eqref{eq-Y-b-w}  that, on the event $G^{w}_{\ell,L}$, there holds $ \frac{xt- X_{b_{v}}(w)}{t-b_{v}}=  \theta + \frac{Y_{b_{v}}(w)}{t-b_{v}}$ with $|Y_{b_{v}}(w) | \lesssim L\sqrt{s}$. Thus we conclude that 
\begin{align}
& \P\left( \begin{array}{l} 
  \exists v^{i} \in \mathcal{B}^{\leq \ell}(\hat{u}) \text{ s.t. } v^{i} \prec u^{i}_{t}, \  \forall i \\
  \mathscr{R}(u^{1}_{t},u^{2}_{t})  \geq \tau(z) -\ell + 2 r
    \end{array} 
\mid \L_{t}(xt) \geq \frac{1}{\sqrt{t}} e^{at}, \mathbf{I}  \in  \Gamma[t,z]  \right)\\
& \lesssim_{x,a}   L e^{R}  \times \sup_{|s'-s|\leq L \sqrt{s}} \sup_{|\delta|\leq t^{-1/3}}
\mathtt{OL}_{t-s'} \left( r, \theta+ \delta \right)  +    e^{I(x,a)t} \mathbf{P} (G_{\ell,L}^{c} ). 
\end{align}
Letting $t \to \infty$ first, then $r \to \infty$ and finally $L \to \infty$, and applying Lemmas \ref{lem-overlap} and   \ref{lem-good-event} we obtain the desired result in \eqref{eq-genealogy-2}.  
Combining  \eqref{eq-genealogy-2} and \eqref{eq-genealogy-1}, we  conclude \eqref{eq-genealogy-0}.

\underline{Step 2.} We now establish \eqref{eq-overleap-tight}. Fix $R>0$, $|z| \leq R$. Combining \eqref{eq-apro-W-limit} with Lemma \ref{lem-truncated-level-set} (iii), we deduce that uniformly in $|z| \leq R$, 
\begin{equation}\label{eq-sstar-tauz-0}
 \limsup_{t \to \infty} \P \left(  |\s - \tau(z)| > K \mid\L_{t}(xt) \geq \frac{1}{\sqrt{t}} e^{at}, \mathbf{I}  \in  \Gamma[t,z]  \right)   \overset{K \to \infty}{\longrightarrow} 0. 
\end{equation}
Using \eqref{eq-genealogy-0} and the inequality
$  \left|  \mathscr{R}(u^{1}_{t},u^{2}_{t}) - \s \right| \leq   \left|  \mathscr{R}(u^{1}_{t},u^{2}_{t}) - \tau(z) \right|+  \left| \tau(z)- \s \right|$,  we obtain  that   uniformly in $|z| \leq R $, 
\begin{equation}\label{eq-overlap-argmin-1}
 \lim_{K \to \infty}\limsup_{t \to \infty} \P \left(  \left|  \mathscr{R}(u^{1}_{t},u^{2}_{t}) - \s \right| >  K \mid\L_{t}(xt) \geq \frac{1}{\sqrt{t}} e^{at}, \mathbf{I}  \in  \Gamma[t,z]  \right) = 0.
\end{equation}
Additionally,  
thanks to Proposition \ref{prop-up-to-constant}, we have 
\begin{equation}\label{eq-level-set-vs-inf}
  \lim_{R \to \infty}\limsup_{t \to \infty} \P \left(  \left| \mathbf{I} + \psi'(\kappa)s \right| > R   \mid \L_{t}(xt) \geq \frac{1}{\sqrt{t}} e^{at}  \right) = 0.
\end{equation}
Combining \eqref{eq-level-set-vs-inf} with \eqref{eq-overlap-argmin-1}, we  deduce that the conditioned law  $ ( \mathscr{R}(u^{1}_{t},u^{2}_{t}) - \s \mid   \L_{t}(xt) \geq \frac{1}{\sqrt{t}} e^{at} )_{t>0}$ is tight.  
Furthermore,  by using  \eqref{eq-level-set-vs-inf},  \eqref{eq-apro-W-limit} and similar reasoning, 
it remains to  show that   uniformly in $z \in [-R,R]$,
\begin{equation} 
  \limsup_{t \to \infty} e^{I(x,a)t} \P \left(    \left| X_{ \mathscr{R}(u^{1}_{t}, u^{2}_{t})}(u^{1}_{t}) -  \mathsf{F}_{t}( \mathscr{R}(u^{1}_{t}, u^{2}_{t})) \right|   > K ,   \mathbf{I}  \in  \Gamma[t,z]  \right)    \overset{K \to \infty}{\longrightarrow} 0. 
\end{equation}
Define $E^{2}_{\ell,r}  = \{ \exists v^{i} \in \mathcal{B}^{\leq \ell}(\hat{u}) \text{ s.t. }   v^{i} \prec u^{i}_{t} , \mathscr{R}(u^{1}_{t},u^{2}_{t}) \leq \tau(z) -\ell + r\}$. By  \eqref{eq-genealogy-1},   \eqref{eq-genealogy-2} and  Lemma \ref{lem-good-event}, for any $\delta>0$, there are large constants $\ell,r,L$ depending on $\delta$ and $R$ such that 
\begin{equation}
  \limsup_{t \to \infty} e^{I(x,a)t} \P \left(    (E^{2}_{\ell,r} \cap     G_{\ell,L})^{c}, \mathbf{I}  \in  \Gamma[t,z] \right) < \delta. 
\end{equation} 
By \eqref{eq-change-of-measure} and the fact that  $\hat{u}=\hat{w}$ on $I_0^{w} \cap G_{\ell,L}$, we have
\begin{equation}
  \begin{aligned} 
   &  \limsup_{t \to \infty} e^{I(x,a)t} \P \left(    \left| X_{ \mathscr{R}(u^{1}_{t}, u^{2}_{t})}(u^{1}_{t}) -  \mathsf{F}_{t}( \mathscr{R}(u^{1}_{t}, u^{2}_{t})) \right|   > K ,   \mathbf{I}  \in  \Gamma[t,z]   \right)  \\
    &\leq  \limsup_{t \to \infty}  e^{\kappa z} (\mathbf{Q}^{*}_{\tau^w(z)} \otimes \P)  \left(  \left| X_{ \mathscr{R}(u^{1}_{t}, u^{2}_{t})}(u^{1}_{t}) -  \mathsf{F}_{t}( \mathscr{R}(u^{1}_{t}, u^{2}_{t})) \right|   > K , E^{2}_{\ell,r}(w), G^{w}_{\ell,L}, I_0^{w} \right)   + \delta 
    \label{eq-desired-ineq-00}
   \end{aligned}
\end{equation}  
where $E^{2}_{\ell,r}(w) = \{ \exists v^{i} \in \mathcal{B}^{\leq \ell}(\hat{w}) \text{ s.t. }   v^{i} \prec u^{i}_{t} , \mathscr{R}(u^{1}_{t},u^{2}_{t}) \leq \tau^w(z) -\ell + r\}$. Notice that when $E^{2}_{\ell,r}(w) $ occurs,  we have 
\begin{equation}
 \left| X_{\mathscr{R}(u^{1}_{t} , u^{2}_{t} )}(u^{1}_{t})- X_{\tau^w(z)}(w) \right| \leq \sum_{v \in \mathcal{B}^{\leq \ell}(\hat{w})} M^{(v)}_{r} + | X_{b_{v}}(w)- X_{\tau^w(z)}(w)|.  \label{eq-X-R-F-122}
\end{equation}
From  definition  $\theta X_{\tau^w(z)}(w) = \left( 1+ \frac{\theta^2}{2} \right) \tau^w(z) - \left( 1- \frac{\theta^2}{2} \right)s - z$, it follows that  $X_{\tau^w(z)}(w) = \mathsf{L}_{t}(\tau^w(z)) - {z}/{\theta}$.  Hence on the event $E^{2}_{\ell,r}(w)\cap G^{w}_{\ell,L} $, we have 
\begin{align}
 &\left| X_{\tau^w(z)}(w) -     \mathsf{F}_{t}(\mathscr{R}(u^{1}_{t},u^{2}_{t}) ) \right| \\
 &  \leq  \frac{R}{\theta}+ \left| \mathsf{L}_{t}(\mathscr{R}(u^{1}_{t},u^{2}_{t}))-  \mathsf{F}_{t}(\mathscr{R}(u^{1}_{t},u^{2}_{t})) \right| +  \left| \mathsf{L}_{t}(\tau^w(z))  -  \mathsf{L}_{t}(\mathscr{R}(u^{1}_{t},u^{2}_{t}) ) \right|\\
 & \leq  \frac{R}{\theta} + \sup_{|y-s| \leq L \sqrt{t}} |\mathsf{L}_{t}(y)- \mathsf{F}_{t}(y)|  +   \frac{ | \tau^w(z)   -  \mathscr{R}(u^{1}_{t},u^{2}_{t})  |  }{\theta} \lesssim_{x,a} R+L^{2}+\ell+r . \label{eq-X-R-F-1}
\end{align}
Above, we have used the facts that $|\mathsf{L}_{t}(y)- \mathsf{F}_{t}(y)| \leq \mathsf{F}''(s) L^{2} t $ by Taylor's formula and $\mathsf{F}''(s)= \mathsf{f}''(p)/t$ by definition \eqref{AHScurve}. Finally, 
using \eqref{eq-X-R-F-1} and \eqref{eq-X-R-F-122}, we find that for  large  $K$, 
\begin{equation}
  (\mathbf{Q}^{*}_{\tau^w(z)} \otimes \P) \left(  \left| X_{ \mathscr{R}(u^{1}_{t}, u^{2}_{t})}(u^{1}_{t}) -  \mathsf{F}_{t}( \mathscr{R}(u^{1}_{t}, u^{2}_{t})) \right|   > K ,   E^{2}_{\ell,r}(w), G^{w}_{\ell,L}, I_0^{w} \right)   \leq     L \P  (  M_{r}  >  K/2  ). \label{eq-X-R-F-2}
\end{equation}
Letting $t \to \infty$ first, then $K \to \infty$ and finally $\delta \to 0$ in  \eqref{eq-desired-ineq-00}, we deduce the desired result.
The conclusion holds because for any fixed $r$, we have \( \lim_{K \to \infty} \P \left( M_{r} > K/2 \right) = 0 \). This completes the proof of \eqref{eq-overleap-tight}.

 \underline{Step 3.} 
 We now proceed to show \eqref{eq-overleap-conv}. We claim that  the conditional law  $  (  X_{\mathscr{R}(u^{1}_{t} , u^{2}_{t} )}(u^{1}_{t})- \mathsf{L}_{t}(\mathscr{R}(u^{1}_{t},u^{2}_{t})) \mid \L_{t}(xt) \geq \frac{1}{\sqrt{t}} e^{at}  )  $ is also tight. To verify this, simply replace the function  $\mathsf{F}_{t}$ in \eqref{eq-X-R-F-1} and \eqref{eq-X-R-F-2} with $\mathsf{L}_{t}$. Notice    the identity 
\begin{equation}
  \mathsf{L}_{t}(\mathscr{R}(u^{1}_{t},u^{2}_{t}))- b s  = \left( \frac{\theta}{2}+ \frac{1}{\theta} \right)  \left( \mathscr{R}(u^{1}_{t},u^{2}_{t}) - s  \right). 
\end{equation}
Thus we have only to study the conditional law $  (  \frac{\mathscr{R}(u^{1}_{t},u^{2}_{t})- s}{\sqrt{s}} \mid \L_{t}(xt) \geq \frac{1}{\sqrt{t}} e^{at} ) $. Given that \eqref{eq-overleap-tight} ensures the tightness of    $({\mathscr{R}(u^{1}_{t},u^{2}_{t})- \mathfrak{s}}  \mid \L_{t}(xt) \geq \frac{1}{\sqrt{t}} e^{at} ) $, the desired result \eqref{eq-overleap-conv} follows from the validity of the following assertion:
\begin{equation}\label{eq-cond-min-pt-clt}
   \left(  \sqrt{\frac{\psi'(\kappa)^2}{\psi''(\kappa)}}  \frac{\mathfrak{s}- s}{\sqrt{s}} \mid \L_{t}(xt) \geq \frac{1}{\sqrt{t}} e^{at} \right)  \Rightarrow \mathcal{G}:=   \sqrt{\frac{1-p}{1-p+\kappa p}} \mathcal{N} 
\end{equation} 
Here $\mathcal{N}$ is a standard Gaussian random variable. 
By using the same arguments in the proof of Theorem \ref{thm-LDP},     we  obtain that   
\begin{align} 
  & \lim_{t \to \infty}   \mathbf{P}  \left(  \sqrt{\frac{\psi'(\kappa)^2}{\psi''(\kappa)}}   \frac{\mathfrak{s}- s}{\sqrt{s}}   \in [A,B], \L_{t}(xt) \geq \frac{1}{\sqrt{t}} e^{at }  \right)\\
 & = \sum_{z\in \mathbb{Z}} \lim_{t \to \infty}  e^{I(x,a)t}\P\left(  \sqrt{\frac{\psi'(\kappa)^2}{\psi''(\kappa)}} \frac{\s-s}{  \sqrt{s}} \in [A,B],    W_{\infty}(\theta)/\mathcal{C}_{t} \geq c_{\theta} e^{\psi'(\kappa) s} ,  \mathbf{I}   \in  \Gamma[t,z]   \right)  
\end{align} 
Following the  computations that lead to \eqref{eq-apro-W-limit}, we find that the limit above equals 
\begin{align}
  \kappa c_{\mathbf{I}}  \sum_{z \in \mathbb{Z}}  \int_{e^{z}}^{e^{z+1}}   \P \left( {\mathcal{Z}}/{\mathcal{C}} \geq  c_{\theta}     \gamma , \mathcal{N} \in [A,B] \right)  \gamma^{\kappa-1} \dif \gamma =  \frac{   C_{W_{\infty}(\theta)} }{(c_{\theta} y )^{\kappa}} \E\left[ \mathcal{C}^{-\kappa}\ind{\mathcal{N} \in [A,B]}\right] ,   \label{eq-cond-sdis-clt}  
\end{align}
where recall that $\mathcal{N}$ is a standard Gaussian distribution and $\mathcal{C}=\exp(\frac{p}{2(1-p)}\mathcal{N}^2)$. Hence we conclude that $   \mathbf{P}  (  \sqrt{\frac{\psi'(\kappa)^2}{\psi''(\kappa)}}   \frac{\mathfrak{s}- s}{\sqrt{s}}   \in [A,B] \mid \L_{t}(xt) \geq \frac{1}{\sqrt{t}} e^{at }  ) = \E(\frac{\mathcal{C}^{-\kappa}}{\E [ \mathcal{C}^{-\kappa}  ]} \ind{\mathcal{N} \in [A,B]})$. This concludes the assertion \eqref{eq-cond-min-pt-clt} 
 and hence completes the proof of Theorem \ref{thm-CondOverlap}. 
\end{proof}

\begin{proof}[Proof of Theorem \ref{thm-CondMaxs}] Note that $ \mathsf{L}_{t} \left( \tau(z)  \right) + \sqrt{2}[t- \tau(z)] = vt +  \frac{(\theta- \sqrt{2})^2}{2 \theta} (\tau(z)-s) $.
  As a direct consequence of   \eqref{eq-sstar-tauz-0} and \eqref{eq-cond-min-pt-clt}   we have 
  \begin{equation}
   \mathbf{P}\left(   \frac{\mathsf{L}_{t} \left( \tau(z)  \right) + \sqrt{2}[t- \tau(z)] - vt}{\sqrt{s}} \in \cdot   \mid \L_{t}(xt) \geq \frac{1}{\sqrt{t}} e^{at}  , \mathbf{I}  \in  \Gamma[t,z]  \right) \Rightarrow   \frac{\sqrt{2}-\theta}{\sqrt{2}+ \theta}  \, \mathcal{G}
  \end{equation}
  as $t \to \infty$ for any fixed $z \in \mathbb{R}$. So it suffices to show that for each fixed $\epsilon>0$, 
\begin{equation}
  \lim_{t \to \infty}   \P \left(  \left| \frac{ M_{t}- \left[ \mathsf{L}_{t} \left( \tau(z)  \right)+ \sqrt{2}(t- \tau(z)) \right]}{\sqrt{s}}  \right| > \epsilon \mid  \L_{t}(xt) \geq \frac{1}{\sqrt{t}} e^{at} ,  \mathbf{I}  \in  \Gamma[t,z]   \right) =0. 
\end{equation}

Since $X_{\tau(z)}(\hat{u})= L_{t}(\tau(z))-  {z}/{\theta}$ when $\tau(z)<t$, 
by use of \eqref{eq-apro-W-limit}, 
Lemma \ref{lem-good-event} and \ref{lem-truncated-level-set} (iv),  and \eqref{eq-change-of-measure}, 
we have only to show that for each fixed $\ell \geq 1$ and $L\geq 1$
\begin{equation}
  \begin{aligned}
 &  \lim_{t \to \infty} e^{I(x,a)t} \P \left(  \left| \frac{ M_{t}- \left[ X_{\tau(z)}(\hat{u})+ \sqrt{2}(t- \tau(z)) \right]}{\sqrt{s}}  \right| > \epsilon , E^{3}_{\ell},  G_{\ell,L} ,  \mathbf{I}  \in  \Gamma[t,z]   \right)  \\
 & = \lim_{t \to \infty} e^{\kappa z} (\mathbf{Q}^{*}_{\tau^w(z)} \otimes \P) \left(  \left| \frac{ M_{t}- \left[ X_{\tau^{w}(z)}(w)+ \sqrt{2}(t- \tau^{w}(z)) \right]}{\sqrt{s}}  \right| > \epsilon ,E^{3}_{\ell}(w), G_{\ell, L}^{w}, I_0^{w} \right)  =0 \label{eq-max-clt-00}
  \end{aligned}   
\end{equation} 
  where   $E^{3}_{\ell}:= \{ \forall v \in \mathcal{B}^{>\ell}(\hat{u}) ,  X_{b_{v}}(\hat{u})+ M^{(v)}_{t-b_{v}}  \leq  X_{\tau_{t}(z)}(\hat{u}) + \sqrt{2}[t-\tau_{t}(z)]  \}$, and  $E^{3}_{\ell}(w):=  \{ \forall v \in \mathcal{B}^{>\ell}(w) ,  X_{b_{v}}(w)+ M^{(v)}_{t-b_{v}}  \leq  X_{\tau^{w}(z)}(w)+ \sqrt{2}[t-\tau^w(z)]  \}$.

Observing that $M_{t}  \geq X_{\tau^{w}(z)}(w) + M^{(\hat{w})}_{t-\tau^{w}(z) }$, by use of the branching property and  \eqref{eq-HS-maximum} we get that 
  \begin{align}
    & \lim_{t \to \infty}  (\mathbf{Q}^{*}_{\tau^w(z)} \otimes \P)  \left( M_{t} \leq    X_{\tau^{w}(z)}(w)+ \sqrt{2}(t- \tau^{w}(z))   - \epsilon \sqrt{s}     \mid \mathcal{G}^{*}_{\tau^w(z)} \right) \ind{ G_{\ell, L}^{w}} \\
    &\leq \lim_{t \to \infty}  \max_{|r -s| \leq L \sqrt{s}} \P( M_{t-r} \leq  \sqrt{2}(t-r)- \epsilon \sqrt{s} ) = 0.  \label{eq-max-clt-01}
  \end{align}  
Moreover, on the event $E^{3}_{\ell}(w)\cap G_{\ell,L}^{w}$, $ M_{t} \geq    X_{\tau^{w}(z)}(w)+ \sqrt{2}(t- \tau^{w}(z)) +  \epsilon \sqrt{s}  $ holds  means that  there exists $v \in \mathcal{B}^{\leq \ell}(w)$ such that $M^{(v)}_{t-b_{v}} \geq   \sqrt{2}(t- \tau^{w}(z)) +  \epsilon \sqrt{s}- L$. Thus 
  \begin{align}
    & \lim_{t \to \infty}  \mathbf{Q}^{*}_{\tau^w(z)} \left( M_{t} \geq    X_{\tau^{w}(z)}(w)+ \sqrt{2}(t- \tau^{w}(z))  + \epsilon \sqrt{s}     \mid \mathcal{G}^{*}_{\tau^w(z)} \right) \ind{ G_{\ell, L}^{w}} \\
    &\leq \lim_{t \to \infty} L  \max_{|r -s| \leq L \sqrt{s}} \P( M_{t-r-\ell} \geq  \sqrt{2}(t-r) + \epsilon \sqrt{s} - L ) = 0.  \label{eq-max-clt-02}
  \end{align} 
Applying the bounded convergence theorem,
  the desired result \eqref{eq-max-clt-00} then follows from \eqref{eq-max-clt-01} and  \eqref{eq-max-clt-02}. We now complete the proof.
\end{proof}


\appendix

\section{Proof of Lemmas in Section   \ref{ref-preliminary}}
  \label{app-A}

  \subsection{Proof of Lemma \ref{changeofp}} 
Assertion (i) follows directly from the description of the process under  $\mathbf{Q}^{\beta,*}_{\tau} \otimes \P$. To prove assertion (ii), note that  $\P= \P^{*}|_{\mathcal{F}_{\infty}}$ and 
\begin{align}
  & \frac{\dif (\mathbf{Q}^{\beta,*}_{\tau} \otimes \P)|_{\mathcal{F}_{\infty}} }{\dif \P^{*}|_{\mathcal{F}_{\infty} }}  = \mathbf{E}^{*} \left[  \frac{\dif (\mathbf{Q}^{\beta,*}_{\tau} \otimes \P)  }{\dif \P^{*}} \mid \mathcal{F}_{\infty}   \right] \\
  &= \sum_{u \in \mathcal{N}_{\tau}} e^{\beta X_{\tau}(u)-  (\frac{\beta^2}{2}+1)\tau } 2^{|u|} \mathbf{P}^{*} \left( w_{\tau}=u \mid \mathcal{F}_{\infty}   \right) =  \sum_{u \in \mathcal{N}_{\tau}} e^{\beta X_{\tau}(u)-  (\frac{\beta^2}{2}+1)\tau }   = W_{\tau}(\beta).
\end{align} 
Above we have used that $
  \P^{*}( w_{\tau}= u \mid \mathcal{F}_{\infty} ) = \ind{u \in \mathcal{N}_{\tau}} \frac{1}{2^{|u|}}$. 
Then  we get that  $     \dif (\Q^{\beta,{*}}_{\tau} \otimes \P ) |_{\mathcal{F}_{\infty}} =W_{\tau}(\beta) \dif \P $.   
For any $F \in \mathcal{F}_{\infty}$ and a label $u$, 
\begin{align}
&(\mathbf{Q}^{\beta,*}_{\tau} \otimes \P) \left(\left\{w_{\tau}=u\right\} \cap F \right)  = \mathbf{E}^{*} \left( \mathbf{1}_{\left\{w_{\tau}=u\right\}} \mathbf{1}_{\{F\}}   e^{\beta \Xi_{\tau}-  (\frac{\beta^2}{2}+1)\tau } 2^{|w_{\tau}| }  \right) \\
&= \mathbf{E}^{*} \left( \mathbf{1}_{\left\{w_{\tau}=u\right\}} \mathbf{1}_{\{F\}}   e^{\beta X_{\tau}(u)-  (\frac{\beta^2}{2}+1)\tau } 2^{|u| }  \right)  =\mathbf{E}\left[\mathbf{1}_{\{u \in \mathcal{N}_{\tau}\}} \mathbf{1}_{\{F\}}   e^{\beta X_{\tau}(u)-  (\frac{\beta^2}{2}+1)\tau }  \right] \\
& =\E_{\mathbb{Q}^{\beta,*}_{\tau} \otimes \P} \left[\mathbf{1}_{\{u \in \mathcal{N}_{\tau} \}} \mathbf{1}_{\{F\}} \frac{1}{W_{\tau}(\beta) }     e^{\beta X_{\tau}(u)-  (\frac{\beta^2}{2}+1)\tau }     \right].
\end{align}
This completes the proof.  

\subsection{Proof of Lemma \ref{martingale limit and minimum}} 
We only need to prove the continuity of $ C_{\eqref{martmin}}(\beta)$ for $ \beta \in (0, \sqrt{2})$  in assertion (ii). The remaining assertions  are addressed in \cite[Theorem 1.3, Lemmas 3.1, and 3.4]{CDM24}.

  As shown in  \cite[Lemma 3.1, (B.8), (B.14)]{CDM24}, inequality $\eqref{martmin} $ holds   provided that   $C_{\eqref{martmin}}(\beta) $ is greater than  $ C_o(\beta)$ defined  as follows.  
Define $\psi_{\beta}(\lambda):=\left( \frac{\beta^2}{2} \lambda -1\right)(\lambda-1)$.
 Let  $\lambda_o := \argmin \psi_{\beta}(\lambda) =\frac{1}{2}+\frac{1}{\beta^2}$  and note that $\psi_{\beta}(\lambda_o)=  -\frac{(2/\beta^2 -1)^2}{8/\beta^2}$. Then, with some absolute constant $K$, we set 
   \begin{equation}
   C_o(\beta):= K     \left( \frac{2}{\beta^2} \sum_{n ,k,j \geq 0} (n+1)^{\frac{2}{\beta^2}}  e^{t+n\psi_{\beta}(\lambda_o)} e^{ \psi_{\beta}(\lambda_o)  k }e^{ - \frac{  \beta^2 j }{4}   } \right)^{\frac{2}{\beta^2}}    \E [  W_{1}(\beta)^{\frac{2}{\beta^2}+\frac{1}{2}}      ]  \E  [ W_{\infty} (\beta)^{\frac{2}{\beta^2}-\frac{1}{2}}   ]   .
   \end{equation}  
 The dominated convergence theorem yields that 
   the series in the parentheses is a continuous function of  $\beta \in (0,\sqrt{2})$. 
   When $p \leq 1$,  $ \E  [ W_{\infty} (\beta)^{p}   ] \leq   \E  [ W_{\infty} (\beta)]=1$. When $\frac{2}{\beta^2}-\frac{1}{2} \in (1,2)$, it follows from \cite[Lemmas 3.2 and 4.2]{Liu00}  that,   
   \begin{equation}
     \E \left[ W_{\infty}(\beta)^{p}  \right] \leq \frac{ \E[W_{1}(\beta)^{p}] } {(1- e^{\psi_{\beta}(p)/(p-1)})^{p-1}}  \text{ for } p \in (1, \frac{2}{\beta^2})
   \end{equation}
   and if $2/\beta^2>2$,  we have 
   \begin{equation}
      \E \left[ W_{\infty}(\beta)^{p}  \right]    \leq \frac{ \E[W_{1}(\beta)^{p}] \E[ W_{\infty}(\beta)^{p-1}]}{(1- e^{\psi_{\beta}(p)})^{p-1}} \text{ for } p \in (2, \frac{2}{\beta^2}). 
   \end{equation}
   Using the many-to-one lemma, we can see that $\beta \mapsto \E[W_{1}(\beta)^{f(\beta)}]$ is bounded by a continuous function on $(0, \sqrt{2})$, provided $f:(0, \sqrt{2}) \to \mathbb{R}{+}$ is continuous. Hence, there exists a continuous function $C_{\eqref{martmin}}(\beta)$ on $(0, \sqrt{2})$ such that $C_{\eqref{martmin}}(\beta) \geq C_0(\beta)$. This completes the proof.

\subsection{Proof of Lemma \ref{lem-overlap}} 
 For simplicity we write $\hat{\beta}=\beta+\delta$.   Fix $r>0$. Notice that  $ \mathscr{R}(u^{1}_{t} , u^{2}_{t})  \geq r$ holds if and only if  $u^{1}_{t}$ and $u^{2}_{t}$ share a common   ancestor at time $r$. Thus   conditioned on $\mathcal{F}_{r}$, we have 
  \begin{equation}
    \P( \mathscr{R}(u^{1}_{t} , u^{2}_{t})  \geq r \mid \mathcal{F}_{r}) = \E \left(   \frac{ \sum_{v \in \mathcal{N}_{r} } \left[ \L^{(v)}_{t-r} (\hat{\beta} t-X_{r}(v)) \right]^2  }{ \left[ \sum_{v \in \mathcal{N}_{r} } \L^{(v)}_{t-r} (\hat{\beta} t-X_{r}(v))  \right]^2 }  \mid \mathcal{F}_{r}  \right).
  \end{equation} 
 Since $ \E  [ \L_{t-r}(\hat{\beta} t- X_{r}(v)) \mid \mathcal{F}_{r} ] / \E  [ \L_{t-r}(\hat{\beta} t)] = (1+o(1))  e^{\beta X_{r}(v)} $ as $t \to \infty$, by use of Lemma \ref{lem-enhanced-convergence} and the branching property,  we obtain that   
  \begin{equation}
   \lim_{t \to \infty }   \sup_{\delta \leq \delta_{t}} \left| \frac{ \L^{(v)}_{t-r} (\hat{\beta} t-X_{r}(v)) }{ \E \left[ \L_{t-r}(\hat{\beta} t) \right] }  -  e^{\beta X_{r}(v)} W^{(v)}_{\infty}(\beta) \right|=0, \ \forall\,  v \in \mathcal{N}_{r} , \ \P(\cdot | \mathcal{F}_{r}) \text{-a.s. } 
  \end{equation}
Recall that   $ W_{\infty}(\beta)= \sum_{v \in \mathcal{N}_{r}} e^{\beta X_{r}(v) - (\frac{\beta^2}{2}+1) r} W_{\infty}^{(v)}(\beta)  $. Now,  employing the bounded convergence theorem, we deduce that 
  \begin{equation}
   \lim_{t \to \infty}   \sup_{\delta \leq \delta_{t}}   \left|  \P( \mathscr{R}(u^{1}_{t} , u^{2}_{t})  \geq r | \mathcal{F}_{r}) - \E\left[ \sum_{v \in \mathcal{N}_{r} } \left( e^{\beta X_{r}(v) - (\frac{\beta^2}{2}+1) r} \frac{ W_{\infty}^{(v)}(\beta) }{W_{\infty}(\beta)} \right)^2 \big| \, \mathcal{F}_{r}  \right]  \right| = 0 \  \text{ a.s. } 
  \end{equation}
 The desired result \eqref{eq-ol-conv-1} then follows by using the bounded convergence theorem again. 

It remains to show that $   \mathtt{OL}(r; \beta) \to 0$ as $r \to \infty$.  
Fix an $\epsilon >0$. We say an individual $u \in \mathcal{N}_{t}$ with position $X_{t}(u) \geq \beta t$ is good,  if for any $t' \in [r,t]$,   $X_{t'}(u) \leq (\beta +\epsilon)t'$.  Let $\bar{\L}_{t,r}(\beta t)$ denote the number of good individuals at time $t$. And for each $ v \in \mathcal{N}_{r}$, let $\bar{\L}_{t,r}^{(v)}(\beta t) $ denote the number of descendants of $v$ at time $t$ that is good.  Then 
\begin{align}
& \P(  u^{i}_{t}  \text{ is not  good}\mid \mathcal{F}_{r}) =  \frac{\L_{t}(\beta t) - \bar{\L}^{(v)}_{r,t}(\beta t)  }{\L_{t}(\beta t) } , i=1,2 \quad \text{and } \\
& \P( \mathscr{R}(u^{1}_{t} , u^{2}_{t})  \geq r , u^{i}_{t}  \text{ is good}, i=1,2 \mid \mathcal{F}_{r})  \leq \sum_{v \in \mathcal{N}_{r}}  \frac{ \bar{\L}^{(v)}_{r,t}(\beta t)^2 }{\L_{t}(\beta t)^2 } . 
\end{align}
Thus we have, for any $\lambda \in (0,1)$
\begin{align}
 \P( \mathscr{R}(u^{1}_{t} , u^{2}_{t})  \geq r  )   
 & \leq  \P \left(   \L_{t}(\beta t) \leq \lambda  \E[  \L_{t}(\beta t)  ] \right)  +  2  \frac{\E [\L_{t}(\beta t) - \bar{\L}^{(v)}_{r,t}(\beta t) ] }{ \lambda \E[\L_{t}(\beta t) ]} \\
  & \quad + 
\E \left(  \sum_{v \in \mathcal{N}_{r}}  \frac{ \bar{\L}^{(v)}_{r,t}(\beta t)^2 }{ \lambda^2 [\E\L_{t}(\beta t)]^2 } \right) . \label{eq-overplap-scaling-1}
\end{align}
It then follows from \cite[Lemmas 2.3 and 2.4]{GKS18} respectively that for large $r$  and large $t$ depending on $r$,
\begin{equation}
\frac{\E [\L_{t}(\beta t) - \bar{\L}^{(v)}_{r,t}(\beta t) ] }{   \E[\L_{t}(\beta t) ]}  \leq \exp \left( - \frac{\epsilon^2}{4} r \right) \text{ and }    \E \left(  \sum_{v \in \mathcal{N}_{r}}  \frac{ \bar{\L}^{(v)}_{r,t}(\beta t)^2 }{  [\E\L_{t}(\beta t)]^2 } \right)  \leq \exp \left(-\frac{1-2 \beta^2}{4} r \right) .
\end{equation}
Applying these two inequalities along with \eqref{eq-typical-GKS}   to the inequality  \eqref{eq-overplap-scaling-1} yields that 
\begin{equation}
\limsup_{t \to \infty }  \P( \mathscr{R}(u^{1}_{t} , u^{2}_{t})  \geq r  )  \leq \P \left(  W_{\infty}(\beta) \leq \lambda \right) + \frac{1}{\lambda} e^{ - \frac{\epsilon^2}{4} r } + \frac{1}{\lambda^2} e^{-\frac{1-2 \beta^2}{4} r }.
\end{equation}
Letting $r \to \infty$ first and then $\lambda \downarrow 0$, since $\P\left( W_{\infty}(\beta) =0\right)=0$,
the desired result follows.


\section*{Acknowledgement}
The authors would like to thank Prof. Louigi Addario-Berry for an inspiring discussion. The research is supported by the National Key R\&D program of China No. 2022YFA1006500.


 \bibliographystyle{alpha}
 \bibliography{biblio}
 \end{document}